\definecolor{linkblue}{named}{Blue}
\DeclareMathOperator{\x}{x}
\DeclareMathOperator{\y}{y}
\DeclareMathOperator{\survivors}{\overline\kappa}
\DeclareMathOperator{\killed}{\kappa}
\DeclareMathOperator{\tripod}{tripod}
\newcolumntype{C}{ >{\centering\arraybackslash} m{3em} }
\newcommand{\borisspace}{\,}
\setlist{noitemsep}
\title{\MakeUppercase{More Turán-Type Theorems for Triangles in Convex Point Sets}}
\author{%
Boris Aronov\thanks{%
    Partially supported by NSF Grants CCF-11-17336, CCF-12-18791, and
    CCF-15-40656, and by BSF grant 2014/170. Department of Computer
    Science and Engineering, Tandon School of Engineering, New York
    University, USA. },\,
Vida Dujmović\thanks{%
    Partially supported by NSERC and the Ontario Ministry of Research
    and Innovation. Department of Computer Science and Electrical Engineering,
    University of Ottawa, Canada.},
Pat Morin\thanks{%
    Partially supported by NSERC. School of Computer Science, Carleton
    University, Canada.},\,
Aurélien Ooms\thanks{%
    Supported by the Fund for Research Training in Industry and
    Agriculture (FRIA).  Département d'Informatique, Université libre 
    de Bruxelles (ULB), Belgium.  
},\,\\
and Luís Fernando Schultz Xavier da Silveira\footnotemark[2]
}
\newcommand{\taco}{\raisebox{-.1ex}{\includegraphics[height=1.6ex]{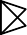}}}
\newcommand{\mariposa}{\raisebox{-.1ex}{\includegraphics[height=1.6ex]{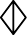}}}
\newcommand{\bat}{\raisebox{-.1ex}{\includegraphics[height=1.6ex]{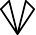}}}
\newcommand{\nested}{\raisebox{-.1ex}{\includegraphics[height=1.6ex]{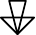}}}
\newcommand{\crossing}{\raisebox{-.1ex}{\includegraphics[height=1.6ex]{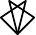}}}
\newcommand{\ears}{\raisebox{-.1ex}{\includegraphics[height=1.6ex]{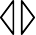}}}
\newcommand{\swords}{\raisebox{-.1ex}{\includegraphics[height=1.6ex]{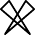}}}
\newcommand{\david}{\raisebox{-.1ex}{\includegraphics[height=1.6ex]{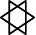}}}
\DeclareMathOperator{\ex}{ex}
\begin{document}
\begin{titlepage}
\maketitle

\begin{abstract}
  We study the following family of problems: Given a set of $n$ points
  in convex position, what is the maximum number triangles one can
  create having these points as vertices while avoiding certain sets
  of \emph{forbidden configurations}.  As forbidden configurations
  we consider all 8 ways in which a pair of triangles in such a point
  set can interact.  This leads to 256 extremal Turán-type questions.
  We give nearly tight (within a $\log n$ factor) bounds for 248 of these
  questions and show that the remaining 8 questions are all asymptotically
  equivalent to Stein's longstanding tripod packing problem.
\end{abstract}

\end{titlepage}

\tableofcontents

\newpage

\section{Introduction}
\pagenumbering{arabic}

Let $t_1$ and $t_2$ be a pair of distinct triangles whose (4 to 6)
vertices are in convex position.  There are 8 combinatorially distinct
ways that these triangles can interact:  2 ways in which the triangles
can share an edge (\taco\ and \mariposa), 3 ways in which the triangles
can share a single vertex (\bat, \nested, and \crossing), and 3 ways
in which the triangles can have no vertices in common (\ears, \swords,
and \david).  Because it is difficult to keep track of nameless entities,
we assign a mnemonic to each configuration (though the reader is encouraged
to choose their own):

\begin{center}
  \begin{tabular}{|c|c|c|c|c|c|c|c|c}\hline
    \taco & \mariposa & \bat & \nested & \crossing & \ears & \swords & \david \\
    taco & mariposa & bat & nested & crossing & ears & swords & david  \\ \hline
  \end{tabular}
\end{center}

We consider the following class of problems:  Given a set, $X$,
of combinatorial configurations of pairs of triangles, what is the
size of a largest set, $S$, of triangles one can create whose vertices are $n$
points in convex position, and such that no pair of triangles in $S$
forms a configuration in $X$?  We call the size of such a set $\ex(n,X)$.
For example, 
\begin{equation}
    \ex(n,\{\taco,\nested,\crossing,\swords,\david\}) = n-2 \borisspace .
\end{equation}
This is because the set $X=\{\taco,\nested,\crossing,\swords,\david\}$
forbids any form of crossings between the edges of triangles. Thus, the
maximum number of triangles we can have while avoiding $X$ is the number
of triangles in a triangulation of a convex $n$-gon, i.e., $n-2$. 

\subsection{Previous Work}

Since there are eight possible forbidden configurations, there are
$2^8=256$ sets $X$ for which we can study $\ex(n,X)$.  Some of these sets
have been previously studied.  Bra\ss, Rote, and Swanepoel showed that
$\ex(n,\{\ears,\swords,\bat,\nested\})\le n$ in order to solve an 
Erd\H{o}s problem
on the maximum number of maximum area/perimeter triangles determined
by a point set. Bra\ss\ \cite{brass:turan} later began a systematic
study in which he gave asymptotically tight bounds on $\ex(n,X)$ for all
singleton $X$ and all pairs $X$ of configurations in which two triangles share
a single vertex.


Of course, upper and lower bounds are inherited through the subset
relationship: $\ex(n,X) \le \ex(n,Y)$ for any $X\supseteq Y$.
\tabref{smalltable} shows the complete set of results we obtain when we
apply this exhaustively to the list of previous results.  Each entry in
this table presents the asymptotic behaviour of $\ex(n,X)$ for the set
$X$ obtained as the union of the row and column label.  Asymptotically
tight bounds are coloured green, and gaps between lower and upper bounds
are coloured red. Previous results imply 35 tight bounds for 256 of the
possible choices of $X$.  The configuration $\mariposa$ is omitted from
the table since a simple argument (\lemref{xcup}) shows that its inclusion
in $X$ does not change $\ex(n,X)$ by more than a constant factor.
Some of the results in \tabref{smalltable} are marked with an F if they are easy, or folklore. Some others are marked with H if they follow from a corresponding bound for 3-regular hypergraphs. Specifically, if $X$ includes $\{\bat,\nested,\crossing\}$ then no pair of triangles can share a vertex, so $\ex(n,X)\le n/3$ and if $X$ includes $\{\taco,\mariposa\}$ then no pair of triangles can share an edge, so $\ex(n,X)\le \binom{n}{2}$.

\begin{table}
  \centering{
    \begin{tabular}{|c@{\,}c@{\,}c@{\,}c|C|C|C|C|C|C|C|C|}\hline
&&&&\swords&\swords&\swords&\swords&&&& \\[-1mm] 
&&&&\david&&\david&&\david&&\david& \\[-1mm] 
&&&&\ears&\ears&&&\ears&\ears&& \\ 
\hline\taco&\bat&\nested&\crossing$\rule{0mm}{1em}$&\cellcolor{green!10}$1$\newline F:F&\cellcolor{green!10}$n$\newline T\ref{thm:linear-lower}:\cite{brass.rote.ea:triangles}&\cellcolor{green!10}$n$\newline T\ref{thm:linear-lower}:H&\cellcolor{green!10}$n$\newline T\ref{thm:linear-lower}:H&\cellcolor{green!10}$n$\newline T\ref{thm:linear-lower}:H&\cellcolor{green!10}$n$\newline T\ref{thm:linear-lower}:H&\cellcolor{green!10}$n$\newline T\ref{thm:linear-lower}:H&\cellcolor{green!10}$n$\newline T\ref{thm:linear-lower}:H\\ \hline
\taco&&\nested&\crossing$\rule{0mm}{1em}$&\cellcolor{red!10}$n:n^{2}$\newline T\ref{thm:linear-lower}:H&\cellcolor{red!10}$n:n^{2}$\newline T\ref{thm:linear-lower}:H&\cellcolor{red!10}$n:n^{2}$\newline T\ref{thm:linear-lower}:H&\cellcolor{red!10}$n:n^{2}$\newline T\ref{thm:linear-lower}:H&\cellcolor{red!10}$n:n^{2}$\newline T\ref{thm:linear-lower}:H&\cellcolor{red!10}$n:n^{2}$\newline T\ref{thm:linear-lower}:H&\cellcolor{red!10}$n:n^{2}$\newline T\ref{thm:linear-lower}:H&\cellcolor{red!10}$n:n^{2}$\newline T\ref{thm:linear-lower}:H\\ \hline
\taco&\bat&\nested&$\rule{0mm}{1em}$&\cellcolor{green!10}$n$\newline T\ref{thm:linear-lower}:\cite{brass.rote.ea:triangles}&\cellcolor{green!10}$n$\newline T\ref{thm:linear-lower}:\cite{brass.rote.ea:triangles}&\cellcolor{red!10}$n:n^{2}$\newline T\ref{thm:linear-lower}:H&\cellcolor{red!10}$n:n^{2}$\newline T\ref{thm:linear-lower}:H&\cellcolor{red!10}$n:n^{2}$\newline T\ref{thm:linear-lower}:H&\cellcolor{red!10}$n:n^{2}$\newline T\ref{thm:linear-lower}:H&\cellcolor{red!10}$n:n^{2}$\newline T\ref{thm:linear-lower}:H&\cellcolor{red!10}$n:n^{2}$\newline T\ref{thm:linear-lower}:H\\ \hline
\taco&&\nested&$\rule{0mm}{1em}$&\cellcolor{red!10}$n:n^{2}$\newline T\ref{thm:linear-lower}:H&\cellcolor{red!10}$n:n^{2}$\newline T\ref{thm:linear-lower}:H&\cellcolor{red!10}$n:n^{2}$\newline T\ref{thm:linear-lower}:H&\cellcolor{red!10}$n:n^{2}$\newline T\ref{thm:linear-lower}:H&\cellcolor{red!10}$n:n^{2}$\newline T\ref{thm:linear-lower}:H&\cellcolor{red!10}$n:n^{2}$\newline T\ref{thm:linear-lower}:H&\cellcolor{red!10}$n:n^{2}$\newline T\ref{thm:linear-lower}:H&\cellcolor{red!10}$n:n^{2}$\newline T\ref{thm:linear-lower}:H\\ \hline
&\bat&\nested&\crossing$\rule{0mm}{1em}$&\cellcolor{green!10}$n$\newline T\ref{thm:linear-lower}:\cite{brass.rote.ea:triangles}&\cellcolor{green!10}$n$\newline T\ref{thm:linear-lower}:\cite{brass.rote.ea:triangles}&\cellcolor{green!10}$n$\newline T\ref{thm:linear-lower}:H&\cellcolor{green!10}$n$\newline T\ref{thm:linear-lower}:H&\cellcolor{green!10}$n$\newline T\ref{thm:linear-lower}:H&\cellcolor{green!10}$n$\newline T\ref{thm:linear-lower}:H&\cellcolor{green!10}$n$\newline T\ref{thm:linear-lower}:H&\cellcolor{green!10}$n$\newline H:H\\ \hline
&&\nested&\crossing$\rule{0mm}{1em}$&\cellcolor{red!10}$n:n^{2}$\newline T\ref{thm:linear-lower}:\cite{brass:turan}&\cellcolor{red!10}$n:n^{2}$\newline T\ref{thm:linear-lower}:\cite{brass:turan}&\cellcolor{red!10}$n:n^{2}$\newline T\ref{thm:linear-lower}:\cite{brass:turan}&\cellcolor{red!10}$n:n^{2}$\newline T\ref{thm:linear-lower}:\cite{brass:turan}&\cellcolor{red!10}$n:n^{2}$\newline T\ref{thm:linear-lower}:\cite{brass:turan}&\cellcolor{red!10}$n:n^{2}$\newline T\ref{thm:linear-lower}:\cite{brass:turan}&\cellcolor{red!10}$n:n^{2}$\newline T\ref{thm:linear-lower}:\cite{brass:turan}&\cellcolor{green!10}$n^{2}$\newline \cite{brass:turan}:\cite{brass:turan}\\ \hline
&\bat&\nested&$\rule{0mm}{1em}$&\cellcolor{green!10}$n$\newline T\ref{thm:linear-lower}:\cite{brass.rote.ea:triangles}&\cellcolor{green!10}$n$\newline \cite{brass.rote.ea:triangles}:\cite{brass.rote.ea:triangles}&\cellcolor{red!10}$n:n^{2}$\newline T\ref{thm:linear-lower}:\cite{brass:turan}&\cellcolor{red!10}$n:n^{2}$\newline \cite{brass.rote.ea:triangles}:\cite{brass:turan}&\cellcolor{red!10}$n:n^{2}$\newline T\ref{thm:linear-lower}:\cite{brass:turan}&\cellcolor{red!10}$n:n^{2}$\newline \cite{brass.rote.ea:triangles}:\cite{brass:turan}&\cellcolor{red!10}$n:n^{2}$\newline T\ref{thm:linear-lower}:\cite{brass:turan}&\cellcolor{green!10}$n^{2}$\newline \cite{brass:turan}:\cite{brass:turan}\\ \hline
&&\nested&$\rule{0mm}{1em}$&\cellcolor{red!10}$n:n^{2}$\newline T\ref{thm:linear-lower}:\cite{brass:turan}&\cellcolor{red!10}$n:n^{2}$\newline \cite{brass.rote.ea:triangles}:\cite{brass:turan}&\cellcolor{red!10}$n:n^{2}$\newline T\ref{thm:linear-lower}:\cite{brass:turan}&\cellcolor{red!10}$n:n^{2}$\newline \cite{brass.rote.ea:triangles}:\cite{brass:turan}&\cellcolor{red!10}$n:n^{2}$\newline T\ref{thm:linear-lower}:\cite{brass:turan}&\cellcolor{red!10}$n:n^{2}$\newline \cite{brass.rote.ea:triangles}:\cite{brass:turan}&\cellcolor{red!10}$n:n^{2}$\newline T\ref{thm:linear-lower}:\cite{brass:turan}&\cellcolor{green!10}$n^{2}$\newline \cite{brass:turan}:\cite{brass:turan}\\ \hline
\taco&\bat&&\crossing$\rule{0mm}{1em}$&\cellcolor{red!10}$n:n^{2}$\newline T\ref{thm:linear-lower}:H&\cellcolor{red!10}$n:n^{2}$\newline T\ref{thm:linear-lower}:H&\cellcolor{red!10}$n:n^{2}$\newline T\ref{thm:linear-lower}:H&\cellcolor{red!10}$n:n^{2}$\newline T\ref{thm:linear-lower}:H&\cellcolor{red!10}$n:n^{2}$\newline T\ref{thm:linear-lower}:H&\cellcolor{red!10}$n:n^{2}$\newline T\ref{thm:linear-lower}:H&\cellcolor{red!10}$n:n^{2}$\newline T\ref{thm:linear-lower}:H&\cellcolor{red!10}$n:n^{2}$\newline T\ref{thm:linear-lower}:H\\ \hline
\taco&&&\crossing$\rule{0mm}{1em}$&\cellcolor{red!10}$n:n^{2}$\newline T\ref{thm:linear-lower}:H&\cellcolor{red!10}$n:n^{2}$\newline T\ref{thm:linear-lower}:H&\cellcolor{red!10}$n:n^{2}$\newline T\ref{thm:linear-lower}:H&\cellcolor{red!10}$n:n^{2}$\newline T\ref{thm:linear-lower}:H&\cellcolor{red!10}$n:n^{2}$\newline T\ref{thm:linear-lower}:H&\cellcolor{red!10}$n:n^{2}$\newline T\ref{thm:linear-lower}:H&\cellcolor{red!10}$n:n^{2}$\newline T\ref{thm:linear-lower}:H&\cellcolor{red!10}$n:n^{2}$\newline T\ref{thm:linear-lower}:H\\ \hline
\taco&\bat&&$\rule{0mm}{1em}$&\cellcolor{red!10}$n:n^{2}$\newline T\ref{thm:linear-lower}:H&\cellcolor{red!10}$n:n^{2}$\newline T\ref{thm:linear-lower}:H&\cellcolor{red!10}$n:n^{2}$\newline T\ref{thm:linear-lower}:H&\cellcolor{red!10}$n:n^{2}$\newline T\ref{thm:linear-lower}:H&\cellcolor{red!10}$n:n^{2}$\newline T\ref{thm:linear-lower}:H&\cellcolor{red!10}$n:n^{2}$\newline T\ref{thm:linear-lower}:H&\cellcolor{red!10}$n:n^{2}$\newline T\ref{thm:linear-lower}:H&\cellcolor{red!10}$n:n^{2}$\newline T\ref{thm:linear-lower}:H\\ \hline
\taco&&&$\rule{0mm}{1em}$&\cellcolor{red!10}$n:n^{2}$\newline T\ref{thm:linear-lower}:H&\cellcolor{red!10}$n:n^{2}$\newline T\ref{thm:linear-lower}:H&\cellcolor{red!10}$n:n^{2}$\newline T\ref{thm:linear-lower}:H&\cellcolor{red!10}$n:n^{2}$\newline T\ref{thm:linear-lower}:H&\cellcolor{red!10}$n:n^{2}$\newline T\ref{thm:linear-lower}:H&\cellcolor{red!10}$n:n^{2}$\newline T\ref{thm:linear-lower}:H&\cellcolor{red!10}$n:n^{2}$\newline T\ref{thm:linear-lower}:H&\cellcolor{green!10}$n^{2}$\newline H:H\\ \hline
&\bat&&\crossing$\rule{0mm}{1em}$&\cellcolor{red!10}$n:n^{2}$\newline T\ref{thm:linear-lower}:\cite{brass:turan}&\cellcolor{red!10}$n:n^{2}$\newline T\ref{thm:linear-lower}:\cite{brass:turan}&\cellcolor{red!10}$n:n^{2}$\newline T\ref{thm:linear-lower}:\cite{brass:turan}&\cellcolor{red!10}$n:n^{2}$\newline T\ref{thm:linear-lower}:\cite{brass:turan}&\cellcolor{red!10}$n:n^{2}$\newline T\ref{thm:linear-lower}:\cite{brass:turan}&\cellcolor{red!10}$n:n^{2}$\newline T\ref{thm:linear-lower}:\cite{brass:turan}&\cellcolor{red!10}$n:n^{2}$\newline T\ref{thm:linear-lower}:\cite{brass:turan}&\cellcolor{green!10}$n^{2}$\newline \cite{brass:turan}:\cite{brass:turan}\\ \hline
&&&\crossing$\rule{0mm}{1em}$&\cellcolor{red!10}$n:n^{2}$\newline T\ref{thm:linear-lower}:\cite{brass:turan}&\cellcolor{red!10}$n:n^{2}$\newline T\ref{thm:linear-lower}:\cite{brass:turan}&\cellcolor{red!10}$n:n^{2}$\newline T\ref{thm:linear-lower}:\cite{brass:turan}&\cellcolor{red!10}$n:n^{2}$\newline T\ref{thm:linear-lower}:\cite{brass:turan}&\cellcolor{red!10}$n:n^{2}$\newline T\ref{thm:linear-lower}:\cite{brass:turan}&\cellcolor{red!10}$n:n^{2}$\newline T\ref{thm:linear-lower}:\cite{brass:turan}&\cellcolor{red!10}$n:n^{2}$\newline T\ref{thm:linear-lower}:\cite{brass:turan}&\cellcolor{green!10}$n^{2}$\newline \cite{brass:turan}:\cite{brass:turan}\\ \hline
&\bat&&$\rule{0mm}{1em}$&\cellcolor{red!10}$n:n^{2}$\newline T\ref{thm:linear-lower}:\cite{brass:turan}&\cellcolor{red!10}$n:n^{2}$\newline \cite{brass.rote.ea:triangles}:\cite{brass:turan}&\cellcolor{red!10}$n:n^{2}$\newline T\ref{thm:linear-lower}:\cite{brass:turan}&\cellcolor{red!10}$n:n^{2}$\newline \cite{brass.rote.ea:triangles}:\cite{brass:turan}&\cellcolor{red!10}$n:n^{2}$\newline T\ref{thm:linear-lower}:\cite{brass:turan}&\cellcolor{red!10}$n:n^{3}$\newline \cite{brass.rote.ea:triangles}:F&\cellcolor{red!10}$n:n^{2}$\newline T\ref{thm:linear-lower}:\cite{brass:turan}&\cellcolor{green!10}$n^{3}$\newline \cite{brass:turan}:\cite{brass:turan}\\ \hline
&&&$\rule{0mm}{1em}$&\cellcolor{green!10}$n^{2}$\newline H:H&\cellcolor{green!10}$n^{2}$\newline H:\cite{brass:turan}&\cellcolor{green!10}$n^{2}$\newline H:\cite{brass:turan}&\cellcolor{green!10}$n^{2}$\newline \cite{brass:turan}:\cite{brass:turan}&\cellcolor{green!10}$n^{2}$\newline H:\cite{brass:turan}&\cellcolor{green!10}$n^{3}$\newline \cite{brass:turan}:\cite{brass:turan}&\cellcolor{green!10}$n^{2}$\newline \cite{brass:turan}:\cite{brass:turan}&\cellcolor{green!10}$n^{3}$\newline F:F\\ \hline
\end{tabular}
  }
	\caption{Previous lower and upper bounds for $\ex(n,X)$. T2 denotes a (easy) lower bound of $\Omega(n)$ that appears in \thmref{linear-lower}.  F denotes an obvious, or folklore result. H denotes a bound that follows from the corresponding bound on 3-regular hypergraphs.}  \tablabel{smalltable}
\end{table}

\subsection{New Results}

In the current paper, we determine, up to a logarithmic factor, the
asymptotics of $\ex(n,X)$ for 248 sets $X$.  These results are shown in
\tabref{bigtable}. For the remaining 8 sets, we have determined that
the asymptotics are all the same and are equivalent to a problem that
appears in various contexts and under different names, including monotone
matrices, tripod packing, and 2-comparable triples. We discuss this
problem and its rich history in \secref{tripods}.  

\begin{table}
  \centering{
    \begin{tabular}{|c@{\,}c@{\,}c@{\,}c|C|C|C|C|C|C|C|C|}\hline
&&&&\swords&\swords&\swords&\swords&&&& \\[-1mm] 
&&&&\david&&\david&&\david&&\david& \\[-1mm] 
&&&&\ears&\ears&&&\ears&\ears&& \\ 
\hline\taco&\bat&\nested&\crossing$\rule{0mm}{1em}$&\cellcolor{green!10}$1$\newline F:F&\cellcolor{green!10}$n$\newline T\ref{thm:linear-lower}:\cite{brass.rote.ea:triangles}&\cellcolor{green!10}$n$\newline T\ref{thm:linear-lower}:H&\cellcolor{green!10}$n$\newline T\ref{thm:linear-lower}:H&\cellcolor{green!10}$n$\newline T\ref{thm:linear-lower}:H&\cellcolor{green!10}$n$\newline T\ref{thm:linear-lower}:H&\cellcolor{green!10}$n$\newline T\ref{thm:linear-lower}:H&\cellcolor{green!10}$n$\newline T\ref{thm:linear-lower}:H\\ \hline
\taco&&\nested&\crossing$\rule{0mm}{1em}$&\cellcolor{green!40}$n^*$\newline T\ref{thm:linear-lower}:T\ref{thm:crossing-swords}&\cellcolor{green!40}$n^*$\newline T\ref{thm:linear-lower}:T\ref{thm:crossing-swords}&\cellcolor{green!40}$n^*$\newline T\ref{thm:linear-lower}:T\ref{thm:crossing-swords}&\cellcolor{green!40}$n^*$\newline T\ref{thm:linear-lower}:T\ref{thm:crossing-swords}&\cellcolor{green!40}$n^*$\newline T\ref{thm:linear-lower}:T\ref{thm:taco-nested-crossing}&\cellcolor{green!40}$n^*$\newline T\ref{thm:linear-lower}:T\ref{thm:taco-nested-crossing}&\cellcolor{green!40}$n^*$\newline T\ref{thm:linear-lower}:T\ref{thm:taco-nested-crossing}&\cellcolor{green!40}$n^*$\newline T\ref{thm:linear-lower}:\textbf{T\ref{thm:taco-nested-crossing}}\\ \hline
\taco&\bat&\nested&$\rule{0mm}{1em}$&\cellcolor{green!10}$n$\newline T\ref{thm:linear-lower}:\cite{brass.rote.ea:triangles}&\cellcolor{green!10}$n$\newline T\ref{thm:linear-lower}:\cite{brass.rote.ea:triangles}&\cellcolor{green!40}$n^*$\newline T\ref{thm:linear-lower}:T\ref{thm:taco-swords}&\cellcolor{green!40}$n^*$\newline T\ref{thm:linear-lower}:T\ref{thm:taco-swords}&\cellcolor{green!40}$n^*$\newline T\ref{thm:linear-lower}:T\ref{thm:nested-bat-david}&\cellcolor{red!40}tripods&\cellcolor{green!40}$n^*$\newline T\ref{thm:linear-lower}:T\ref{thm:nested-bat-david}&\cellcolor{red!40}tripods\\ \hline
\taco&&\nested&$\rule{0mm}{1em}$&\cellcolor{green!40}$n^*$\newline T\ref{thm:linear-lower}:T\ref{thm:taco-swords}&\cellcolor{green!40}$n^*$\newline T\ref{thm:linear-lower}:T\ref{thm:taco-swords}&\cellcolor{green!40}$n^*$\newline T\ref{thm:linear-lower}:T\ref{thm:taco-swords}&\cellcolor{green!40}$n^*$\newline T\ref{thm:linear-lower}:T\ref{thm:taco-swords}&\cellcolor{green!40}$n^*$\newline T\ref{thm:linear-lower}:T\ref{thm:taco-nested-david}&\cellcolor{red!40}tripods&\cellcolor{green!40}$n^*$\newline T\ref{thm:linear-lower}:\textbf{T\ref{thm:taco-nested-david}}&\cellcolor{red!40}tripods\\ \hline
&\bat&\nested&\crossing$\rule{0mm}{1em}$&\cellcolor{green!10}$n$\newline T\ref{thm:linear-lower}:\cite{brass.rote.ea:triangles}&\cellcolor{green!10}$n$\newline T\ref{thm:linear-lower}:\cite{brass.rote.ea:triangles}&\cellcolor{green!10}$n$\newline T\ref{thm:linear-lower}:H&\cellcolor{green!10}$n$\newline T\ref{thm:linear-lower}:H&\cellcolor{green!10}$n$\newline T\ref{thm:linear-lower}:H&\cellcolor{green!10}$n$\newline T\ref{thm:linear-lower}:H&\cellcolor{green!10}$n$\newline T\ref{thm:linear-lower}:H&\cellcolor{green!10}$n$\newline H:H\\ \hline
&&\nested&\crossing$\rule{0mm}{1em}$&\cellcolor{green!40}$n^*$\newline T\ref{thm:linear-lower}:T\ref{thm:crossing-swords}&\cellcolor{green!40}$n^*$\newline T\ref{thm:linear-lower}:T\ref{thm:crossing-swords}&\cellcolor{green!40}$n^*$\newline T\ref{thm:linear-lower}:T\ref{thm:crossing-swords}&\cellcolor{green!40}$n^*$\newline T\ref{thm:linear-lower}:T\ref{thm:crossing-swords}&\cellcolor{green!40}$n^*$\newline T\ref{thm:linear-lower}:T\ref{thm:nested-crossing-ears}&\cellcolor{green!40}$n^*$\newline T\ref{thm:linear-lower}:\textbf{T\ref{thm:nested-crossing-ears}}&\cellcolor{green!40}$n^{2}$\newline \textbf{T\ref{thm:david-nested-crossing}}:\cite{brass:turan}&\cellcolor{green!10}$n^{2}$\newline \cite{brass:turan}:\cite{brass:turan}\\ \hline
&\bat&\nested&$\rule{0mm}{1em}$&\cellcolor{green!10}$n$\newline T\ref{thm:linear-lower}:\cite{brass.rote.ea:triangles}&\cellcolor{green!10}$n$\newline \cite{brass.rote.ea:triangles}:\cite{brass.rote.ea:triangles}&\cellcolor{green!40}$n^*$\newline T\ref{thm:linear-lower}:T\ref{thm:nested-swords}&\cellcolor{green!40}$n^*$\newline \cite{brass.rote.ea:triangles}:T\ref{thm:nested-swords}&\cellcolor{green!40}$n^*$\newline T\ref{thm:linear-lower}:T\ref{thm:nested-bat-david}&\cellcolor{green!40}$n^{2}$\newline \textbf{T\ref{thm:bat-nested-ears}}:\cite{brass:turan}&\cellcolor{green!40}$n^*$\newline T\ref{thm:linear-lower}:\textbf{T\ref{thm:nested-bat-david}}&\cellcolor{green!10}$n^{2}$\newline \cite{brass:turan}:\cite{brass:turan}\\ \hline
&&\nested&$\rule{0mm}{1em}$&\cellcolor{green!40}$n^*$\newline T\ref{thm:linear-lower}:T\ref{thm:nested-swords}&\cellcolor{green!40}$n^*$\newline \cite{brass.rote.ea:triangles}:T\ref{thm:nested-swords}&\cellcolor{green!40}$n^*$\newline T\ref{thm:linear-lower}:T\ref{thm:nested-swords}&\cellcolor{green!40}$n^*$\newline \cite{brass.rote.ea:triangles}:\textbf{T\ref{thm:nested-swords}}&\cellcolor{green!40}$n^*$\newline T\ref{thm:linear-lower}:\textbf{T\ref{thm:nested-ears-david}}&\cellcolor{green!40}$n^{2}$\newline T\ref{thm:bat-nested-ears}:\cite{brass:turan}&\cellcolor{green!40}$n^{2}$\newline T\ref{thm:david-nested-crossing}:\cite{brass:turan}&\cellcolor{green!10}$n^{2}$\newline \cite{brass:turan}:\cite{brass:turan}\\ \hline
\taco&\bat&&\crossing$\rule{0mm}{1em}$&\cellcolor{green!40}$n^*$\newline T\ref{thm:linear-lower}:T\ref{thm:crossing-swords}&\cellcolor{green!40}$n^*$\newline T\ref{thm:linear-lower}:T\ref{thm:crossing-swords}&\cellcolor{green!40}$n^*$\newline T\ref{thm:linear-lower}:T\ref{thm:crossing-swords}&\cellcolor{green!40}$n^*$\newline T\ref{thm:linear-lower}:T\ref{thm:crossing-swords}&\cellcolor{green!40}$n^{2}$\newline \textbf{T\ref{dilwad}}:H&\cellcolor{green!40}$n^{2}$\newline T\ref{dilwad}:H&\cellcolor{green!40}$n^{2}$\newline T\ref{dilwad}:H&\cellcolor{green!40}$n^{2}$\newline T\ref{dilwad}:H\\ \hline
\taco&&&\crossing$\rule{0mm}{1em}$&\cellcolor{green!40}$n^*$\newline T\ref{thm:linear-lower}:T\ref{thm:crossing-swords}&\cellcolor{green!40}$n^*$\newline T\ref{thm:linear-lower}:T\ref{thm:crossing-swords}&\cellcolor{green!40}$n^*$\newline T\ref{thm:linear-lower}:T\ref{thm:crossing-swords}&\cellcolor{green!40}$n^*$\newline T\ref{thm:linear-lower}:T\ref{thm:crossing-swords}&\cellcolor{green!40}$n^{2}$\newline T\ref{dilwad}:H&\cellcolor{green!40}$n^{2}$\newline T\ref{dilwad}:H&\cellcolor{green!40}$n^{2}$\newline T\ref{dilwad}:H&\cellcolor{green!40}$n^{2}$\newline T\ref{dilwad}:H\\ \hline
\taco&\bat&&$\rule{0mm}{1em}$&\cellcolor{green!40}$n^*$\newline T\ref{thm:linear-lower}:T\ref{thm:taco-swords}&\cellcolor{green!40}$n^*$\newline T\ref{thm:linear-lower}:T\ref{thm:taco-swords}&\cellcolor{green!40}$n^*$\newline T\ref{thm:linear-lower}:T\ref{thm:taco-swords}&\cellcolor{green!40}$n^*$\newline T\ref{thm:linear-lower}:T\ref{thm:taco-swords}&\cellcolor{green!40}$n^{2}$\newline T\ref{dilwad}:H&\cellcolor{green!40}$n^{2}$\newline T\ref{dilwad}:H&\cellcolor{green!40}$n^{2}$\newline T\ref{dilwad}:H&\cellcolor{green!40}$n^{2}$\newline T\ref{dilwad}:H\\ \hline
\taco&&&$\rule{0mm}{1em}$&\cellcolor{green!40}$n^*$\newline T\ref{thm:linear-lower}:T\ref{thm:taco-swords}&\cellcolor{green!40}$n^*$\newline T\ref{thm:linear-lower}:T\ref{thm:taco-swords}&\cellcolor{green!40}$n^*$\newline T\ref{thm:linear-lower}:T\ref{thm:taco-swords}&\cellcolor{green!40}$n^*$\newline T\ref{thm:linear-lower}:\textbf{T\ref{thm:taco-swords}}&\cellcolor{green!40}$n^{2}$\newline T\ref{dilwad}:H&\cellcolor{green!40}$n^{2}$\newline T\ref{dilwad}:H&\cellcolor{green!40}$n^{2}$\newline T\ref{dilwad}:H&\cellcolor{green!10}$n^{2}$\newline H:H\\ \hline
&\bat&&\crossing$\rule{0mm}{1em}$&\cellcolor{green!40}$n^*$\newline T\ref{thm:linear-lower}:T\ref{thm:crossing-swords}&\cellcolor{green!40}$n^*$\newline T\ref{thm:linear-lower}:T\ref{thm:crossing-swords}&\cellcolor{green!40}$n^*$\newline T\ref{thm:linear-lower}:T\ref{thm:crossing-swords}&\cellcolor{green!40}$n^*$\newline T\ref{thm:linear-lower}:T\ref{thm:crossing-swords}&\cellcolor{green!40}$n^{2}$\newline T\ref{dilwad}:\cite{brass:turan}&\cellcolor{green!40}$n^{2}$\newline T\ref{dilwad}:\cite{brass:turan}&\cellcolor{green!40}$n^{2}$\newline T\ref{dilwad}:\cite{brass:turan}&\cellcolor{green!10}$n^{2}$\newline \cite{brass:turan}:\cite{brass:turan}\\ \hline
&&&\crossing$\rule{0mm}{1em}$&\cellcolor{green!40}$n^*$\newline T\ref{thm:linear-lower}:T\ref{thm:crossing-swords}&\cellcolor{green!40}$n^*$\newline T\ref{thm:linear-lower}:T\ref{thm:crossing-swords}&\cellcolor{green!40}$n^*$\newline T\ref{thm:linear-lower}:T\ref{thm:crossing-swords}&\cellcolor{green!40}$n^*$\newline T\ref{thm:linear-lower}:\textbf{T\ref{thm:crossing-swords}}&\cellcolor{green!40}$n^{2}$\newline T\ref{dilwad}:\cite{brass:turan}&\cellcolor{green!40}$n^{2}$\newline T\ref{dilwad}:\cite{brass:turan}&\cellcolor{green!40}$n^{2}$\newline T\ref{thm:david-nested-crossing}:\cite{brass:turan}&\cellcolor{green!10}$n^{2}$\newline \cite{brass:turan}:\cite{brass:turan}\\ \hline
&\bat&&$\rule{0mm}{1em}$&\cellcolor{green!40}$n^{2}$\newline \textbf{T\ref{thm:swords-bat-ears-david}}:\cite{brass:turan}&\cellcolor{green!40}$n^{2}$\newline T\ref{thm:swords-bat-ears-david}:\cite{brass:turan}&\cellcolor{green!40}$n^{2}$\newline T\ref{thm:swords-bat-ears-david}:\cite{brass:turan}&\cellcolor{green!40}$n^{2}$\newline T\ref{thm:swords-bat-ears-david}:\cite{brass:turan}&\cellcolor{green!40}$n^{2}$\newline T\ref{thm:swords-bat-ears-david}:\cite{brass:turan}&\cellcolor{green!40}$n^{3}$\newline \textbf{T\ref{thm:pairwise-crossing}}:F&\cellcolor{green!40}$n^{2}$\newline T\ref{thm:swords-bat-ears-david}:\cite{brass:turan}&\cellcolor{green!10}$n^{3}$\newline \cite{brass:turan}:\cite{brass:turan}\\ \hline
&&&$\rule{0mm}{1em}$&\cellcolor{green!10}$n^{2}$\newline H:H&\cellcolor{green!10}$n^{2}$\newline H:\cite{brass:turan}&\cellcolor{green!10}$n^{2}$\newline H:\cite{brass:turan}&\cellcolor{green!10}$n^{2}$\newline \cite{brass:turan}:\cite{brass:turan}&\cellcolor{green!10}$n^{2}$\newline H:\cite{brass:turan}&\cellcolor{green!10}$n^{3}$\newline \cite{brass:turan}:\cite{brass:turan}&\cellcolor{green!10}$n^{2}$\newline \cite{brass:turan}:\cite{brass:turan}&\cellcolor{green!10}$n^{3}$\newline F:F\\ \hline
\end{tabular}
    \\[1ex]
    $n^*=n:n\log n$ \quad $\mathrm{tripods}=n^{1.546}:n^2/e^{\Omega(\log^* n)}$
  }
  \caption{New and previous bounds for $\ex(n,X)$, up to a factor of $\log n$.
  New near-optimal results are in dark(er) green. TX denotes Theorem~X in this paper and [X] denotes reference X in this paper. For example T16:\cite{brass:turan} denotes a lower bound that appears in Theorem~16 and an upper bound due to Bra\ss\ \cite{brass:turan}.}
  \tablabel{bigtable}
\end{table}

The rest of this paper is organized as follows.  In
\secref{easy-results} we present a few easy results that we need for
completeness.  In \secref{points-of-view} we discuss different ways
of thinking about the problem.  In particular, we present a series of
puzzles whose solutions determine the asymptotic growth of $\ex(n,X)$.
In \secref{new-results}, which represents the technical meat of the paper,
we use these puzzles to derive new upper and lower bounds.

\section{Easy Results}
\seclabel{easy-results}

In this section we present an easy result that cuts our work in half
by reducing the number of problems from 256 to 128.  We then describe some
easy lower bound constructions that are required for completeness.

\subsection{Mariposas are Irrelevant}

The following lemma shows that including the $\mariposa$ configuration in
the set $X$ of forbidden configurations has no effect on the asymptotics
of $\ex(n,X)$.

\begin{lem}\lemlabel{xcup}
   For any $X$, $\ex(n,X\cup\{\mariposa\}) \ge \ex(n,X)/8$.
\end{lem}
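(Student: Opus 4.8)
The plan is to reduce the lemma to a statement about a single arbitrary set of triangles: \emph{any} set $S$ of triangles with vertices among $n$ points in convex position contains a subset $S'$ with $|S'|\ge|S|/8$ in which no two triangles form a $\mariposa$. Granting this, apply it to a set $S$ of $\ex(n,X)$ triangles on the $n$ points that avoids every configuration in $X$; the resulting $S'$ still avoids every configuration in $X$ (being a subset of $S$) and additionally avoids $\mariposa$, so $\ex(n,X\cup\{\mariposa\})\ge|S'|\ge|S|/8=\ex(n,X)/8$.

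To prove the reduced statement I will use the geometric description of a $\mariposa$: two distinct triangles $t_1,t_2$ form a $\mariposa$ exactly when they share an edge $e$ and the vertex of $t_1$ not on $e$ and the vertex of $t_2$ not on $e$ lie on opposite sides of the line spanned by $e$ — equivalently, $t_1\cup t_2$ is a convex quadrilateral having $e$ as a diagonal. (Of the two edge-sharing configurations this is the one with no crossing pair of edges; $\taco$ is the ``same side'' one, which is why a triangulation of the convex $n$-gon avoids $\{\taco,\nested,\crossing,\swords,\david\}$, as in the introduction.) In particular, only triangles sharing an edge can form a $\mariposa$. Now assign to each of the $\binom n2$ segments $e$ spanned by two of the points one of the two sides of the line through $e$, chosen uniformly at random and independently over all such segments; write $\sigma(e)$ for the chosen side. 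Call $t\in S$ \emph{good} if, for each of the three edges $e$ of $t$, the third vertex of $t$ lies on the side $\sigma(e)$, and let $S'$ be the set of good triangles. Since the three edges of a triangle are \emph{distinct}, the three events defining ``$t$ is good'' are independent, so $\Pr[t\in S']=1/8$ and the expected size of $S'$ is $|S|/8$; fix an assignment $\sigma$ with $|S'|\ge|S|/8$. This $S'$ is $\mariposa$-free: if $t_1,t_2\in S'$ shared an edge $e$, the third vertex of each would lie on the single side $\sigma(e)$, so these two vertices are \emph{not} on opposite sides of $e$, and by the description above $t_1,t_2$ do not form a $\mariposa$; triangles with no common edge never form one.

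No step here is delicate; the argument is just the probabilistic (equivalently, averaging) method applied after isolating the right combinatorial fact — that a $\mariposa$ is a shared edge together with the requirement that the two opposite vertices straddle it. The only things to get right are this characterization (in particular, that it is the ``opposite sides''/diagonal case and not the $\taco$ case) and the observation that a triangle has three distinct edges, which is what yields the constant $8$ (three independent fair coins). If one prefers to avoid randomness, the same bound follows by noting that the average of $|S'|$ over all $2^{\binom n2}$ choices of the sides $\sigma(\cdot)$ equals $|S|/8$.
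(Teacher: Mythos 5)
Your proposal is correct and follows essentially the same argument as the paper: an independent uniform binary choice for each of the $\binom{n}{2}$ segments (the paper picks a direction per edge, you pick a side, which is the same coin), survival probability $1/8$ per triangle by independence of its three distinct edges, linearity of expectation, and $\mariposa$-freeness of the survivors because two triangles forming a $\mariposa$ need their apexes on opposite sides of the shared edge. Your explicit verification that $\mariposa$ is the ``opposite sides'' case is a detail the paper leaves implicit, but the approach is identical.
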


\begin{proof}
  Let $S$ be a set of triangles that achieves $\ex(n,X)$. For each pair
  of vertices $u$ and $w$ independently and uniformly choose a direction
  $\overrightarrow{uw}$ or $\overleftarrow{uw}$.  We then obtain a set
  $S'\subseteq S$ by removing any triangle that has a directed edge for
  which the triangle is to the left of the edge.  Observe that the set
  $S'$ does not contain a $\mariposa$ configuration.

  For any particular triangle $t\in S$, the probability that $t\in S'$
  is exactly $1/8$ since each of $t$'s three edges must be directed
  clockwise and edge directions are chosen independently.  By linearity of
  expectation, $\E[|S'|]=|S|/8=\ex(n,X)/8$.  We conclude therefore that
  there exists some subset $S''\subseteq S$ of size least $\ex(n,X)/8$
  that does not contain a $\mariposa$ configuration.  The set $S''$ proves
  that $\ex(n,X\cup\{\mariposa\}) \ge \ex(n,X)/8$.
\end{proof}

\subsection{Cubic-Sized Sets of Pairwise Crossing Triangles}

\begin{thm}\thmlabel{pairwise-crossing}
   $\ex(n,\{\ears,\bat,\mariposa\})\in\Omega(n^3)$.
\end{thm}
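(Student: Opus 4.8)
The plan is to exhibit an explicit family of $\Omega(n^3)$ triangles on $n$ points in convex position in which no two triangles form an \ears, a \bat, or a \mariposa. Place the points $p_1,\dots,p_n$ on a circle in this cyclic order and think of each triangle as a triple $\{i,j,k\}$ with $i<j<k$. The \mariposa\ configuration is two triangles sharing an edge whose two apexes lie on opposite sides of that edge; the \bat\ configuration is two triangles sharing exactly one vertex with their other four vertices ``interleaved'' around that vertex; and the \ears\ configuration is two vertex-disjoint triangles that are ``linked'' (each separates two vertices of the other along the circle). So I need a cubic family avoiding all three of these, while the remaining five interaction types (\taco, \nested, \crossing, \swords, \david) are allowed.

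First I would try the construction where every triangle uses $p_1$ as one of its vertices: $S = \{\{1,j,k\}: 1<j<k\le n\}$. This is only $\binom{n-1}{2}=\Theta(n^2)$ triangles, so it is not enough, but it is instructive — any two such triangles share the vertex $p_1$, and since all other four vertices lie on the arc $p_2\ldots p_n$ on one side, no two of them are a \bat, and of course none is an \ears\ (not disjoint) or a \mariposa\ (shared vertex, not shared edge). To reach $\Omega(n^3)$ I would instead use all triangles that are ``one-sided'' in a suitable sense: fix the construction so that each triangle $\{i,j,k\}$ is required to have its three vertices confined to a half of the point set, or more precisely, iterate the star construction — partition or nest the point set so that within each of $\Theta(n)$ blocks we get $\Theta(n^2)$ triangles that pairwise avoid the bad configurations, and arrange the blocks so that inter-block pairs are also safe. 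Concretely, I expect the right family to be: all triples $\{i,j,k\}$ with $i<j<k$ such that $j = i+1$ (triangles with a ``short'' edge $p_ip_{i+1}$ and an arbitrary apex); there are $(n-2)$ choices of the short edge and up to $n-2$ apexes, giving $\Theta(n^2)$, still not cubic. The genuinely cubic construction must allow all three indices to vary freely subject to one linear constraint that kills \ears, \bat, and \mariposa\ simultaneously — for instance, all triples $\{i,j,k\}$ with $i<j<k$ and $k\le i + n/2$ minus some lower-order correction, i.e. triangles spanning less than a semicircle; there are $\Theta(n^3)$ of these.

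So the core of the argument is: (1) define $S$ as all triangles $\{i,j,k\}$, $i<j<k$, whose vertices lie within an arc of at most $\lfloor n/2\rfloor$ consecutive points (equivalently $k-i\le n/2$); (2) count: $|S| = \Theta(n^3)$ by a routine summation; (3) verify no \mariposa: two triangles sharing edge $p_ap_b$ with apexes $c,d$ both within $n/2$ of the shorter arc between $a$ and $b$ forces $c$ and $d$ to lie on the same side of chord $p_ap_b$, so it is a \taco, not a \mariposa; (4) verify no \bat\ and no \ears\ by a similar arc-length case analysis showing that the ``interleaving''/``linking'' pattern that defines those two configurations requires the involved vertices to straddle a semicircle, which is impossible when both triangles are short. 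Then \thmref{pairwise-crossing} follows, and by \lemref{xcup} the presence of \mariposa\ in the forbidden set costs only a constant factor, so one could equivalently have stated it for $\{\ears,\bat\}$.

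The main obstacle I anticipate is pinning down the exact linear constraint (the ``arc length $\le n/2$'' threshold, or whatever the correct fraction turns out to be) so that \emph{all three} of \ears, \bat, \mariposa\ are excluded while the count stays $\Theta(n^3)$; it is easy to kill one or two of them, but the \bat\ configuration (shared vertex, four interleaved neighbors) is the delicate one, since ``short'' triangles sharing a vertex can still interleave unless the arc bound is tight enough. I would handle this by drawing the shared vertex at the ``top'' of the circle and checking that two short triangles hanging off it have their four other vertices appearing in non-interleaved (nested or crossing or side-by-side) cyclic order; the worst case fixes the constant in the constraint, and that same constant will need to be checked against the \ears\ case, which should be easier because disjointness plus shortness is very restrictive.
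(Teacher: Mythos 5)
There is a genuine gap: the construction you propose does not avoid the forbidden configurations, and no choice of the arc-length threshold can repair it. The set $\{\ears,\bat,\mariposa\}$ consists precisely of the three pairwise interactions in which \emph{no} edge of one triangle crosses an edge of the other: the separated disjoint pair (\ears), the separated shared-vertex pair (\bat), and the shared-edge pair whose apexes lie on opposite sides of the common edge (\mariposa). (You have these partly backwards: the ``linked'' disjoint pair is \david\ and the ``interleaved'' shared-vertex pair is \crossing, and neither of those is forbidden here.) Your family of all ``short'' triangles, those with $k-i\le n/2$, contains for example $\{1,2,3\}$ and $\{4,5,6\}$, which form an \ears; $\{1,2,3\}$ and $\{1,4,5\}$, which form a \bat; and $\{1,2,3\}$ and $\{1,3,4\}$, which form a \mariposa. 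All of these live inside an arc of six consecutive points, so tightening the threshold from $n/2$ to anything at all cannot exclude them: ``shortness'' is simply not correlated with avoiding the non-crossing configurations. Your intuition in step (4) --- that the bad patterns ``require the involved vertices to straddle a semicircle'' --- is exactly wrong; it is the \emph{allowed} patterns (\david, \crossing, \taco) that need the triangles to spread out and cross.

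The correct idea is the opposite of confining triangles to short arcs: one must force every pair of triangles in the family to have a pair of properly crossing edges, since that is what simultaneously rules out \ears, \bat, and \mariposa. The paper does this by splitting the $n$-gon into three contiguous arcs $A$, $B$, $C$ of size about $n/3$ and taking all $\lfloor n/3\rfloor^3$ transversal triangles with one vertex in each arc; any two such triangles are easily checked to have a crossing pair of edges. If you want to salvage your write-up, replace the arc-length constraint by this tripartite constraint and redo step (3)--(4) as a single verification that transversal triangles always cross.
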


\begin{proof}
  Partition the vertices of the convex $n$-gon into three contiguous
  sets, $A$, $B$, and $C$, each of size $\lfloor n/3\rfloor$ or
  $\lceil n/3\rceil$, as appropriate.  Consider the set, $S$, of all
  triangles having one vertex in each of $A$, $B$, and $C$. It is easy
  to check that any two triangles in $S$ have a pair of edges that
  cross, thus they do not form any of \ears, \bat, or \mariposa.
  Furthermore, $|S|\ge \lfloor n/3\rfloor^3\in\Omega(n^3)$, so
  $\ex(n,\{\ears,\bat,\mariposa\})\in\Omega(n^3)$.
\end{proof}

\subsection{Linear-Sized Sets Using Only a Single Configuration}

Since it is not explicitly stated in previous work, and we need it to complete our table, we now observe that for any configuration $x\in\{\taco,\bat,\nested,\crossing,\ears,\swords,\david\}$, one can create a linear-sized set of triangles that avoids all configurations except $x$.

\begin{thm}\thmlabel{linear-lower}
For any $X\subsetneq\{\taco,\bat,\nested,\crossing,\ears,\swords,\david\}$,
$\ex(n,X)\in \Omega(n)$.
\end{thm}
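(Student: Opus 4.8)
The plan is to prove the (slightly stronger) statement anticipated in the paragraph preceding the theorem: for \emph{each} configuration $x\in\{\taco,\bat,\nested,\crossing,\ears,\swords,\david\}$, there is a set $S_x$ of $\Omega(n)$ triangles on the $n$ convex points such that every pair of triangles in $S_x$ forms the configuration $x$. Granting this, the theorem is immediate: given a proper subset $X$ of the seven-element set, pick some $x$ in that set with $x\notin X$; then no pair of triangles in $S_x$ forms a configuration in $X$, so $\ex(n,X)\ge|S_x|\in\Omega(n)$. Note that $\mariposa$ never lies in $X$, so it is never an obstruction; in fact each $S_x$ below uses the single type $x$.

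The three vertex-sharing types admit a uniform treatment. Fix the vertex $p_1$, and for a suitable family $\{c_k\}$ of chords with endpoints in $\{p_2,\dots,p_n\}$, put $T_k:=\{p_1\}\cup c_k$; two such triangles meet only at $p_1$, and the cyclic order of their five vertices --- hence their configuration --- is dictated entirely by how $c_k$ and $c_\ell$ lie relative to one another. Pairwise-disjoint chords (e.g.\ $c_k=p_{2k}p_{2k+1}$) give the cyclic pattern of $\{p_1,p_2,p_3\},\{p_1,p_4,p_5\}$, which is $\bat$; pairwise-nested chords (e.g.\ $c_k=p_kp_{n+2-k}$) give the pattern of $\{p_1,p_2,p_5\},\{p_1,p_3,p_4\}$, which is $\nested$; pairwise-crossing chords (e.g.\ $c_k=p_kp_{k+\lfloor n/2\rfloor}$) give the pattern of $\{p_1,p_2,p_4\},\{p_1,p_3,p_5\}$, which is $\crossing$. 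Each family has size $\Omega(n)$. The $\taco$ case is even simpler: the $n-2$ triangles $\{p_1,p_2,p_i\}$ for $3\le i\le n$ all share the hull edge $p_1p_2$, and since every other point lies on the same side of that edge, every pair forms $\taco$ rather than $\mariposa$ (using the characterization of $\mariposa$ implicit in \lemref{xcup}).

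For the three vertex-disjoint types I would use: the $\lfloor n/3\rfloor$ consecutive triples $\{p_{3k+1},p_{3k+2},p_{3k+3}\}$ for $\ears$ (any two lie on disjoint arcs, giving the pattern $000111$); the long skinny triangles $\{p_{2k-1},p_{2k},p_{\lfloor n/2\rfloor+2k-1}\}$ for $\swords$ (any two give, up to rotation and reflection, the pattern $001011$); and, writing $n=6m$, the triangles $\{p_j,p_{j+2m},p_{j+4m}\}$ for $1\le j\le 2m$ for $\david$ (any two interleave completely, giving the pattern $010101$, the Star of David). In each case a short inspection confirms that the six vertices are distinct and that the cyclic pattern is as claimed; values of $n$ not divisible by the relevant small constant are handled by discarding $O(1)$ points, which does not affect the $\Omega(n)$ bound.

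The work here is bookkeeping rather than difficulty; the one place that needs care is matching each construction's combinatorial type to the intended named configuration, since one must fix the conventions distinguishing $\taco$ from $\mariposa$ and $\ears$ from $\swords$ from $\david$. Once these correspondences are pinned down (the overlapping, side-sharing edge pair is $\taco$, consistent with \lemref{xcup}; the fully interleaved disjoint pair is $\david$), every remaining verification is a routine check of a cyclic word of length at most six, and the theorem --- together with the several ``$n^*$'' lower bounds it supplies in \tabref{bigtable} --- follows.
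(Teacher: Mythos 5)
Your proposal is correct and follows essentially the same route as the paper: for each configuration $x\notin X$ it exhibits an explicit $\Omega(n)$-sized family of triangles that pairwise form $x$, and your seven constructions coincide (up to minor re-indexing in the $\swords$ and $\david$ cases) with those shown in \figref{linear-lower}. The only addition is your explicit discussion of the $\taco$/$\mariposa$ and $\ears$/$\swords$/$\david$ conventions, which the paper leaves as a ``straightforward to verify'' check.
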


\begin{proof}
  Let $x\in\{\taco,\bat,\nested,\crossing,\ears,\swords,\david\}$ be
  a configuration not in $X$.  Label the vertices of our convex $n$-gon
  $1,\ldots,n$ in counterclockwise order.  Depending on the value of $x$,
  we use one of the following constructions (see \figref{linear-lower}):
  \begin{figure}
    \centering{
      \begin{tabular}{ccccccc}
       \includegraphics{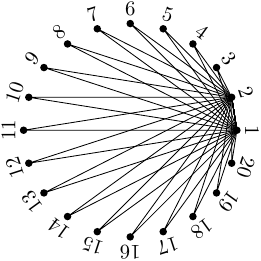} &
       \includegraphics{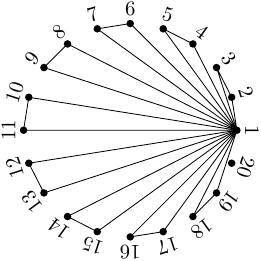} &
       \includegraphics{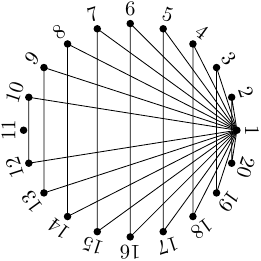} \\
       \taco & \bat & \nested \\[1em]
       \includegraphics{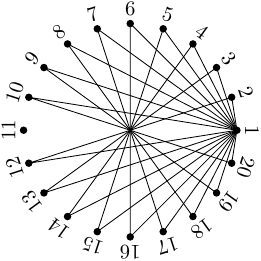} & 
       \includegraphics{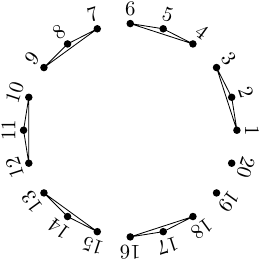} &
       \includegraphics{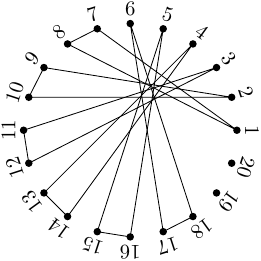} \\
       \crossing & \ears & \swords \\[1em]
       &
       \includegraphics{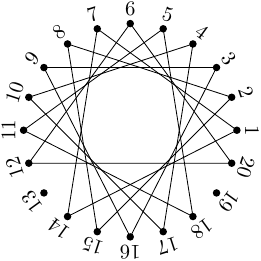} \\
       & \david
      \end{tabular}
    }
    \caption{Constructions used in the proof of \thmref{linear-lower}.}
    \figlabel{linear-lower}
  \end{figure}
 \begin{enumerate}
    \item For $x=\taco$, we use the set of triangles $\{(1,2,i):i\in\{3,\ldots,n\}\}$.
    \item For $x=\bat$, we use the set of triangles $\{(1,2i,2i+1): i\in\{1,\ldots,\lfloor n/2\rfloor-1 \}\}$.
    \item For $x=\nested$, we use the set of triangles $\{(1,i,n+2-i): i\in\{2, \ldots, \lfloor n/2\rfloor\}\}$.
    \item For $x=\crossing$, we use the set of triangles $\{(1,i,\lfloor n/2\rfloor+i): i\in\{2,\ldots, \lfloor n/2\rfloor\}\}$.
    \item For $x=\ears$, we use the set of triangles $\{(3i-2,3i-1,3i): i\in\{1,\ldots, \lfloor n/3\rfloor\}\}$.
    \item For $x=\swords$, we use the set of triangles $\{(i,\lfloor n/3\rfloor+2i-1,\lfloor n/3\rfloor+2i): i\in\{1,\ldots, \lfloor n/3\rfloor\}\}$.
    \item For $x=\david$, we use the set of triangles $\{(i,\lfloor n/3\rfloor+i,\lfloor 2n/3\rfloor+i): i\in\{1,\ldots, \lfloor n/3\rfloor\}\}$.
 \end{enumerate}
 In each case, the size of the set is $\Omega(n)$ and it is
 straightforward to verify that each pair of triangles in the set forms
 the configuration $x$ and therefore avoids all configurations in~$X$.
\end{proof}

\section{Points of View}
\seclabel{points-of-view}

In this section we describe different, but equivalent (up to a logarithmic factor), views of the problem.  One of these (the dot puzzle view) will be our main line of attack for the most difficult cases.

\subsection{The Top/Bottom View}

It will be helpful to gain a sense of orientation by considering
a top/bottom variant of $\ex(n,X)$ that is defined as follows (see
\figref{top-bottom}).  Partition the vertices of a convex $n$-gon using
a horizontal line into a \emph{top half} of size $\lceil n/2\rceil$
and a \emph{bottom half} of size $\lfloor n/2\rfloor$.  We define
$\ex'(n,X)$ analogously to $\ex(n,X)$ except that we only count triangles
having one vertex in the bottom half and two vertices in the top half.
When studying $\ex'$, each triangle we count has a naturally defined
\emph{bottom vertex} in the bottom half and a \emph{left vertex} and
\emph{right vertex}, each in the top half.

\begin{figure}
  \centering{
    \includegraphics{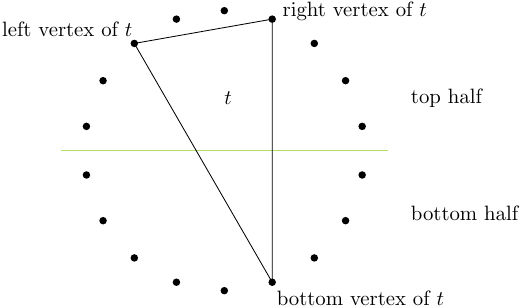}
  }
  \caption{$\ex'$ only counts triangles with two vertices in the top half
     and one vertex in the bottom half.}
  \figlabel{top-bottom}
\end{figure}

Clearly $\ex(n,X)\ge\ex'(n,X)$.  The following lemma shows that, without
losing much precision, we can also upper bound $\ex(n,X)$ by $\ex'(n,X)$.

\begin{lem}\lemlabel{top-bottom}
  If $\ex'(n,X)\in O(n^c)$, then
  \[
     \ex(n,X)\in 
        \begin{cases} 
            O(n^c)     & \text{if $c>1$,} \\
            O(n\log n) & \text{if $c=1$.}
        \end{cases}
  \]
\end{lem}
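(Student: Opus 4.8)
The plan is to reduce the general problem to the top/bottom problem by a random partitioning argument together with a recursive decomposition. Let $S$ be a set of triangles on a convex $n$-gon avoiding all configurations in $X$, with $|S|=\ex(n,X)$. The first idea is to split the $n$ vertices by a line into two contiguous halves $T$ (top) and $B$ (bottom) of sizes $\lceil n/2\rceil$ and $\lfloor n/2\rfloor$. Every triangle of $S$ falls into one of four types according to how its three vertices distribute among $T$ and $B$: three in $T$, three in $B$, two in $T$ and one in $B$, or one in $T$ and two in $B$. The triangles of the last two types are exactly the ones counted by $\ex'$ on the whole point set (after possibly relabelling which half is ``top''), so there are at most $2\ex'(n,X)$ of them; since $S$ avoids $X$ restricted to these triangles, this is $O(n^c)$ when $c>1$ and $O(n)$ when $c=1$. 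The triangles with all three vertices in $T$ form a set avoiding $X$ on a convex point set of size $\lceil n/2\rceil$, hence there are at most $\ex(\lceil n/2\rceil,X)$ of them, and similarly for $B$.

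This yields the recurrence $\ex(n,X)\le \ex(\lceil n/2\rceil,X)+\ex(\lfloor n/2\rfloor,X)+2\ex'(n,X)$. Unrolling it over $\lceil\log_2 n\rceil$ levels: at level $i$ we have $2^i$ subproblems each of size roughly $n/2^i$, contributing in total at most $2\sum_i 2^i\cdot\ex'(n/2^i,X)$. If $\ex'(m,X)\in O(m^c)$ with $c>1$, then $2^i\cdot(n/2^i)^c = n^c/2^{i(c-1)}$, and summing the geometric series over $i\ge 0$ gives $O(n^c)$, as claimed. If $c=1$, then $2^i\cdot(n/2^i)=n$ at every level, and there are $O(\log n)$ levels, giving $O(n\log n)$. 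The base case $n=O(1)$ contributes only $O(1)$ per leaf and there are $O(n)$ leaves, i.e. $O(n)$, which is absorbed in both cases.

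One technical point to be careful about: the ``top/bottom'' split in the definition of $\ex'$ uses a \emph{horizontal} line and fixed half-sizes, whereas in the recursion we cut a convex polygon by an arbitrary chord into two contiguous arcs. Since the point set is in convex position, any such chord can be realized as a horizontal cut after an affine (or projective) transformation that preserves convex position and the combinatorial type of every pair of triangles, so $\ex'$ applies verbatim to each cut; alternatively one simply notes that $\ex'(n,X)$ as defined is, up to the additive constant coming from rounding, an upper bound on the number of $X$-free triangles with exactly one vertex on one side and two on the other side of \emph{any} balanced contiguous split. I expect this bookkeeping — making sure the recursive subproblem is genuinely an instance of the same quantity, and that the cross-triangles at each level are genuinely counted by $\ex'$ — to be the only real subtlety; the arithmetic of the recurrence is routine. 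The argument uses nothing beyond the monotonicity $\ex(n,X)\le\ex(n,Y)$ for $Y\subseteq X$ (trivial) and the hypothesis $\ex'(n,X)\in O(n^c)$.
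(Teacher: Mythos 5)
Your proposal is correct and follows essentially the same route as the paper: split the convex polygon into a top and bottom half, bound the two ``crossing'' types of triangles by $O(\ex'(n,X))$, recurse on each half, and solve the resulting recurrence (geometric series for $c>1$, $O(\log n)$ levels for $c=1$). The only difference is cosmetic --- you spell out the unrolling of the recurrence and the remark about realizing an arbitrary balanced chord as a horizontal cut, both of which the paper leaves implicit.
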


\begin{proof}
   Let $S$ be a set of triangles that avoids $X$.  Every triangle in $S$
   is of one of the following types:
   \begin{enumerate}
      \item It has one vertex in the top half and two in the bottom half;
        there are $O(n^{c})$ such triangles.
      \item It has two vertices in the top half and one in the bottom
        half; there are $O(n^{c})$ such triangles.
      \item It has all three vertices in the top half; there are at most
        $\ex(\lceil n/2\rceil,X)$ such triangles.
      \item It has all three vertices in the bottom half; there are at
        most $\ex(\lfloor n/2\rfloor,X)$ such triangles.
   \end{enumerate}
   Thus, we obtain the recurrence inequality
   \[  \ex(n,X) \le O(n^{c}) + \ex(\lceil n/2\rceil,X) + \ex(\lfloor n/2\rfloor,X) \borisspace ,\]
   which solves to $O(n^c)$ for $c>1$ and $O(n\log n)$ for $c=1$ \cite[Section~4.3]{cormen.leiserson.ea:introduction}.
\end{proof}

\subsection{The Dot Puzzle View}

The top-bottom version of the problem gives us a sense of orientation,
but it is still difficult to visualize the sets of triangles obtained
this way. Next, we show that there is a corresponding puzzle that is
easy to visualize.  Refer to \figref{point-view}.  

In this puzzle, we are given $\binom{n}{2}$ points,
\[
    Q = \{(x,y): y\in\{1,\ldots,n-1\}, x\in\{y+1,\ldots,n\} \} \borisspace .
\]
These points model the top/bottom view on a convex $2n$-gon, where the
point $(x,y)$ represents a triangle whose vertices are some point on
the bottom and the $x$th and $y$th points on the top, where the top
vertices are labelled $1,\ldots,n$ from left to right.

\begin{figure}
   \centering{
      \includegraphics{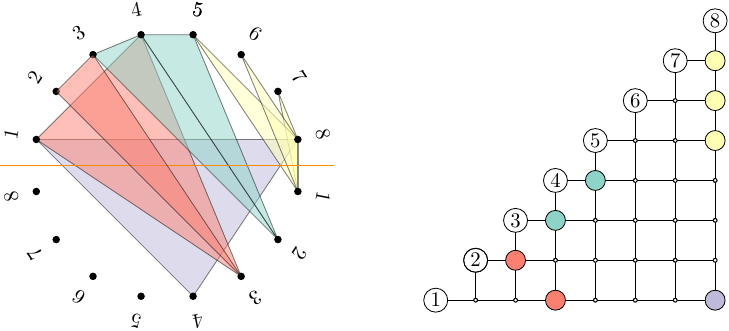}
   }
   \caption{The dot puzzle view of the top/bottom view. In this example,
     four rounds of the Dot Puzzle have been played.}
   \figlabel{point-view}
\end{figure}

The dot puzzle proceeds in $n$ rounds and during the $i$th round, the
player selects a set $Q_i\subseteq Q$ subject to certain constraints
that depend on the points selected in rounds $1,\ldots,i-1$.  In the
top/bottom view, the $i$th round determines which pairs of top vertices
form a triangle with the $i$th bottom vertex, where the bottom
vertices are labelled $1,\ldots,n$ from right to left.  

Of course, the constraints on which points can be selected during
round $i$ depend on the set of forbidden configurations and the set
$\bigcup_{j=1}^{i-1} Q_j$ of points played during previous rounds.
By proving bounds on $\sum_{i=1}^n |Q_i|$ we obtain bounds on the maximum
number of triangles obtained in the top-bottom view on a set of $2n$
points, i.e., bounds on $\ex'(2n, X)$.

\Figref{forbidden-colour}(a) shows restrictions on the locations of points
placed during a single round.  It is interpreted as follows:  If the
central point, $p=(x,y)$, is placed during round $i$, and we wish to
avoid some particular configuration, $c$, then we should not place any
points in the parts of the figure that have label $c$.  For instance,
if we wish to avoid the $c=\taco$ configuration, then we
should not place any points in the same row or column as $p$; such a point
creates a $\taco$ configuration in which the shared edge
joins a bottom vertex to a left (same row) or right (same column) vertex.

\Figref{forbidden-colour}(b) shows the restrictions on the locations
of points placed in subsequent rounds.  Its interpretation is similar
\figref{forbidden-colour}(a). For example, if we wish to avoid a
$\nested$ configuration and we place the central point, $p$, during round
$i$, then, in every round $j>i$, we should not place any point directly
to the left or directly below $p$.  Any such point creates 
a $\nested$ configuration in which the shared vertex is the left vertex
(to the left of $p$) or the right vertex (below $p$) of both triangles.

\begin{figure}
   \centering{
      \newlength{\ka}
      \setlength{\ka}{\textwidth}
      \begin{tabular}{c@{\hspace{1cm}}c}
        \includegraphics{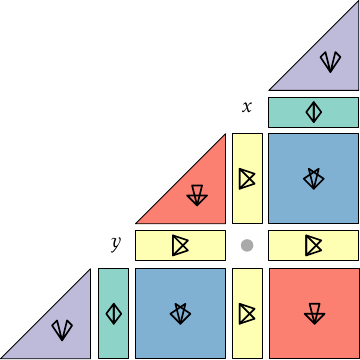} & 
        \includegraphics{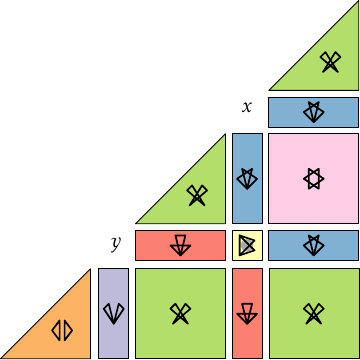} \\[1em]
        \includegraphics{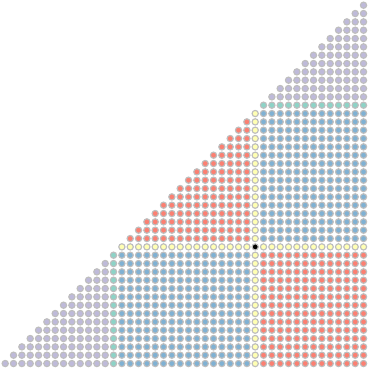} &
        \includegraphics{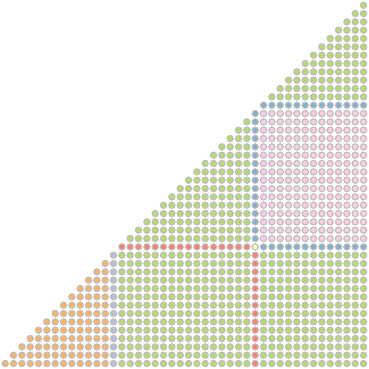} \\
        (a) & (b)
      \end{tabular}
   }
   \caption{The regions killed by forbidden configurations during (a)~the current round and (b)~subsequent rounds.}
   \figlabel{forbidden-colour}
\end{figure}

For any
$X\subseteq\{\taco,\mariposa,\bat,\nested,\crossing,\ears,\swords,\david\}$
and any $S\subseteq Q$, we define $\killed(X,S)$ as the subset of $Q$ that
can no longer be played in the dot puzzle game (for configurations in
$X$) if the points in $S$ have been played in previous rounds (these are
the points of $Q$ $\kappa$illed by $S$).  We use the complementary notation
$\survivors(X,S)=Q\setminus\killed(X,S)$ to be the subset of points in
$Q$ that can still be played in the dot puzzle game if the points is $S$
have been played in previous rounds.

\subsection{Some Warm-Up Exercises}

For the remainder of the paper, we will study $\ex'$ using the
dot puzzle view. Thus, all of our results are bounds on solutions to
these dot puzzles.

We say that a point set is \emph{non-decreasing} (respectively,
non-increasing) if, when sorted lexicographically, the $y$-coordinates of
the points form a non-decreasing (respectively, non-increasing) sequence.
A point set is \emph{increasing} (respectively, decreasing) if it is
non-decreasing (respectively, non-increasing) and no two of its points
points have the same $x$-coordinate or the same $y$-coordinate.

From \figref{forbidden-colour}, some previous upper bounds naturally fall
out.  Consider Bra\ss's results \cite{brass:turan} that $\ex(n,\{\nested\})\in
O(n^2)$.  For the game defined by $X=\{\nested\}$, we have the rules:\\
\centerline{\includegraphics[height=3cm]{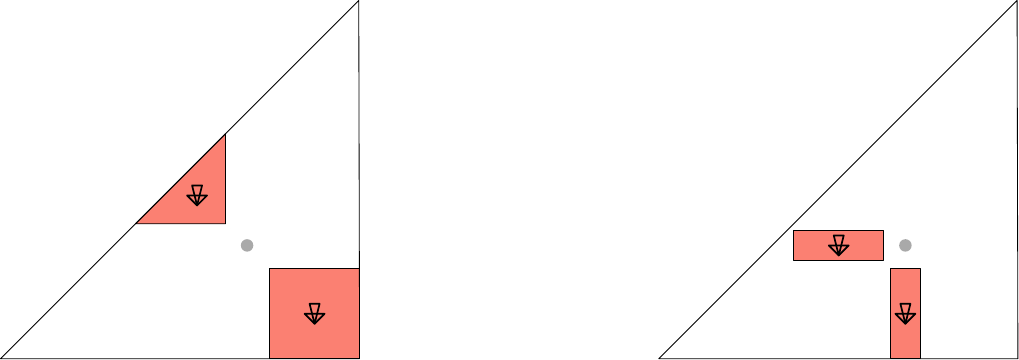}}
In particular, these rules imply that points selected during a
single round of the dot puzzle must be non-decreasing, and thus
at most $2n-3$ points can be selected take part in $Q_i$.  Thus
$\sum_{j=1}^{n}|Q_i| \le 2n^2-3n$, so $\ex'(n,\{\nested\})\in O(n^2)$
and the bound $\ex(n,\{\nested\})\in O(n^2)$ immediately follows from
\lemref{top-bottom}.

Similarly, we can almost recover the result $\ex(n,\{\ears, \swords,
\bat,\nested\})\le n$ of Bra\ss, Rote and
Swanepoel \cite{brass.rote.ea:triangles}.  Here, the rules are:\\
\centerline{\includegraphics[height=3cm]{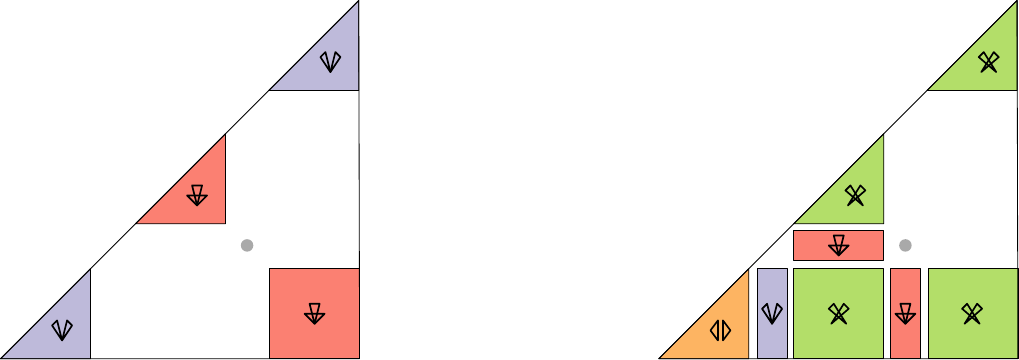}}
The rule for $\nested$ ensures that the set of points taken during a
single round form a non-decreasing point set.  The rules for points
allowed in subsequent rounds ensure that, after round $i$ any points
chosen are not below or to the left of the topmost-rightmost point
in $Q_i$.  Taken together, these rules imply that
\[
    \ex'(n,\{\ears,\swords,\bat,\nested\})\le \sum_{i=1}^n|Q_i| \le 3n-4 \borisspace ,
\]
since the union of $Q_i$ is a non-decreasing point set (whose size is
therefore at most $2n-3$), and each $Q_i$ shares at most one point with
$Q_{i+1}$.  The bound $\ex(n,\{\ears, \swords, \bat,\nested\})\in
O(n\log n)$ then follows from \lemref{top-bottom}.

\section{Results Based on Dot Puzzles}
\seclabel{new-results}

After this warm-up, and armed with the dot puzzle view, we are ready to prove
some new results.  We begin by proving several linear upper bounds.
From this point on, each proof of a theorem will begin with a picture,
similar to \figref{forbidden-colour}, that shows the rules of the dot puzzle
considered by the theorem.  For a point $q\in Q$, we use the notations
$\x(q)$ and $\y(q)$ to denote the x- and y-coordinates of $q$.

\subsection{Linear Upper Bounds}

\begin{thm}\thmlabel{taco-nested-crossing}
  $\ex'(n,\{\taco,\nested,\crossing\}) \in O(n)$.
\end{thm}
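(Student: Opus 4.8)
The plan is to analyze the dot puzzle for $X=\{\taco,\nested,\crossing\}$ and show that the total number of points played, $\sum_{i=1}^n|Q_i|$, is $O(n)$; the bound on $\ex(n,X)$ then follows from \lemref{top-bottom} (with $c=1$ this gives $O(n\log n)$, but since we only claim $O(n)$ for $\ex'$, we state the result for $\ex'$ as in the theorem). First I would draw the rule picture from \figref{forbidden-colour}: the $\taco$ rule forbids, within a single round, any point in the same row or column as a played point; the $\nested$ rule forbids, in the same round, points that are both to the left and below (so a single round's point set must be \emph{increasing}, in the strong sense of strictly increasing in both coordinates); and the $\crossing$ rule kills, in all subsequent rounds, the points in the quadrant that would form a crossing configuration. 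Combining the $\taco$ and $\nested$ same-round rules, each $Q_i$ is an increasing point set with all distinct $x$- and $y$-coordinates.

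The key step is to understand how rounds interact. I expect that the $\crossing$ (and residual $\taco$) kill-regions from round $i$ eliminate, in every later round, everything weakly below-and-left, or weakly in the same row/column, of each point of $Q_i$. So after round $i$ the only surviving points lie strictly above-and-right of the "staircase" traced out by $\bigcup_{j\le i}Q_j$. This is the same phenomenon as in the warm-up for $\{\ears,\swords,\bat,\nested\}$: the union $\bigcup_i Q_i$ is forced to be a single increasing (staircase) point set, hence has at most $2n-3$ points — actually at most $n-1$ points if coordinates within the union must all be distinct, but in any case $O(n)$. The one subtlety is consecutive rounds possibly sharing a point or a coordinate line; as in the warm-up, $Q_i$ and $Q_{i+1}$ overlap in at most $O(1)$ points, so the total is still $O(n)$. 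Therefore $\sum_{i=1}^n|Q_i|\in O(n)$, giving $\ex'(n,\{\taco,\nested,\crossing\})\in O(n)$.

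The main obstacle is verifying precisely which cells of \figref{forbidden-colour} are labelled by each of $\taco$, $\nested$, $\crossing$ — i.e., checking that the union of the three same-round-and-subsequent-round kill regions really does force $\bigcup_i Q_i$ to be (essentially) a single increasing point set, rather than, say, allowing one "fresh" increasing chain to be started in a region untouched by earlier rounds. I would handle this by a careful case analysis of a putative point $q\in Q_j$ played in round $j$: if there is any earlier point $p$ played in round $i<j$ with $\x(p)\ge\x(q)$ or $\y(p)\le\y(q)$ (in the appropriate closed sense), then $q$ is killed by the $\taco$/$\nested$/$\crossing$ rule applied to $p$; so every newly played point must be strictly up-and-right of all previously played points, which is exactly the staircase condition. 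Once that structural claim is pinned down, the counting is the routine $2n-3$ bound for increasing point sets plus the $O(1)$-overlap-per-consecutive-round accounting, and \lemref{top-bottom} finishes the argument.
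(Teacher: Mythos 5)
Your final bound is correct and you correctly extract the within-round structure (each $Q_i$ is increasing, by the \taco\ and \nested\ rules), but the cross-round structural claim at the heart of your argument is false --- and it sits exactly at the step you flagged as "the main obstacle." The region killed by a point $p$ played in round $i$ for all rounds $j>i$ is \emph{not} a quadrant: a later point $q$ lying in none of the rows/columns determined by $p$ corresponds to a triangle sharing no vertex with $p$'s triangle (the bottom vertices already differ), so the pair forms one of \ears, \swords, \david, none of which is forbidden by $X=\{\taco,\nested,\crossing\}$. Concretely, playing $(10,1)$ in round $1$ and $(6,5)$ in round $2$ is legal, yet $(6,5)$ lies strictly above and to the left of $(10,1)$, so $\bigcup_i Q_i$ is not an increasing staircase and the $2n-3$ count for increasing sets does not apply to it. The analogy with the $\{\ears,\swords,\bat,\nested\}$ warm-up is precisely what misleads you: there it is the forbidden \ears\ and \swords\ configurations that confine later rounds to a quadrant, and those configurations are available to the player here. (Your own kill condition ``$\x(p)\ge\x(q)$ or $\y(p)\le\y(q)$'' also contradicts your stated conclusion that survivors lie up-and-right of $p$; as written it would place them down-and-right.)

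The repair is simpler than what you attempt and is what the paper does. Across rounds, the union of the \taco, \nested, and \crossing\ rules does kill the entire row and column of every previously played point: a later point in the same row or column shares a top vertex with $p$'s triangle, and one checks that the resulting configuration is a \nested\ on one side of $p$, a \crossing\ on the other, and a \taco\ at $p$ itself. Since each $Q_i$ is increasing it uses each row at most once, and no later round may reuse that row, so every point ever played is charged to a distinct row and $\sum_{i=1}^n |Q_i|\le n$ follows directly. No claim about the global shape of $\bigcup_i Q_i$ is needed, and neither is your $O(1)$-overlap accounting for consecutive rounds, since \taco\ already forbids replaying a point.
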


\begin{proof}
  \centerline{\includegraphics[height=3cm]{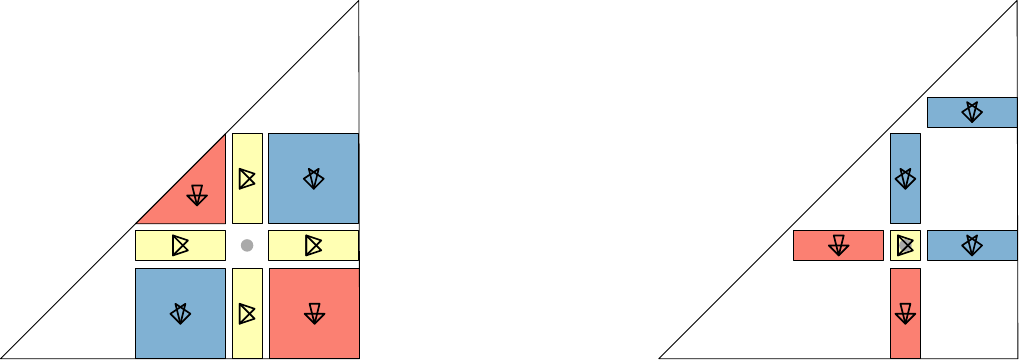}}
  Taking the union of the rules for $\taco$, $\nested$, and $\crossing$,
  we obtain the rule which ensures that during subsequent rounds we can
  not take a point from any column or row used in a previous round.
  The rules for $\taco$ ensure that the set of points taken in each $Q_i$
  includes at most one point in each row (or column).
  Therefore each new point played
  can be charged to a unique row, so the total number of points played
  is at most $n$.
\end{proof}

\begin{thm}\thmlabel{nested-crossing-ears}
  $\ex'(n,\{\nested,\crossing,\ears\})\in O(n)$.
\end{thm}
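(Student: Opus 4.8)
The plan is to begin the proof with the usual rules picture for $X=\{\nested,\crossing,\ears\}$ (the union of the three individual rules), and then to run a charging argument on $\sum_i|Q_i|$. Reading off the picture, the two facts I would extract and use are: \textbf{(R1)} within a single round the chosen points, interpreted as the closed intervals $[\y(q),\x(q)]$ on the $n$ top vertices, are pairwise non-crossing, and whenever one of them contains another the two share an endpoint; in particular, as a chord diagram of the convex polygon on the top vertices, $Q_i$ is outerplanar, so $|Q_i|\le 2n-3$. And \textbf{(R2)} once a point $p=(x,y)$ has been played in round $i$, every later round must avoid all of row $y$ (except $p$ itself), all of column $x$ (except $p$), the entire row indexed by $x$ (the points $(x',x)$), and every point whose $\x$-coordinate is less than $y$.

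From (R2) I would derive three bookkeeping facts. First, each column is ``first used'' in a well-defined round; if that round plays two or more points in the column, the column is dead forever afterwards, while if it plays exactly one, that point is the only point of the column ever playable again. Hence $\sum_i|Q_i| = |U| + (\#\text{ replays})$, where $U$ is the set of distinct points ever played and a replay is a point-round $(p,i)$ with $p$ already played earlier; moreover (the same reasoning applied to rows) every replayed point is the unique point of a ``thin'' column. Second --- monotone progress --- letting $\Lambda_i$ be the largest $\y$-coordinate used in rounds $1,\dots,i$, the sequence $\Lambda_i$ is non-decreasing and every point played in round $i+1$ or later has $\x$-coordinate at least $\Lambda_i$; calling round $j$ a \emph{leap round} when $\Lambda_j>\Lambda_{j-1}$, there are at most $n$ leap rounds, and in every non-leap round all chosen intervals contain the fixed top vertex $\Lambda_{j-1}$. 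Third, combining (R1) with ``all intervals contain a common vertex'': such a family is totally ordered by inclusion and its members all share an endpoint, so a non-leap round is a \emph{fan} --- either all its intervals share their left endpoint (so all lie in one row) or all share their right endpoint (so all lie in one column).

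With these in hand the argument should close as follows. To bound $|U|$, split $U$ by column: the at most $n$ columns used exactly once in their first-use round contribute $O(n)$; a column used $\mu\ge2$ times in its first-use round consists of $\mu$ intervals with a common right endpoint, hence nested, hence (by (R1)) sharing their left endpoints, so by the cross-column non-crossing condition the fat columns used in one round occupy essentially disjoint index ranges --- and, crucially, the $\mu$ points of such a fat column permanently kill the $\mu$ distinct rows of their left endpoints (each fully, because the would-be survivor of such a row lies in the now-dead column) together with the row indexed by that column, while a row is killed in only one round; so these killed rows are globally disjoint up to a constant factor, giving $\sum_{\text{fat columns}}\mu=O(n)$ and $|U|=O(n)$. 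To bound the replays: a non-leap round is a fan, a fan with $\ge2$ points first-uses (and kills) a fresh row or column so all its points are fresh and it contributes no replay, and a fan with one point contributes at most one replay, so the at most $n$ non-leap rounds contribute $O(n)$ replays; for the at most $n$ leap rounds, the leaping interval $[\ell^*,r^*]$ with $\ell^*>\Lambda_{j-1}$ kills every column of index $<\ell^*$ (last clause of (R2)), so all but $O(\ell^*-\Lambda_{j-1})$ of that round's replayed points lie in columns that survive the leap, and accounting these against $\sum(\Lambda_j-\Lambda_{j-1})\le n$ and against $|U|$ bounds the leap rounds' replays by $O(n)$. Therefore $\sum_i|Q_i|=O(n)$, which is exactly $\ex'(2n,X)$, so $\ex'(n,X)\in O(n)$.

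The main obstacle is this last piece of bookkeeping. Because $\taco$ is not forbidden, a single round may legitimately contain $\Theta(n)$ points (a fan of intervals sharing one endpoint), so --- unlike in the proof of $\thmref{taco-nested-crossing}$ --- one cannot simply charge each point of a round to a distinct fresh row. The real content is showing that a ``wide'' round must permanently kill a proportional number of rows and columns, while the monotone-progress invariant forces all but boundedly many points of each non-leap round to be fresh, so that everything telescopes to $O(n)$; making the interaction between wide rounds, fresh points, and replays precise is where the work lies.
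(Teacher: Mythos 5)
Your overall strategy (bound the number of distinct points $|U|$, then bound replays via a monotone-progress invariant) is workable, but the structural claim it hinges on is false as stated. You assert that in a non-leap round the intervals, all containing the vertex $\lambda=\Lambda_{j-1}$, are ``totally ordered by inclusion'' and hence form a fan lying in a single row or a single column. Two non-crossing intervals that both contain $\lambda$ need not be nested: $[y,\lambda]$ and $[\lambda,x]$ with $y<\lambda<x$ meet only at $\lambda$, and the corresponding triangles share the edge from the bottom vertex of that round to top vertex $\lambda$, i.e.\ they form a $\mariposa$ (and across rounds a $\bat$), neither of which is forbidden here. Concretely, if $p^*=(x^*,\lambda)$ was played earlier, a later non-leap round may legally contain the replay $p^*$ together with several fresh points $(\lambda,y_1),(\lambda,y_2),\dots$ in column $\lambda$; this round is not contained in any single row or column. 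Your replay count for non-leap rounds (``a fan with $\ge 2$ points is all fresh, a fan with one point gives one replay'') therefore does not apply as written. The argument can be repaired --- one can show a non-leap round is contained in $\{p^*\}\cup(\text{column }\lambda)$, hence contributes at most two replays --- but that requires an extra case analysis you have not supplied, and your leap-round accounting (``accounting these against $\sum(\Lambda_j-\Lambda_{j-1})$ and against $|U|$'') is too compressed to check; in particular you never justify why the replays of a leap round sitting in columns $\ge\Lambda_j$ are only $O(1)$ per round, which needs the same-round $\nested$/$\crossing$ rules applied against the leaping interval.

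For comparison, the paper's proof avoids all of this bookkeeping. It first observes that if some $Q_i$ contains $k\ge 2$ points in one column (or row), those $k$ points permanently kill the $k$ rows (or columns) they occupy, so all such points contribute at most $2n$ in total and one may assume each $Q_i$ is an increasing set. Then $S=\bigcup_i Q_i$ has at most one point per row, so $|S|\le n$, and the $\ears$ rule forces the top-rightmost point $p$ of each increasing $Q_i$ to satisfy $\killed(\{\ears\},\{p\})\supseteq Q_i$ minus its last two points, whence $\sum_i(|Q_i|-2)\le|S|$ and $\sum_i|Q_i|\le 3n$. Your kill-fact (R2), fourth clause, is exactly this $\ears$ rule; routing it through the global $\Lambda_i$ invariant rather than applying it once per round to the extreme point of $Q_i$ is what creates the complications above.
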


\begin{proof}
  \centerline{\includegraphics[height=3cm]{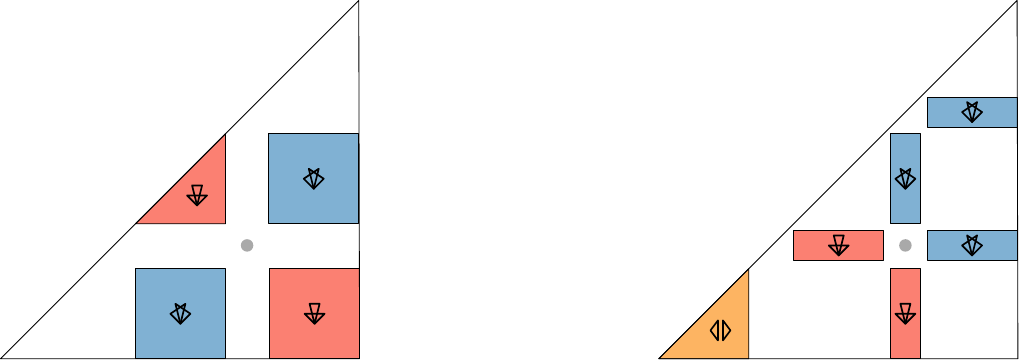}}
  Observe that if $Q_i$ contains $k>1$ points, $p_1,\ldots,p_k$ in a single column
  (or row), then each of the $k$ rows (or columns) containing one of these
  points is completely covered by $\killed(\{\nested,\crossing\},\{p_1,\ldots,p_k\})$, i.e., $Q_{i+1},\ldots,Q_n$ can not contain any points in
  these rows (or columns) (see \figref{nested-crossing-ears-i}).  
  Therefore, when summing $\sum_{i=1}^n |Q_i|$,
  the contribution of points that are not alone in their row or column
  is at most $2n$.  We therefore assume that each $Q_i$ contains at most
  one point from each row and column.  Since the rules for $\nested$ imply
  that $Q_i$ is non-decreasing, this implies that each $Q_i$
  is an increasing set of points.

  \begin{figure}[hbtp]
    \centering{
       \includegraphics{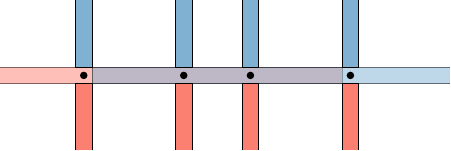}
    }
    \caption{A step in the proof of \thmref{nested-crossing-ears}.}
    \figlabel{nested-crossing-ears-i}
  \end{figure}

  Let $S=\bigcup_{i=1}^n Q_i$ and notice that $S$ contains at most
  one point from each row: each $Q_i$ contains at most one point
  in each row and the first time a point $p$ appears in some row,
  $\killed(\{\nested,\crossing\},\{p\})$ eliminates every other point
  from that row.  Therefore $|S|\le n$.  All that remains is to account for
  multiplicity; a single point in $S$ can appear in more than one $Q_i$.

  Now, because of the rules for $\crossing$, the condition that $Q_i$ is
  increasing is quite restrictive. In particular, if we consider the last
  (top rightmost) point, $p$, of $Q_i$, then it must be placed so that
  $\killed(\{\ears\}, \{p\})$ contains all of $Q_i$ except $p$ and the
  second-to-last point in $Q_i$ (see \figref{nested-crossing-ears}). That is,
  $Q_i$ contains $|Q_i|-2$ points that cannot appear in $Q_{i+1},\ldots,Q_n$. We can think of $Q_i$ as eliminating $|Q_i|-2$ points from $S$, so
  $\sum_{i=1}^n (|Q_i|-2) \le |S| \le n$,
  so $\sum_{i=1}^n |Q_i| \le 3n$.
  \begin{figure}[hbtp]
    \centering{
       \includegraphics{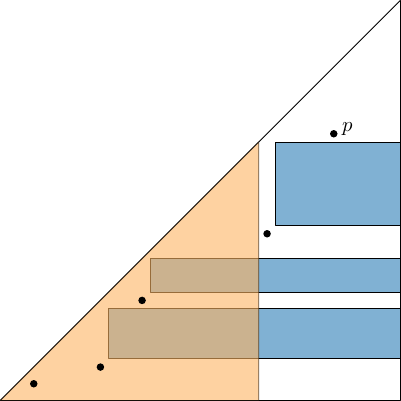}
    }
    \caption{Another step in the proof of \thmref{nested-crossing-ears}.}
    \figlabel{nested-crossing-ears}
  \end{figure}
\end{proof}

Our next four upper bounds depend on a simple lemma about forbidden
configurations of points.  We say that three points $a=(x_0,y_0)$,
$b=(x_0,y_1)$, and $c=(x_1,y_1)$ form a \emph{$\Gamma$-configuration} if
$y_0<y_1$ and $x_0<x_1$.

\begin{lem}\lemlabel{forbidden}
   Let $S$ be a subset of $\{1,\ldots,n\}^2$ with no three points $a$,
   $b$, and $c$ that form a $\Gamma$-configuration. Then $|S|\le 2n$.
\end{lem}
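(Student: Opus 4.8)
The plan is to bound $|S|$ by charging each point of $S$ to either a "first in its column" or "first in its row" event, using the $\Gamma$-freeness to show no column and no row can be charged twice.

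First I would set up the charging scheme. For each column $x\in\{1,\ldots,n\}$ that contains at least one point of $S$, let $b_x=(x,\beta_x)$ be the point of $S$ in column $x$ with the smallest $y$-coordinate (the bottom point of the column). Symmetrically, for each row $y$ that contains at least one point of $S$, let $r_y=(\rho_y,y)$ be the point of $S$ in row $y$ with the smallest $x$-coordinate (the leftmost point of the row). The set $B=\{b_x : \text{column } x \text{ nonempty}\}$ has at most $n$ points and the set $R=\{r_y : \text{row } y \text{ nonempty}\}$ has at most $n$ points. The goal is to show $S=B\cup R$, which immediately gives $|S|\le|B|+|R|\le 2n$.

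Next I would prove the key containment $S\subseteq B\cup R$. Suppose some point $c=(x_1,y_1)\in S$ is neither the bottom of its column nor the leftmost of its row. Then there is a point of $S$ strictly below $c$ in column $x_1$; in particular the column's bottom point $b_{x_1}=(x_1,y_0)$ has $y_0<y_1$. Likewise there is a point of $S$ strictly to the left of $c$ in row $y_1$; take $a'=(x_0,y_1)$ with $x_0<x_1$. Now consider the three points $a'=(x_0,y_1)$, $b'=(x_0,?)$… — here I need the third point to sit at $(x_0,y_1)$ and $(x_0,y_0)$, so instead I argue directly: the three points $c=(x_1,y_1)$, $a'=(x_0,y_1)$ (same row as $c$, to its left), and $b=(x_1,y_0)$ (same column as $c$, below it) are all in $S$, and they have $y_0<y_1$, $x_0<x_1$. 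Relabeling $a=(x_0,y_1)$? This is not quite the $\Gamma$ pattern, which requires two points sharing a column and two sharing a row in an "L" shape $a=(x_0,y_0)$, $b=(x_0,y_1)$, $c=(x_1,y_1)$. So the correct choice is: let $a$ be the bottom of column $x_0$ within… Actually the clean argument: among all points of $S$, pick $c=(x_1,y_1)\in S\setminus(B\cup R)$; since $c\notin R$, its row contains a point $(x_0,y_1)$ with $x_0<x_1$, and we may take $x_0$ as small as possible so $(x_0,y_1)=r_{y_1}\in R$; since $c\notin B$, its column contains a point $(x_1,y_0)$ with $y_0<y_1$. Then $r_{y_1}=(x_0,y_1)$ plays the role of $b$, and we need $a=(x_0,y_0')$ with $y_0'<y_1$ in column $x_0$; such a point exists only if column $x_0$ has a point below row $y_1$. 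To guarantee that, instead choose $c$ to be the point of $S\setminus(B\cup R)$ with smallest $y_1$, then smallest $x_1$; one checks the required lower-left witness then exists, yielding a $\Gamma$-configuration $(x_0,y_0),(x_0,y_1),(x_1,y_1)$, a contradiction.

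**The main obstacle** I expect is exactly this last bookkeeping: picking the extremal counterexample point $c$ so that the two witnesses (one below it, one to its left) can be combined into a genuine $\Gamma$-configuration — the naive three points form an "anti-L" rather than the "L" that $\Gamma$-freeness forbids, so the right extremal choice (minimize $y$ then $x$, or argue via the leftmost-lowest point) is essential. Once the containment $S\subseteq B\cup R$ is established, the bound $|S|\le|B|+|R|\le n+n=2n$ is immediate and the proof is complete.
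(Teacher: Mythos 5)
There is a genuine gap: the containment $S\subseteq B\cup R$ on which your whole argument rests is false when $R$ is taken to consist of the \emph{leftmost} points of the rows. Take $n=2$ and $S=\{(1,2),(2,1),(2,2)\}$. This set is $\Gamma$-free: the only column containing two points is $x=2$, and its top point $(2,2)$ has no point to its right in row $2$, so no point can serve as the $c$ of a $\Gamma$-configuration. Yet $(2,2)$ is neither the bottom of its column (because of $(2,1)$) nor the leftmost of its row (because of $(1,2)$). Your proposed extremal choice of the counterexample point cannot help: here $S\setminus(B\cup R)=\{(2,2)\}$ is forced, the leftmost point of its row is $(1,2)$, and column $1$ contains no point below row $2$, so the ``lower-left witness'' you need simply does not exist. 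The obstacle you flagged --- that the two naive witnesses form an anti-$L$ rather than a $\Gamma$ --- is real, and no ordering of the counterexamples removes it, because the claim $S=B\cup R$ is itself wrong for this choice of $R$.

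The repair is to replace ``leftmost of its row'' by ``rightmost of its row.'' If a point $b=(x_0,y_1)$ has a point $a=(x_0,y_0)$ below it in its column and a point $c=(x_1,y_1)$ to its right in its row, then $a$, $b$, $c$ form a $\Gamma$-configuration with $b$ as the corner; hence in a $\Gamma$-free set every point is the bottom of its column or the rightmost of its row, and $|S|\le n+n=2n$ follows with no extremal bookkeeping at all. This corrected version coincides with the paper's proof, which deletes the rightmost point of each row (at most $n$ points) and observes that each column of what remains can contain at most one point.
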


\begin{proof}
   If we remove the rightmost point from each row of $S$, then each
   column in what remains of $S$ contains at most one point. Otherwise, we
   could take $a$ to be the lowest point in a column, $b$ to be the highest
   point in the same column, and $c$ to be the removed rightmost point
   in $b$'s row.
\end{proof}

\begin{thm}\thmlabel{taco-nested-david}
  $\ex'(n,\{\taco,\nested,\david\})\in O(n)$.
\end{thm}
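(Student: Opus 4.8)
The plan is to work in the dot puzzle view for $X=\{\taco,\nested,\david\}$ and reduce everything to \lemref{forbidden}. First I would read the rules off the accompanying picture: within a single round, $\taco$ forbids two chosen points from sharing a row or a column; across rounds, $\taco$ forbids replaying a point (two such triangles share their common top edge with both apices on the same side of it, which is a $\taco$), $\nested$ forbids placing a point directly below or directly to the left of an already‑played point, and $\david$ forbids placing a point $(x',y')$ for which some already‑played point $(x,y)$ satisfies $y<y'<x<x'$. Since no point is ever replayed, $\sum_{i=1}^n|Q_i|=|S|$ for $S:=\bigcup_{i=1}^n Q_i$, so it suffices to prove $|S|\in O(n)$; and by \lemref{forbidden} it suffices to show that $S$ contains no $\Gamma$-configuration.

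The core of the argument is then to rule out a $\Gamma$-configuration $a=(x_0,y_0)$, $b=(x_0,y_1)$, $c=(x_1,y_1)$ in $S$ (so $y_0<y_1$ and $x_0<x_1$). The single geometric input is that every point of $Q$ lies strictly above the diagonal, so $b\in Q$ already forces $y_1<x_0$, hence $y_0<y_1<x_0<x_1$. Next I would look at the rounds $r_a,r_b,r_c$ in which $a,b,c$ were played. Since $a$ and $b$ share a column they lie in different rounds, and as $a$ is directly below $b$ the $\nested$ rule forces $r_a<r_b$; since $b$ and $c$ share a row they lie in different rounds, and as $b$ is directly to the left of $c$ the $\nested$ rule forces $r_b<r_c$. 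Hence $r_a<r_c$. But $c=(x_1,y_1)$ together with $a=(x_0,y_0)$ satisfies $y_0<y_1<x_0<x_1$, i.e., $c$ lies in exactly the region that the $\david$ rule kills once $a$ has been played, contradicting $r_c>r_a$. So $S$ is $\Gamma$-free, $|S|\le 2n$ by \lemref{forbidden}, and $\ex'(n,\{\taco,\nested,\david\})\le 2n\in O(n)$.

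I expect the only real obstacle to be recognizing that \lemref{forbidden} is the right tool — concretely, the observation that a $\Gamma$-configuration sitting inside $Q$ is automatically ``long enough'' (it has $y_1<x_0$) that its far corner $c$ lands inside the region killed by a $\david$ at its near corner $a$. Once that is in place, the round‑ordering deductions from the $\nested$ rule are immediate, and what remains is the routine bookkeeping of confirming the rules from the figure: that $\nested$ really kills the entire column segment below $p$ and the entire row segment to the left of $p$ in all later rounds, that replaying a point is a $\taco$, and that the region killed by a $\david$ at $(x,y)$ is precisely $\{(x',y'):y<y'<x<x'\}$.
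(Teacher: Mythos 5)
Your proposal is correct and follows essentially the same route as the paper: reduce to the dot puzzle, use $\taco$ to get $|S|=\sum_i|Q_i|$, show $S$ is $\Gamma$-free via the cyclic kill relations ($\nested$ on the $a,b$ and $b,c$ pairs, $\david$ on the $a,c$ pair, with $\taco$ ruling out equal rounds), and invoke \lemref{forbidden}. The only cosmetic difference is that you derive strict round inequalities up front and make explicit the observation $y_1<x_0$ (which the paper leaves to its figure); the substance is identical.
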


\begin{proof}
  \centerline{\includegraphics[height=3cm]{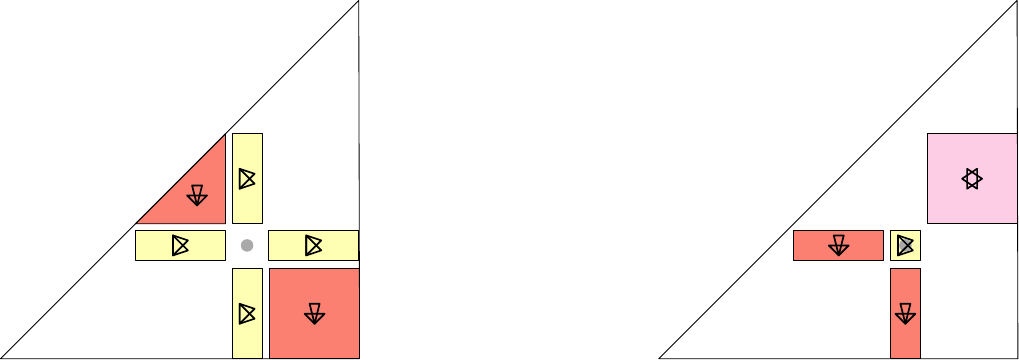}}
  Let $S=\bigcup_{i=1}^n Q_i$ be the set of points played in a solution
  to the resulting dot puzzle.  Note that, by the inclusion of $\taco$,
  each element of $S$ appears in exactly one $Q_i$, so $|S|=\sum_{i=1}^n
  |Q_i|$ is the quantity we are interested in bounding.

  Next, we claim that $S$ does not contain any three points $a$, $b$,
  and $c$ forming a $\Gamma$-configuration. Refer to \figref{forbidden}.
  Suppose, for the sake of contradiction, that this were not the
  case and that $a\in Q_i$, $b\in Q_j$, and $c\in Q_k$.  We have
  $a\in\killed(\{\nested\},\{b\})$ so $i\le j$ and the rules for $\taco$
  implies $i\neq j$, so $i<j$.  We also have $b\in\killed(\{\nested\},
  c)$ so $j\le k$.  Finally, we have $c\in\killed(\{\david\}, a)$, so
  $k \le i$.  Taken together, this gives the contradiction $i < j \le
  k \le i$.  Therefore, $S$ contains no $\Gamma$-configuration and applying \lemref{forbidden} then implies that $|S|\le 2n$.
  \begin{figure}[hbtp]
    \centering{
      \includegraphics{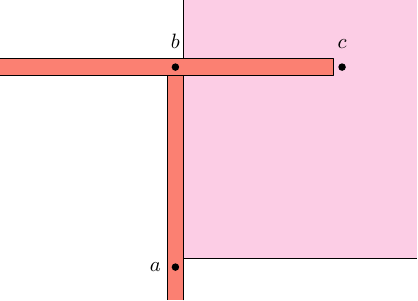}
    }
    \caption{The proof of \thmref{taco-nested-david}.}
    \figlabel{forbidden}
  \end{figure}
\end{proof}

\begin{thm}\thmlabel{crossing-swords}
  $\ex'(n,\{\crossing,\swords\}) \in O(n)$.
\end{thm}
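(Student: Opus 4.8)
The plan is to analyse the dot puzzle for $X=\{\crossing,\swords\}$ using the kill-regions in the opening figure together with \lemref{forbidden}; the one extra difficulty relative to \thmref{taco-nested-david} is that $\taco\notin X$, so a point of $Q$ may be played in several rounds, and this multiplicity must be handled separately. Three facts are read off from the figure. (i)~The single-round rule for $\crossing$ forbids $Q_i$ from containing an \emph{interleaved} pair $p,q$ with $\y(p)<\y(q)<\x(p)<\x(q)$. (ii)~The subsequent-round rule for $\crossing$ forbids, once $p=(x,y)$ has been played, every later point $(x,y')$ with $y<y'<x$ (the part of column $x$ above $p$) and every later point $(x',y)$ with $x'>x$ (the part of row $y$ to the right of $p$). (iii)~Two triangles played in distinct rounds share at most one, necessarily top, vertex; when they share none (disjoint top-vertex pairs), inspecting the six vertices in convex position shows the pair is a $\swords$ exactly when the \emph{first}-played triangle has its top interval entirely left of the other's, strictly containing it, or interleaving it from the right --- the two remaining sub-cases give $\ears$ and $\david$.

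Let $S=\bigcup_{i=1}^n Q_i$, and for $p\in S$ let $e(p)$ be the first round containing $p$. I claim $S$ has no $\Gamma$-configuration. Suppose $a=(x_0,y_0)$, $b=(x_0,y_1)$, $c=(x_1,y_1)$ with $y_0<y_1$ and $x_0<x_1$ all lie in $S$; since $b\in Q$, also $y_1<x_0$, so $y_0<y_1<x_0<x_1$. By (ii) applied to column $x_0$, once $a$ is played $b$ can never be played again, so $e(b)\le e(a)$; by (ii) applied to row $y_1$, likewise $e(c)\le e(b)$; and by (iii), $a$ and $c$ share no vertex and $c$'s interval $[y_1,x_1]$ interleaves $a$'s interval $[y_0,x_0]$ from the right, so playing $c$ before $a$ is a $\swords$, whence $e(a)\le e(c)$. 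Therefore $e(a)=e(b)=e(c)=:r$, so $a,b,c\in Q_r$; but then $\{a,c\}\subseteq Q_r$ is a single-round $\crossing$, contradicting (i). Hence $S$ has no $\Gamma$-configuration, and \lemref{forbidden} gives $|S|\le 2n$.

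It remains to bound the re-plays. The key observation is that for any two \emph{distinct} $p,q\in Q$, playing $p$ kills $q$ in every later round, or playing $q$ kills $p$ in every later round. If $p$ and $q$ share a top vertex --- so they lie in a common row, lie in a common column, or are joined end-to-end --- then one of the two round-orders is a $\crossing$, by (ii) in the first two cases and an analogous check in the third; if they share no top vertex, then whether their top intervals are disjoint, nested, or interleaved, one of the two round-orders is a $\swords$ by (iii) (in the interleaved case the surviving order is a $\david$, but then $p$ and $q$ cannot share a round either, by (i)). Consequently a solution never plays $p$, then some $q\ne p$, then $p$ again: the rounds strictly between the first and last occurrence of $p$ contain only copies of $p$. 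Writing $R(p)$ for the set of rounds containing $p$, it follows that the open intervals $(\min R(p),\max R(p))$ over points $p$ played more than once are pairwise-disjoint subsets of $\{1,\dots,n\}$.

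Finally, $\sum_{i=1}^n|Q_i|=\sum_{p\in S}|R(p)|$: points played exactly once contribute at most $|S|\le 2n$; and since a point played more than once satisfies $|R(p)|\le(\max R(p)-\min R(p)-1)+2$, summing the first terms over the disjoint open intervals (total length at most $n$) and the ``$+2$'' terms over at most $|S|$ points shows these points also contribute $O(n)$. Hence $\sum_i|Q_i|=O(n)$ and $\ex'(n,\{\crossing,\swords\})\in O(n)$. The real work is fact (iii) and the shared-vertex check in the third paragraph: one must go through every way two of these triangles can sit in convex position and decide which of the two round-orders yields a forbidden configuration.
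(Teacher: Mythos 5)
Your proof is correct, and its first half --- showing that $S=\bigcup_i Q_i$ contains no $\Gamma$-configuration via the cyclic chain of round inequalities forced by the two $\crossing$ kill rules and the $\swords$ rule, then invoking \lemref{forbidden} to get $|S|\le 2n$ --- is exactly the paper's argument. Where you diverge is in accounting for multiplicity. The paper does this locally: in each round it designates the point $p\in Q_i$ with minimum $x$-coordinate (ties broken by minimum $y$) and observes that $\killed(\{\crossing,\swords\},\{p\})\supseteq Q_i\setminus\{p\}$, so each round retires all but one of its points and $\sum_i(|Q_i|-1)\le|S|\le 2n$. You instead prove the global ``tournament'' property that for every pair of distinct points of $Q$ at least one of the two strict round-orders is forbidden, and deduce that the replays of any point occupy a contiguous block of otherwise-empty rounds, these blocks being pairwise disjoint. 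Both yield $O(n)$; the paper's route needs only one observation about a single extreme point per round, while yours requires the full case analysis of how two of these triangles can interact (which you correctly flag as the real work) but in exchange gives a stronger structural conclusion about the play sequence. One small inaccuracy in your fact (iii): when the first-played triangle's top interval is strictly \emph{contained in} the second's, the pair is also a $\swords$ (both orders of a nested pair give $\swords$), so your ``exactly when'' and ``two remaining sub-cases'' undercount by one. This is harmless, since everywhere you invoke (iii) you only need that at least one of the two orders is forbidden, and the order you exhibit is indeed a $\swords$.
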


\begin{proof}
  \centerline{\includegraphics[height=3cm]{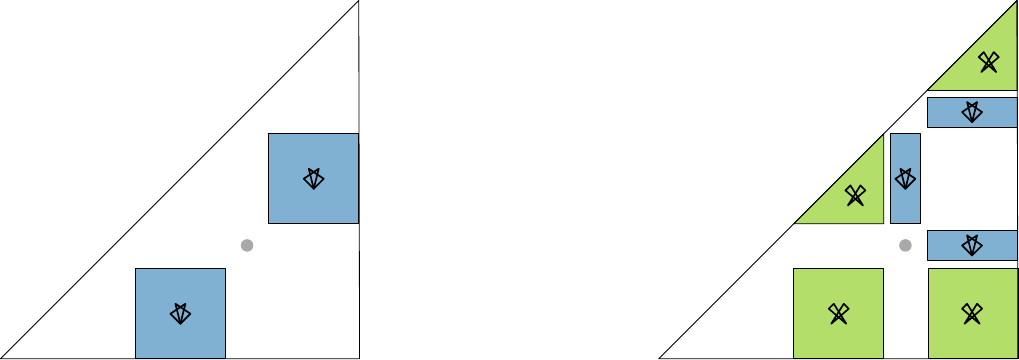}}
  Let $S=\bigcup_{i=1}^n Q_i$. We claim that $S$ has no $\Gamma$-configuration so, by \lemref{forbidden}, $|S|\le 2n$.  To see this, assume $S$
  contains a $\Gamma$-configuration  $a$, $b$, and $c$. Then the rules
  for $\crossing$ imply that no set $Q_i$ contains both $a$ and $c$.
  However, the rules for $\swords$ and $\crossing$ imply that, if $a\in
  Q_i$, $b\in Q_j$ and $c\in Q_k$ then 
  \[  i \ge j \ge k \ge i \borisspace . \] 
  (See \figref{crossing-swords}).  But this is a contradiction,
  since it implies that $i=j=k$.

  \begin{figure}[htbp]
    \centering{
      \includegraphics{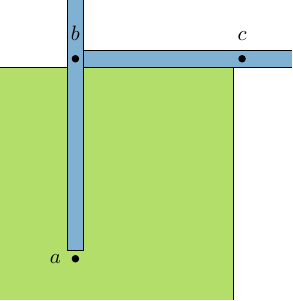} 
    }
    \caption{The proof of \thmref{crossing-swords}.}
    \figlabel{crossing-swords}
  \end{figure}

  Now, for some $Q_i$, consider a point $p\in Q_i$ with minimum x-coordinate
  and, in case more than one such point exists, take the the one that
  minimizes $\y(p)$.  Observe that $\killed(\{\crossing,\swords\},\{p\})
  \supseteq Q_i\setminus\{p\}$.  Indeed, every point directly above
  and every point to the right of $p$ is killed by $p$ or cannot be
  included in $Q_i$ because of the rule for $\crossing$.  Therefore,
  $Q_i$ eliminates at least $|Q_i|-1$ points of $S$. It follows that
  $\sum_{i=1}^n |Q_i| \le |S|+n \le 3n$.
\end{proof}

\begin{thm}\thmlabel{nested-ears-david}
  $\ex'(n,\{\nested,\ears,\david\}) \in O(n)$.
\end{thm}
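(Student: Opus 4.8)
I would work in the dot-puzzle model throughout, beginning — as in the earlier proofs — with the picture of the rules for $X=\{\nested,\ears,\david\}$. Reading them off \figref{forbidden-colour}, the facts I need are: \textbf{(i)} the rule for $\nested$ makes each $Q_i$ non-decreasing and, for a point $p=(x,y)$ played in round $i$, forbids in every later round every point lying below $p$ in its column or to the left of $p$ in its row; \textbf{(ii)} the rule for $\ears$ forbids, in every later round, every point lying in a column of index less than $y$; and \textbf{(iii)} the rule for $\david$ forbids, in every later round, every point $(x',y')$ with $x'>x$ and $y<y'<x$. Put $S=\bigcup_{i=1}^nQ_i$. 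The plan is to establish two claims — $\sum_i|Q_i|\le|S|+2n$ and $|S|\in O(n)$ — which together yield the theorem.

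\emph{Claim 1: $\sum_i|Q_i|\le|S|+2n$.} Call a point of $Q_i$ \emph{recurring} if it also belongs to some $Q_j$ with $j>i$; I claim each round has at most two recurring points. Two recurring points of a single round can share neither a column — by (i) the higher of the two, played in round $i$, kills the lower one in every round after $i$, contradicting recurrence — nor, by the mirror image of (i), a row; as $Q_i$ is non-decreasing, the recurring points of $Q_i$ therefore form a strictly increasing chain $p_1,\dots,p_k$ with $p_\ell=(x_\ell,y_\ell)$, $x_1<\cdots<x_k$, $y_1<\cdots<y_k$. Applying (ii) to $p_k$ forces $x_\ell\ge y_k$ for every $\ell<k$, since each such $p_\ell$ recurs in a round $>i$ and hence is not in a column killed by $p_k$; applying (iii) to $p_1$ forces $y_\ell\ge x_1$ for every $\ell>1$; and if $k\ge 3$ these give $y_2\ge x_1\ge y_k>y_2$, impossible, so $k\le 2$. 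Thus at least $|Q_i|-2$ points of $Q_i$ make their last appearance in round $i$; these last-appearance sets are pairwise disjoint and all lie in $S$, so $\sum_i(|Q_i|-2)\le|S|$.

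\emph{Claim 2: $|S|\in O(n)$.} I would mimic the $\Gamma$-configuration argument of \thmref{taco-nested-david}. Let $a=(x_0,y_0)$, $b=(x_0,y_1)$, $c=(x_1,y_1)$ be a $\Gamma$-configuration in $S$; then $y_0<y_1<x_0<x_1$, the last inequality because $b\in Q$. Writing $\phi(q)$ for the first round in which $q$ is played, (i) yields $\phi(a)\le\phi(b)\le\phi(c)$ and (iii) yields $\phi(c)\le\phi(a)$, so $a$, $b$, $c$ are all first played in a single round $i$, hence all belong to $N_i:=Q_i\setminus\bigcup_{j<i}Q_j$. Since $c$ sits strictly to the right of $b$ in the non-decreasing set $Q_i$, the point $b$ is the topmost point of its column in $Q_i$, and therefore also in $N_i$. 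Consequently, if from each $N_i$ we delete the topmost point in each of its columns, the union $\hat S$ of what survives contains no $\Gamma$-configuration, so $|\hat S|\le 2n$ by \lemref{forbidden}. Because the sets $N_i$ partition $S$, we get $|S|=|\hat S|+\sum_i t_i$, where $t_i$ is the number of distinct columns used by $N_i$; so it remains to bound $\sum_i t_i$.

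Bounding $\sum_i t_i$ by $O(n)$ is the step I expect to be the main obstacle. The mechanism: a round introducing new points in many columns must hobble all later rounds. By (ii), the columns used by $N_i$ never drop below $g_i:=\max_{r\le i}(\text{top }y\text{-level occurring in }N_r)$; hence those columns of $N_i$ whose index is less than the top $y$-level of $N_i$ number at most $g_i-g_{i-1}$, and these contributions telescope to at most $g_n\le n$. The remaining, large-indexed columns of $N_i$ must — since $Q_i$ is non-decreasing — carry small $y$-values, and (i) together with (iii) then confine every subsequent round either to a column band of bounded width or to (essentially) the super-diagonal $\{(x,x-1)\}$, which in turn can support only one further "wide" round before (ii) confines the rest of the game. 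Converting this dichotomy into a clean $O(n)$ bound on $\sum_i t_i$ is the technical heart of the proof; granting it, $|S|\in O(n)$, so every dot-puzzle solution satisfies $\sum_i|Q_i|\le|S|+2n\in O(n)$, and therefore $\ex'(n,\{\nested,\ears,\david\})\in O(n)$.
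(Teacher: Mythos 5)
Your overall strategy matches the paper's: bound by two the number of points of each $Q_i$ that recur in later rounds (giving $\sum_i|Q_i|\le|S|+2n$), then bound $|S|$ by showing that any $\Gamma$-configuration in $S$ must live inside a single round and invoking \lemref{forbidden}. Your Claim~1 is correct and complete; the chain argument ($y_2\ge x_1\ge y_k>y_2$ for $k\ge 3$) is a clean alternative to the paper's route, which instead exhibits three explicit witnesses $p,q,r$ in $Q_i$ whose kill-set covers $Q_i\setminus\{p,q\}$. The first half of your Claim~2 is also sound: the cycle $\phi(a)\le\phi(b)\le\phi(c)\le\phi(a)$ does force a $\Gamma$-configuration into a single round, and $b$ is then the topmost point of its column in that round.

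The gap is your last step. After deleting the topmost point of each column of each $N_i$ you are left needing $\sum_i t_i\in O(n)$, and the paragraph you offer — the telescoping of $g_i$ plus the ``column band / super-diagonal'' dichotomy — is not a proof; you say so yourself, and as written it does not control the interaction between successive rounds each of which may legitimately use many columns whose indices all exceed the current threshold (a single round can occupy $\Omega(n)$ columns, e.g.\ along the diagonal, and your sketch does not rule out several such rounds). The point you are missing is that no such argument is needed: the set of points you deleted contains, by construction, at most one point per column within each $N_i$, while every $\Gamma$-configuration has two of its points ($a$ and $b$) in a common column and — by your own argument — lies entirely inside one $N_i$. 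Hence the deleted set is itself $\Gamma$-free, \lemref{forbidden} bounds it by $2n$ as well, and $|S|\le 4n$ follows with no further work. This is precisely the device the paper uses: it deletes the leftmost point of each row of each $Q_i$ rather than the topmost point of each column, and observes that both the surviving set and the deleted set satisfy the hypotheses of \lemref{forbidden}.
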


\begin{proof}
   \centerline{\includegraphics[height=3cm]{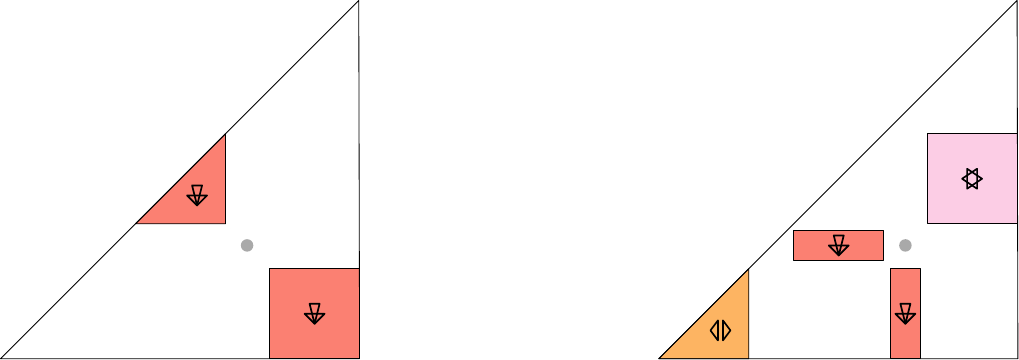}}
   Let $S=\bigcup_{i=1}^n Q_i$.  We will first show that, for each
   $i\in\{1,\ldots,n\}$, there are at most two points of $Q_i$
   that appear in $Q_{i+1},\ldots,Q_n$.  Refer to \figref{nested-ears-david}. 
   Define
   \[
        Q_i^* = \{ (x,y)\in Q_i : x\ge\max\{y(p):p\in Q_i\} \} \borisspace ,
   \]
   and let $p$ and $r$ be the top rightmost and bottom leftmost
   points in $Q_i^*$, respectively.  If there is more than one
   point in $Q_i^*$ with $x$-coordinate equal to $\x(r)$ (as in
   \figref{nested-ears-david}(a)) then we define $q$ to be the
   highest such point (note that this includes the case where $p=r$).
   Otherwise (as in \figref{nested-ears-david}(b)), we define $q$ to
   be the rightmost point with $y$-coordinate $\y(r)$ (note that this
   includes the case where $q=r$).  Now, observe that
   $\killed(\{\nested,\ears,\david\}, \{p,q,r\}) \supseteq Q_i\setminus
   \{p,q\}$, so $p$ and $q$ are the only points of $Q_i$ that can appear
   again in $Q_{i+1},\ldots,Q_n$. (Note that \figref{nested-ears-david} only illustrates the case in which $y(p)=x(r)$; if $y(p) < x(r)$, then even $p$
   is contained in $\killed(\{\nested,\ears,\david\}, \{p,q,r\}$.)
   \begin{figure}
     \centering{
       \begin{tabular}{cc}
         \includegraphics{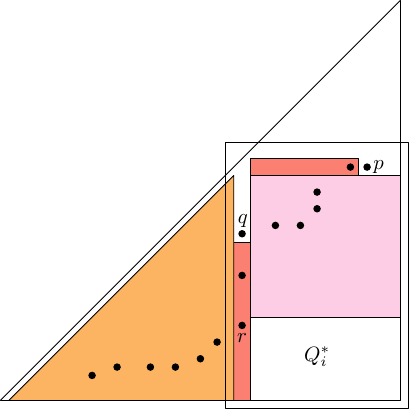} &
         \includegraphics{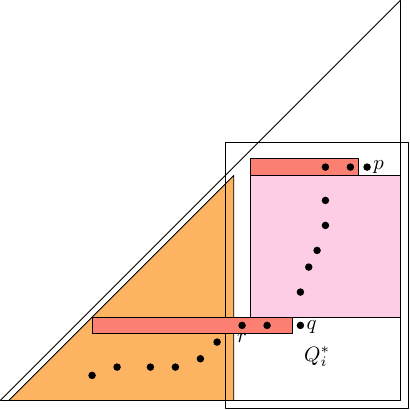} \\
         (a) & (b)
       \end{tabular}
     }
     \caption{A step in the proof of \thmref{nested-ears-david}.}
     \figlabel{nested-ears-david}
   \end{figure}

   We can therefore think of $Q_i$ as eliminating $|Q_i|-2$ points
   from $S$, so $\sum_{i=1}^n(|Q_i|-2) \le |S|$, which implies that
   $\sum_{i=1}^n |Q_i| \le |S|+2n$.   All that remains now is to bound
   $|S|$.

   For each $i\in \{1,\ldots,n\}$, let $Q_i'$ be obtained from $Q_i$ by
   removing the leftmost point in each row.  Let $S'=\bigcup_{i=1}^n
   Q_i'$.  We claim that $S'$ contains no $\Gamma$-configuration so,
   by 
   \lemref{forbidden}, $|S'|\le 2n$.  To see why this is so,
   suppose that $S'$ contains a $\Gamma$-configuration $a$, $b$, and $c$.
   Then, as argued in the proof of \thmref{taco-nested-david}, it must be
   that $a,b,c\in Q_i'$ for some $i$.  However, this contradicts the fact
   (due to $\nested$) that $Q_i$ is non-decreasing since the leftmost
   point, $b'$, of $Q_i$ in the same row as $b$ is to the left of $a$
   (see \figref{nested-ears-david-ii}).

   \begin{figure}
     \centering{
       \includegraphics{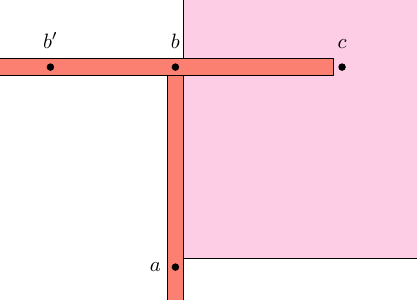} 
     }
     \caption{Another step in the proof of \thmref{nested-ears-david}.}
     \figlabel{nested-ears-david-ii}
   \end{figure}

   Therefore, $|S'|\le 2n$.   Now let $S''=S\setminus S'$.  We
   claim that $S''$ also satisfies the conditions of \lemref{forbidden}.
   Indeed, by the same reasoning as above, if there were $a,b,c\in
   S''$ forming a $\Gamma$-configuration, then it must
   be that $a,b,c\in Q_i\setminus Q_i'$ for some $i$.  But this is a
   contradiction since $b$ and $c$ are in the same row, and $Q_i\setminus
   Q_i'$ contains at most one point per row.

   Wrapping up, we have $|S|=|S'|+|S''| \le 4n$ so $\sum_{i=1}^n|Q_i| \le 6n$.
\end{proof}

\begin{thm}\thmlabel{nested-bat-david}
  $\ex'(n,\{\nested,\bat,\david\})\in O(n)$.
\end{thm}

\begin{center}
   \includegraphics[height=3cm]{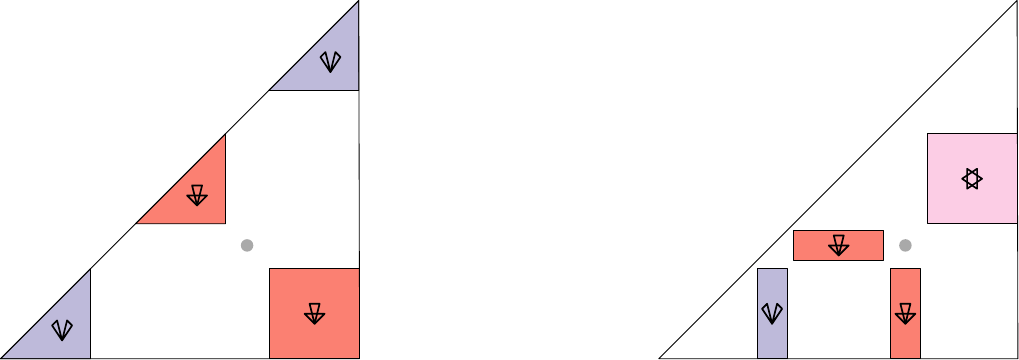}
\end{center}

\begin{proof}
  Consider the set $Q_i$ played during some round $i$.  The rules for
  $\nested$ imply that $Q_i$ is non-decreasing.  Consider the following
  subsets of $Q_i$:
  \begin{enumerate}
    \item the set $L_i$ of points in the leftmost column of $Q_i$;
    \item the set $T_i$ of points in the topmost row of $Q_i$; 
    \item the set $B_i$ of points in the bottommost row of $Q_i$; and 
    \item the set $N_i=Q_i\setminus(L_i\cup T_i\cup B_i)$.
  \end{enumerate}
  Let $p_i$ denote the lowest leftmost point of $Q_i$ (the unique point
  in $B_i\cap L_i$).  The rule for $\bat$ implies that every point of
  $N_i$ is contained in $\killed(\{\david\},\{p_i\})$.
  
  Refer to \figref{nested-bat-david}.  Observe that, for any point $p\in
  N_i$, the entire row containing $p$ is killed in the sense that it is
  contained in $\killed(\{\david\},\{p_i\})\cup\killed(\{\nested\},\{p\})$.
  Next, let $t_i$ be
  the topmost point in $L_i$ and observe that, for any point $p\in
  L_i\setminus\{t_i,p_i\}$, the entire row containing $p$ is killed by
  $\killed(\{\david\},\{p_i\})\cup\killed(\{\nested\},\{t_i,p\})$.

  Consider the operation of removing the leftmost point from each
  row of $Q_i$, for each $i\in \{1,\ldots,n\}$.  We claim that
  this removes a total of at most $4n$ points from $Q_1,\ldots,Q_n$.
  Indeed, by the preceding discussion, if we remove a point $p\in N_i$
  or $p\in L_i\setminus\{t_i,p_i\}$ then this point can be charged to
  a row that is never used again in $Q_{i+1},\ldots,Q_n$.  For each
  round $i\in\{1,\ldots,n\}$, there are only three other choices for $p$:
  $p=p_i$, $p=t_i$, or $p$ is the leftmost point in $T_i$.  Thus, we
  can charge each row for removing at most one point and each round
  for removing at most 3 points.

  \begin{figure}
    \begin{center}\includegraphics{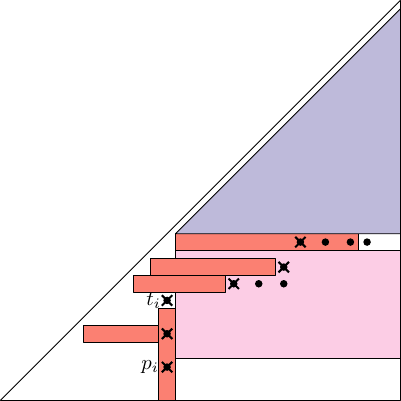}\end{center}
    \caption{The proof of \thmref{nested-bat-david}.} 
    \figlabel{nested-bat-david}
  \end{figure}

  For each $i\in\{1,\ldots,n\}$, let $Q_i'$ be the subset of $Q_i$
  obtained by removing the leftmost point in each row and observe that
  $Q_i'$ is an increasing set of points.  By the preceding discussions
  $\sum_{i=1}^n|Q_i| \le 4n+\sum_{i=1}^n|Q_i'|$.  Let $S=\bigcup_{i=1}^n
  Q_i'$.  We claim that $S$ contains no $\Gamma$-configuration.
  To see why this is so, observe that, since each $Q_i'$ is an
  increasing set, if two points $a\in Q_i'$ and $b\in Q_j'$ of $S$
  are in the same column then $i\neq j$.  Assume $b$ is above $a$,
  then $a\in\killed(\{\nested\},\{b\})$ so $i < j$.  Now, if $c\in
  Q_k'$ is to the right of $b$, then $b\in\killed(\{\nested\},\{c\})$,
  so $j \le k$. Therefore, $i < j \le k$, but this is not possible
  since $c\in\killed(\{\david\},\{a\})$, so $k \le i$. Therefore, by
  \lemref{forbidden}, $|S|\le 2n$.

  All that remains is to account for points in $S$ that are played
  multiple times.  In each $Q_i$ there are at most two points that can
  be played in subsequent rounds: the
  rightmost point in $B_i$ and the rightmost point in $T_i$.  We charge
  each occurrence of such repeated points to the rounds in which they
  are these extreme points. In this way, each round is charged for at
  most two such points and the total contribution of these points to
  $\sum_{i=1}^n |Q_i|$ is at most $2n$.

  In summary,
  \[
    \sum_{i=1}^n |Q_i| \le 
    4n+\sum_{i=1}^n |Q_i'| \le  
    4n+2n + |S| \le 8n \borisspace . \qedhere
  \]
\end{proof}

\subsection{Forbidding Swords}

Next we focus on the configuration $\swords$ and give linear upper bounds
bounds on $\ex'(n,\{\swords, \taco\})$ and $\ex'(n,\{\swords, \nested\})$.
We begin with another lemma about forbidden configurations of points
that is similar to \lemref{forbidden}.  We say that a point $(x_i,y_i)$
\emph{SE-dominates} a point $(x_j,y_j)$ if $x_i > x_j$ and $y_i < y_j$.
We say that three points $a=(x_0,y_0)$, $b=(x_0,y_1)$, and $c$
form an \emph{obtuse-$L$-configuration} if $y_1<y_0$ and $c$ SE-dominates
$b$.

\begin{figure}
  \centering{\includegraphics{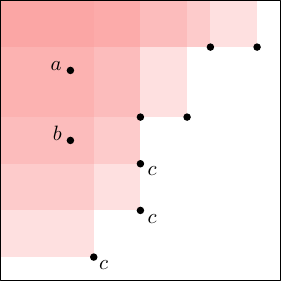}}
  \caption{The proof of \lemref{forbidden-ii}.}
  \figlabel{se-domination}
\end{figure}

\begin{lem}\lemlabel{forbidden-ii}
   Let $S$ be a subset of $\{1,\ldots,n\}^2$ with no three points $a$,
   $b$, and $c$ forming an obtuse-$L$-configuration.  Then $|S|\le 3n$.
\end{lem}

\begin{proof}
  Refer to \figref{se-domination}.  Consider the \emph{Pareto boundary}
  $P\subseteq S$ containing each point of $S$ that is not SE-dominated
  by any other point in $S$.  The set $P$ is non-decreasing, so it
  has size at most $2n$.  We claim that the set $S\setminus P$ has at
  most one point in each column, so $|S|\le 3n$.  To see why this claim
  is true, observe that if some column of $P\setminus S$ contains two
  points $a$ and $b$ with $a$ above $b$, then at least one point $c$
  in $P$ SE-dominates $b$, so that $a$, $b$, and $c$ would form the
  obtuse-$L$-configuration.
\end{proof}

\begin{thm}\thmlabel{taco-swords}
  $\ex'(n,\{\taco,\swords\}) \in O(n)$.
\end{thm}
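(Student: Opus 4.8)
The plan is to follow the template of \thmref{taco-nested-david}: pass to the point set $S=\bigcup_{i=1}^{n}Q_i$ of everything ever played and apply \lemref{forbidden-ii}. First I would record that, since $\taco\in X$, no point of $Q$ is played in two different rounds — replaying a point $p$ creates two triangles sharing the top edge determined by $p$, with their bottom vertices on the same side of that edge, i.e., a $\taco$. Hence each point of $S$ lies in exactly one $Q_i$, so $\sum_{i=1}^{n}|Q_i|=|S|$, and it suffices to bound $|S|$. By \lemref{forbidden-ii} it is then enough to show that $S$ contains no lazy-$L$-configuration.

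For that I would argue by contradiction. Suppose $a,b,c\in S$ form a lazy-$L$-configuration — so $a$ and $b$ lie in a common column, one above the other, and $c$ SE-dominates the lower of them; write $a\in Q_i$, $b\in Q_j$, $c\in Q_k$. Since $a$ and $b$ share a column, the $\taco$ rule — no single round contains two points sharing a row or a column — gives $i\ne j$. Next I would extract from \figref{forbidden-colour} the following fact: in the top/bottom picture, if the interval spanned by one triangle's top vertices strictly contains that of another and the two triangles have distinct bottom vertices, then the pair is in $\swords$ position; equivalently, if $p$'s top-vertex interval strictly contains $q$'s then $q\in\killed(\{\swords\},\{p\})$ and $p\in\killed(\{\swords\},\{q\})$. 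Because $c$ SE-dominates the lower of $a,b$, its top-vertex interval strictly contains the top-vertex intervals of both $a$ and $b$ (the lower of $a,b$ has interval contained in $c$'s by the SE-domination; the higher one shares its right top vertex with the lower one and has its left top vertex further right, so its interval is contained in the lower one's). Applying the displayed fact to $\{a,c\}$ and to $\{b,c\}$, and using $\swords\in X$, forces $i=k$ and $j=k$; so $i=j=k$, contradicting $i\ne j$.

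The step needing the most care is verifying against \figref{forbidden-colour} that, for a strictly nested top-vertex pair, \emph{each} of the two points kills the other for \emph{all} later rounds — i.e., the $\swords$ constraint is symmetric here — so that no ordering of the rounds $i,j,k$ escapes the contradiction above; the remainder is bookkeeping. Once $S$ has been shown to be lazy-$L$-free, \lemref{forbidden-ii} yields $\sum_{i=1}^{n}|Q_i|=|S|\le 3n$, so $\ex'(n,\{\taco,\swords\})\in O(n)$.
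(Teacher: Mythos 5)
Your proof is correct and takes essentially the same route as the paper's: use $\taco$ to rule out replays so that $\sum_i|Q_i|=|S|$, show $S$ is lazy-$L$-free because $c$ SE-dominates both column points and SE-domination means strictly nested top-vertex intervals, which yield a \emph{symmetric} $\swords$ kill forcing $i=k=j$ against the $\taco$-imposed $i\ne j$, then apply \lemref{forbidden-ii}. The only difference is that you explicitly justify the mutual $\swords$ kills (and correctly read the lazy-$L$ definition as ``$c$ SE-dominates the lower of the two column points,'' which is what both \lemref{forbidden-ii} and this theorem need), whereas the paper delegates that check to \figref{forbidden-colour}.
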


\begin{proof}
  \centerline{\includegraphics[height=3cm]{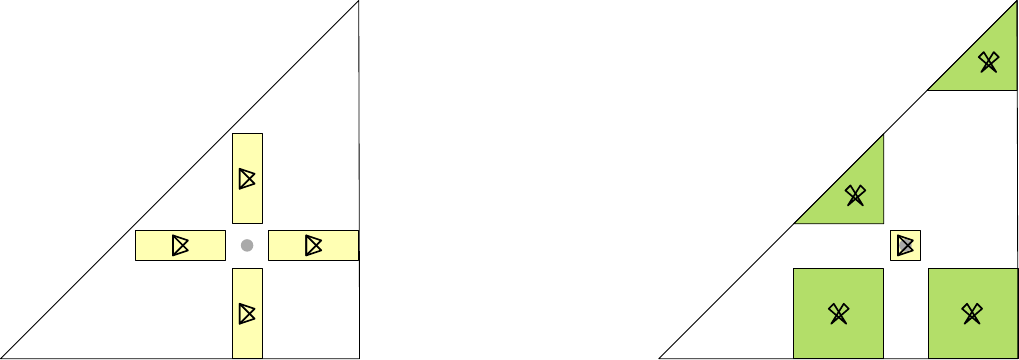}} Let
  $S=\bigcup_{i=1}^n Q_i$. Then the rules for $\taco$ imply that
  $|S|=\sum_{i=1}^n |Q_i|$, so it suffices to bound~$|S|$.  We claim that
  $S$ contains no obtuse-$L$-configuration so, by \lemref{forbidden-ii},
  $|S|\le 3n$.

  Suppose there were $a\in Q_i$, $b\in Q_j$, and $c\in Q_k$ forming an
  obtuse-$L$-configuration.  Now, $a\in\killed(\{\swords\}, \{c\})$ and
  $c\in\killed(\{\swords\}, \{a\})$, so it must be that $i=k$.  The same
  argument, applied to $b$ and $c$, implies that $j=k$, so $i=j=k$.
  But this is a contradiction since the rules for $\taco$ imply that
  $i\neq j$.
\end{proof}

\begin{thm}\thmlabel{nested-swords}
  $\ex'(n,\{\nested,\swords\}) \in O(n)$.
\end{thm}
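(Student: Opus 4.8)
The plan is to mimic the structure of the proof of Theorem~\ref{thmlabel:taco-swords}, but to handle the fact that, without forbidding $\taco$, a single point of $Q$ may now be played in several rounds, so $|S|$ and $\sum_i|Q_i|$ are no longer equal. First I would draw the rules picture obtained by overlaying the $\nested$ and $\swords$ rules from Figure~\ref{figlabel:forbidden-colour}: the $\nested$ rule forces each $Q_i$ to be a non-decreasing point set, and it kills (in all later rounds) everything weakly to the left of or weakly below the point played; the $\swords$ rule additionally kills, in later rounds, a further quadrant of points. Set $S=\bigcup_{i=1}^n Q_i$.

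The first main step is to bound $|S|$ by showing $S$ contains no lazy-$L$-configuration, so that Lemma~\ref{lemlabel:forbidden-ii} gives $|S|\le 3n$. Suppose $a=(x_0,y_0)\in Q_i$, $b=(x_0,y_1)\in Q_j$, $c=(x_1,y_1)\in Q_k$ with $y_1>y_0$ and $c$ SE-dominating $b$ (so $x_1>x_0$). I expect the round indices to be forced into a cyclic chain of inequalities exactly as in Theorems~\ref{thmlabel:taco-nested-david} and~\ref{thmlabel:crossing-swords}: $b$ lies in the region killed by $a$ under $\nested$ (it is directly above $a$), giving $i\le j$; $c$ SE-dominates $b$, which is the region killed by $b$ under $\swords$, giving $j\le k$; and then $a$ sits in the region killed by $c$ under $\nested$ (weakly below and weakly left of $c$, since $y_0<y_1$ and $x_0<x_1$), giving $k\le i$. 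Hence $i=j=k$, but then $a$ and $c$ cannot both lie in $Q_i$: $c$ SE-dominates $a$, so $\{a,c\}$ is a decreasing pair and violates the $\nested$ rule that $Q_i$ be non-decreasing. (The precise pairing of which forbidden-configuration rule kills which point needs to be checked against Figure~\ref{figlabel:forbidden-colour}; that bookkeeping — getting the orientation of each quadrant right — is the one place the argument can go wrong, and it is the step I would be most careful about.) This contradiction establishes $|S|\le 3n$.

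The second step accounts for multiplicity, i.e.\ bounds $\sum_{i=1}^n|Q_i|$ in terms of $|S|$, following the charging idea used in Theorems~\ref{thmlabel:nested-crossing-ears} and~\ref{thmlabel:crossing-swords}. For each round $i$, consider the bottom-leftmost point $p$ of $Q_i$ (minimum $x$-coordinate, and among those minimum $y$-coordinate); since $Q_i$ is non-decreasing, $p$ is weakly below and weakly left of every other point of $Q_i$, so $\killed(\{\nested\},\{p\})\supseteq Q_i\setminus\{p\}$ — possibly with one exceptional point in $p$'s own row or column that the $\nested$-quadrant handles only if it lies strictly below/left; if the picture leaves one such ``free'' point per round I would simply absorb it as in the earlier proofs. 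Thus $Q_i$ removes at least $|Q_i|-O(1)$ points from $S$ permanently, giving $\sum_{i=1}^n(|Q_i|-O(1))\le|S|\le 3n$, hence $\sum_{i=1}^n|Q_i|\le 3n+O(n)=O(n)$. Combining the two steps yields $\ex'(n,\{\nested,\swords\})\in O(n)$, completing the proof.
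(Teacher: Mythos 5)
There is a genuine gap, and it sits exactly where the theorem is hard: your multiplicity accounting. You assert that if $p$ is the bottom-leftmost point of $Q_i$ then $\killed(\{\nested\},\{p\})\supseteq Q_i\setminus\{p\}$. This misreads the rules: per \figref{forbidden-colour}(b), a played point kills under $\nested$ only the points \emph{directly} to its left and \emph{directly} below it (a half-row plus a half-column), not a quadrant --- and in any case the other points of $Q_i$ lie weakly to the north-east of $p$, on the wrong side even of a quadrant. Nor does $\swords$ rescue the claim: for an increasing pair $u$ (south-west) and $w$ (north-east), which point kills which under $\swords$ depends on the sign of $\y(w)-\x(u)$; when $\y(w)<\x(u)$ it is $w$ that kills $u$, and when $\y(w)=\x(u)$ (the two triangles share a top vertex) neither kills the other. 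So no single point of $Q_i$ eliminates the rest of its round, and long ``staircase'' chains $u_1,\ldots,u_m$ with $\x(u_k)=\y(u_{k+1})$ contain consecutive pairs that never constrain one another at all; ``absorbing $O(1)$ free points per round'' does not close this. Your first step leans on the same quadrant misreading (the inference from ``$a$ is weakly below and weakly left of $c$'' to $a\in\killed(\{\nested\},\{c\})$ is unjustified, since $\nested$ acts only within a common row or column, and the resulting inequality also points the wrong way); its conclusion $|S|\in O(n)$ is nonetheless true, and the paper gets it more cheaply by noting that $\nested$ (within a round) and $\swords$ (across rounds) force $S$ to be non-decreasing, so $|S|\le 2n$.

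The missing mechanism is the bulk of the paper's proof. It builds an auxiliary graph $G$ on $S$ with an edge $uw$ whenever $\x(u)=\y(w)$, shows $G$ is 4-colourable (by splitting $S$ into two forests), proves that the kill relation restricted to each colour class is a tournament --- the troublesome ties $\x(u)=\y(w)$ having been separated into different classes --- and then bounds each class's contribution to $\sum_i|Q_i|$ by peeling off directed cycles (whose vertices must all be played in a single round and never again) and using a topological order on the acyclic remainder. Some replacement for this argument is required; the ``one distinguished point of $Q_i$ kills the rest'' charging scheme that works in \thmref{nested-crossing-ears} and \thmref{crossing-swords} does not transfer to $\{\nested,\swords\}$.
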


\begin{proof}
   \centerline{\includegraphics[height=3cm]{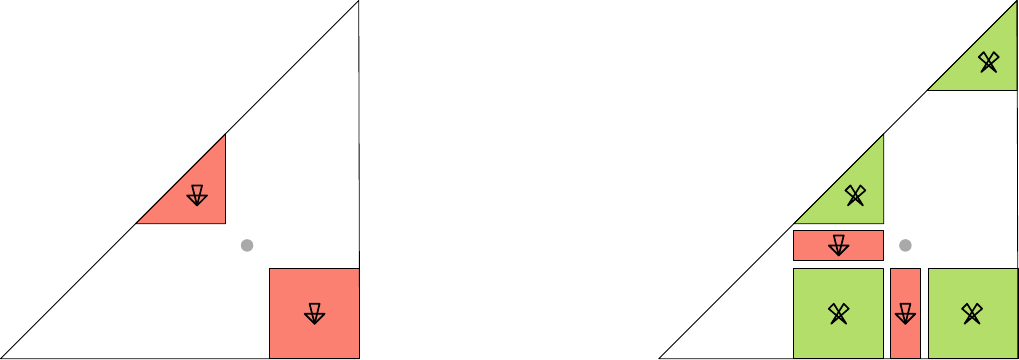}}
   Let $S=\bigcup_{i=1}^n Q_i$.  We claim that $S$ is non-decreasing.
   Indeed, the rule for $\nested$  prevents two decreasing points from
   being played in the same round, while the rule for $\swords$ prevents
   two decreasing points from being played in different rounds.
   This implies that $|S|\le 2n$.  What remains is to account for points 
   of $S$ that are played in multiple rounds.

   Consider the graph $G$ with vertex set $S$ that contains an edge
   $uw$ if and only if the $x$-coordinate of $u$ is equal to the
   $y$-coordinate of $w$.  We claim that $G$ is 4-colourable.  To prove
   this, we partition $S$ into two sets $A$ and $B$ and show that each
   of the graphs $G[A]$ and $G[B]$ induced by $A$ and $B$ is 2-colourable
   (in fact, $G[A]$ and $G[B]$ are each forests). Thus, if we colour $A$
   with colours $\{1,2\}$ and $B$ with colours $\{3,4\}$, then we obtain
   a 4-colouring of $G$.

   Refer to \figref{four-colouring}.  Remove the bottom-most point of
   $S$ from each column and what remains is the set $A$.  Observe that,
   since $S$ is non-decreasing, $A$ contains at most one point per row.
   Imagine directing the edges of $G[A]$ from right to left (top to
   bottom).  This directed graph is obviously acyclic and, since each row contains at most one point of $A$, has maximum in-degree 1.  Therefore $G[A]$
   is a forest and can be 2-coloured using the colours $\{1,2\}$. 
   \begin{figure}
      \centering{
       \begin{tabular}{ccc}
         \includegraphics{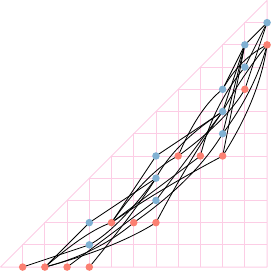} &
         \includegraphics{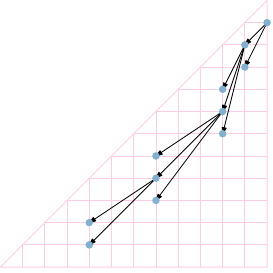} &
         \includegraphics{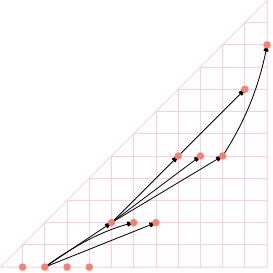} \\
         $S$ & $G[A]$ & $G[B]$
      \end{tabular}
      }
      \caption{Four-colouring the graph $G$ in the proof of 
               \thmref{nested-swords}.}
      \figlabel{four-colouring}
   \end{figure}
   By a similar argument, using the fact that each column contains at
   most one point of $B$, the graph $G[B] = G[S\setminus A]$ can be
   2-coloured using the colours $\{3,4\}$.

   The resulting 4-colouring of $G$ partitions $S$ into 4 colour classes
   $S_1,\ldots,S_4$.  We now argue that each of these colour
   classes contributes $O(n)$ to $\sum_{i=1}^n|Q_i|$.
   Consider a (new) directed graph $H_j=(S_j,E_j)$ that
   contains the edge $\overrightarrow{uw}$ if and only if
   $u$ kills $w$, i.e., $w\in\killed(\{\nested,\swords\},\{u\})$.
   We claim that this graph is \emph{complete}, i.e., for any $u,w\in
   S_j$ at least one of $\overrightarrow{uw}$ or $\overrightarrow{wu}$ is
   in $E_j$.  To see why this is so, consider any two distinct points $u,w\in S_j$
   with $u=(x_0,y_0)$ and $w=(x_1,y_1)$.  Since $S$ (and hence $S_j$)
   is non-decreasing, we may assume without loss of generality that
   $x_0\le x_1$ and $y_0\le y_1$.
   There are five cases to consider: 
   \begin{enumerate}
    \item $y_0=y_1$, so $x_0 < x_1$. In this case, $u\in\killed(\{\nested\},w)$, so $\overrightarrow{wu}\in E$.
    \item $x_0=x_1$, so $y_0 < y_1$. In this case, $u\in\killed(\{\nested\},w)$, so $\overrightarrow{wu}\in E$.
    \item $x_0 < x_1$, $y_0 < y_1$, and $y_1 > x_0$.  In this case, $w\in\killed(\{\swords\}, \{u\})$, so $\overrightarrow{uw}\in E$.
    \item $x_0 < x_1$, $y_0 < y_1$, and $y_1 < x_0$.  In this case, $u\in\killed(\{\swords\}, \{w\})$, so $\overrightarrow{wu}\in E$.
    \item $x_0 < x_1$, $y_0 < y_1$, and $y_1 = x_0$.  This case cannot occur
      since, in this case, the graph $G$ contains the edge $uw$, so $u$ and $w$
      are assigned different colours and at most one of them is $j$.
  \end{enumerate}

  Suppose now that $H_j$ contains a directed cycle
  $C=u_0,\ldots,u_{\ell-1}$.  Since the points in
  this cycle all kill each other, i.e., $u_{k+1\bmod
  \ell}\in\killed(\{\nested,\swords\},\{u_k\})$), it must be
  the case that all the vertices of $C$ are played in the same
  round $i'\in\{1,\ldots,n\}$ and never played again, i.e.,
  $V(C)\subseteq Q_{i'}$ and $V(C)\cap Q_{j'}=\emptyset$ for all
  $j'\in\{1,\ldots,n\}\setminus \{i'\}$.

  Now, if we repeatedly find a cycle in $H_j$ and remove its vertices, we
  will eventually be left with an acyclic subgraph $H_j'$ with vertex set
  $S'_j\subseteq S_j$.  From the preceding discussion, we know that
  each cycle vertex we remove contributes only 1 to $\sum_{i=1}^n |Q_i|$:
  \[
        \sum_{i=1}^n |(S_j\setminus S_j')\cap Q_i| 
             = |S_j\setminus S_j'| \borisspace .
  \]

  Finally, we are left with the complete acyclic subgraph $H_j'$ with vertex
  set $S_j'$ and whose topological sort order we denote by $\prec$.  Now,
  if $Q_i$ contains vertices $v_1\prec\cdots\prec v_{k}$ 
  of $H_j'$, then $v_1$ kills all of $v_2,\ldots,v_{k}$ so that these
  vertices can not appear in any $Q_{i'}$ with $i'>i$.  This implies that
  \[
      \sum_{i=1}^n (|S_j'\cap Q_i|-1) \le |S_j'| \borisspace ,
  \]
  so $\sum_{i=1}^n |S_j'\cap Q_i| \le |S_j'|+n$.  Putting everything together, we have
  \begin{align*}
     \sum_{i=1}^n|Q_i| 
         & = \sum_{j=1}^4\left(\sum_{i=1}^n |Q_i\cap S_j|\right) \\
         & = \sum_{j=1}^4\left(\sum_{i=1}^n (|Q_i\cap(S_j\setminus S_j')|
              + |Q_i\cap S_j'|\right) \\
         & \le \sum_{j=1}^4\left(|S_j\setminus S_j'| + |S_j'|+n\right) \\
         & = |S|+4n \le 6n \borisspace . \qedhere 
  \end{align*}
\end{proof}

\subsection{Monotone Matrices, Tripod Packing, and 2-Comparable Sets}
\seclabel{tripods}

In this section, we discuss $\ex(n,\{\taco,\nested\})$:\\
\centerline{\includegraphics[height=3cm]{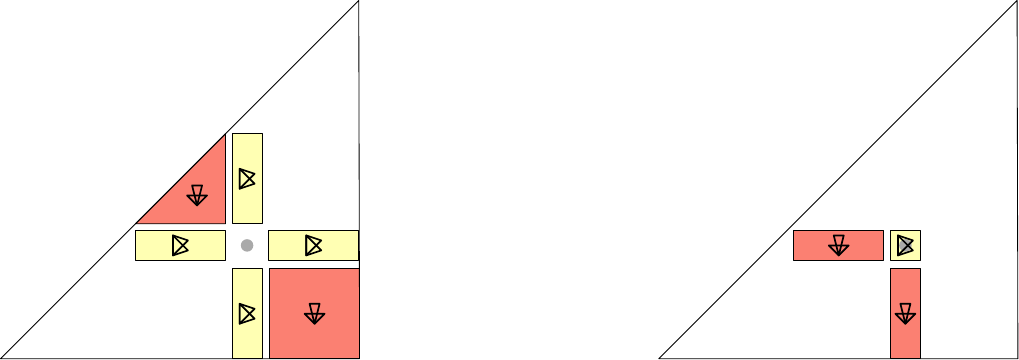}}
Determining the asymptotics of $\ex(n,\{\taco,\nested\})$ was given
explicitly as an open problem in the conclusion of Bra\ss's paper.
We spent more than a year working on this problem and this work included
computer searches for a variant of the problem played on the square grid
$\{1,\ldots,n\}^2$.\footnote{Playing on the square grid does not change
the asymptotics of the problem. Any solution for $\{1,\ldots,\lfloor n/2\rfloor-1\}^2$ can
be used as a solution for the triangular grid $Q$ and any upper bound for
$\{1,\ldots,n\}^2$ is also an upper bound for the triangular grid $Q$.}
Using the results of these computer searches in the Online Encyclopedia of
Integer Sequences \cite{oeis:a070214}, we discovered that this problem,
when played on the square grid, is equivalent to several other known
problems. See \figref{tripods}.

\begin{figure}
  \centering{
    \includegraphics{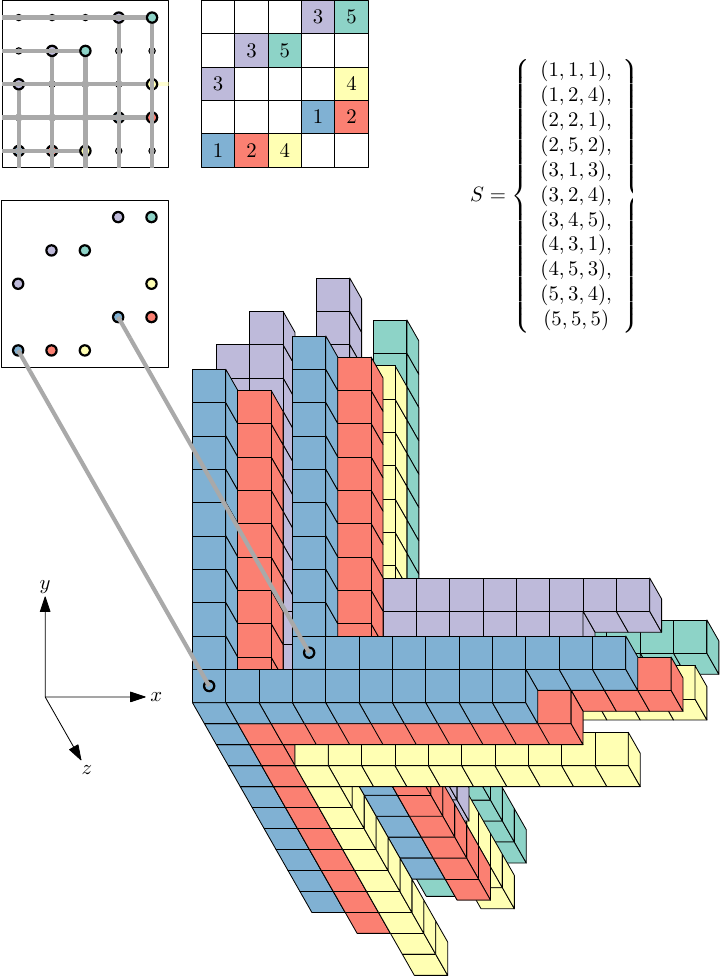}
  }
  \caption{The dot puzzle induced by excluding the taco and nested
       configurations has already been studied under several equivalent
       formulations.}
  \figlabel{tripods}
\end{figure}

\begin{enumerate}
  \item \emph{Monotone matrix problem}: How many values from
  $\{1,\ldots,n\}$ can one write in an $n\times n$ matrix, so that each
  row is increasing from left-to-right, each column is increasing from
  bottom-to-top, and for each $i\in\{1,\ldots,n\}$, the positions of $i$
  in the matrix form an increasing sequence?

  \item \emph{Tripod packing problem}: A \emph{tripod} with top $p=(x,y,z)\in\R^3$
  is the union of three closed rays originating at $p$ and directed in
  the positive x-, y-, and z-directions, i.e.,
  \[ \tripod(x,y,z) = \bigcup_{0\le t<\infty}\{ (x+t,y,z),(x,y+t,z),(x,y,z+t)\}
      \borisspace . \]
  How many disjoint tripods can be packed with tops
  in $\{1,\ldots,n\}^3$?

  \item \emph{2-comparable sets of triples problem}.  Two triples of
  integers $(a_1,a_2,a_3)$ and $(b_1,b_2,b_3)$ are \emph{2-comparable}
  if $a_i< b_i$ for at least two values of $i\in\{1,2,3\}$ or $a_i > b_i$
  for at least two values of $i\in\{1,2,3\}$.  What is the largest set,
  $S$, of pairwise 2-comparable triples one can make whose entries come
  from $\{1,\ldots,n\}$?
\end{enumerate}

Several simple and natural constructions give lower bounds
of $\Omega(n^{3/2})$ for these problems.  However,
this bound is not tight. A sequence of 
recursive constructions has steadily raised this lower bound
\cite{gowers.long:length,stein:combinatorial,stein:packing,stein.szabo:algebra,tiskin:packing}.
The current record is held by Gowers and Long \cite{gowers.long:length},
who describe a construction of size $\Omega(n^{1.546})$.

\begin{thm}[Gowers and Long \cite{gowers.long:length}]\thmlabel{taco-nested-lb}
  $\ex'(n,\{\taco,\nested\}) \in \Omega(n^{1.546})$.
\end{thm}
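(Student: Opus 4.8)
The plan is to recognize the $\{\taco,\nested\}$ dot puzzle, played on the square grid, as literally the same combinatorial object as the monotone matrix / tripod packing / $2$-comparable triples problem described in \secref{tripods}, and then to transport the Gowers--Long construction \cite{gowers.long:length} through this identification. First I would invoke the reduction noted in \secref{tripods}: since the square grid $\{1,\ldots,\lfloor n/2\rfloor-1\}^2$ embeds into $Q$, it suffices to produce, for some $m=\Theta(n)$, a legal play of the dot puzzle on an $m\times m$ square grid using at most $n$ rounds that plays $\Omega(n^{1.546})$ points in total; monotonicity of $\ex'$ then upgrades the resulting bound $\ex'(2n,\{\taco,\nested\})\in\Omega(n^{1.546})$ to the statement of the theorem.

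Next I would write out the dictionary explicitly. Reading the rules off \figref{forbidden-colour} for the two configurations in $\{\taco,\nested\}$: within a round, $\nested$ forces $Q_i$ to be non-decreasing while $\taco$ forbids two points of $Q_i$ from sharing a row or a column, so $Q_i$ is increasing; across rounds, $\nested$ forbids a later round from placing a point directly to the left of, or directly below, an already-played point, and $\taco$ forbids replaying a point exactly. Sending a point $p$ played in round $z$ to the tripod with top $(\x(p),\y(p),z)$ (up to a fixed choice of axis reflections), these constraints say precisely that the tripods are pairwise interior-disjoint; equivalently, under $p\mapsto(\x(p),\y(p),z)$ they say the triples are pairwise $2$-comparable; equivalently, reading $z$ as the value written in cell $(\x(p),\y(p))$, they say each row is increasing left-to-right, each column is increasing bottom-to-top, and the cells carrying any fixed value form an increasing sequence --- the monotone matrix conditions. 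I would include the short check that this is a bijection between legal plays and these objects (the equivalence recorded in \cite{oeis:a070214}); for the present theorem only the easy direction --- any such family yields a legal play --- is needed. The verification is the same style of case analysis over the $\le 3\times 3$ relative positions of a pair of played points that appears in the warm-ups and, e.g., in the proof of \thmref{taco-nested-david}.

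With the dictionary in place the theorem is immediate. Gowers and Long \cite{gowers.long:length} construct, for every $m$, a family of $\Omega(m^{1.546})$ pairwise interior-disjoint tripods with tops in $\{1,\ldots,m\}^3$. Taking $m=\lfloor n/4\rfloor$, embedding $\{1,\ldots,m\}^2$ into $Q$ as above and using each tripod top's third coordinate as its round, we obtain a legal play of the $\{\taco,\nested\}$ dot puzzle on $Q$ that plays $\Omega(n^{1.546})$ points, whence $\ex'(2n,\{\taco,\nested\})\in\Omega(n^{1.546})$ and therefore $\ex'(n,\{\taco,\nested\})\in\Omega(n^{1.546})$.

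The genuinely hard content --- the recursive/algebraic construction attaining the exponent $1.546$ --- lives entirely in \cite{gowers.long:length} and is used here as a black box; on our side there is no real obstacle, only the bookkeeping needed to fix the reflections and relabellings so that the (up to) eightfold symmetry relating the dot puzzle, monotone matrices, tripods, and $2$-comparable triples is matched up, together with the routine check that the across-round rules read off \figref{forbidden-colour}(b) correspond to the monotonicity conditions exactly, not merely up to a constant factor.
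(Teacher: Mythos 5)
Your proposal is correct and matches the paper's treatment: the paper states this as a cited result of Gowers and Long, justified by the surrounding discussion in \secref{tripods} establishing the equivalence of the $\{\taco,\nested\}$ dot puzzle with tripod packing / $2$-comparable triples, together with the footnoted observation that the square grid $\{1,\ldots,\lfloor n/2\rfloor-1\}^2$ embeds into the triangular grid $Q$ without changing the asymptotics. You have simply made explicit the dictionary and the easy direction of the equivalence that the paper leaves implicit.
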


The only known upper bound for this problem comes from the fact that a
solution to this problem gives a solution to the Ruzsa-Szemer{\'e}di
induced matching problem \cite{ruzsa.szemeredi:triple}.  
\begin{enumerate}
  \setcounter{enumi}{3}
  \item \emph{Induced-matching problem}: What is the maximum number
  of edges in a bipartite graph $G=(A,B,E)$ with $|A|=|B|=n$ such that
  $E$ can be partitioned into $n$ \emph{induced matchings} $M_1,\ldots,M_n$?
  That is, each $M_i$ is a matching, and for any two edges $e,f\in M_i$
  there is no edge in $E$ that joins an endpoint of $e$
  to an endpoint of $f$.
\end{enumerate}
It is simple to verify that if one takes a 2-comparable set of triples
$S=\{(a_i,b_i,c_i):i\in\{1,\ldots,m\}\}$ then the bipartite graph $G=(A,B,E)$
with $A=B=\{1,\ldots,n\}$ and
  \[
     E = \{ (b_j,c_j): j\in \{1,\ldots,m\}\}
  \]
satisfies the conditions of the induced matching problem, with the
partition into matchings given by
\[
    M_i = \{ (b,c) : (i,b,c)\in S \} \borisspace . 
\]
Thus, any upper bound for the induced-matching problem is also an upper bound
on the size of a 2-comparable set of triples.

Known upper bounds for the induced matching problem are barely
subquadratic, with the current record being held by Fox's improved
version of the triangle removal lemma \cite{fox:new}, which gives an
upper bound of $n^{2}/e^{\Omega(\log^* n)}$.  See the discussion, for
example, in Gowers and Long \cite{gowers.long:length}.  Lower bounds for
the induced matching problem are surprisingly high; a result of Behrend
\cite{behrend:on} can be used to construct $n$ vertex graphs with 
$n^2/e^{O(\sqrt{\log n})}$ edges that can be decomposed into induced matchings.

\begin{thm}[Fox \cite{fox:new}]\thmlabel{taco-nested-ub}
  $\ex'(n,\{\taco,\nested\}) \in n^2/e^{\Omega(\log^* n)}$.
\end{thm}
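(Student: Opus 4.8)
The plan is simply to assemble the chain of reductions sketched in the paragraphs above; the real analytic work is outsourced to Fox's theorem. Since the statement is about $\ex'$, I argue directly with the dot puzzle. Recall that any set of triangles realizing $\ex'(2n,\{\taco,\nested\})$ corresponds to a legal play $Q_1,\ldots,Q_n$ of the $\{\taco,\nested\}$ dot puzzle on the triangular grid $Q$, with $\sum_{i=1}^n|Q_i|=\ex'(2n,\{\taco,\nested\})$, and that $Q\subseteq\{1,\ldots,n\}^2$. So it suffices to bound $\sum_{i=1}^n|Q_i|$ for an arbitrary legal play, and then pass from $2n$ to $n$.

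The first step is the correspondence with $2$-comparable triples. To a legal play I associate the set
\[
  S=\{(i,\x(p),\y(p)):i\in\{1,\ldots,n\},\ p\in Q_i\}\subseteq\{1,\ldots,n\}^3 .
\]
Because $\taco\in X$, no point of $Q$ is played in two rounds, so the map is injective and $|S|=\sum_{i=1}^n|Q_i|$. A short check against the two rule pictures of \figref{forbidden-colour} shows that $S$ is $2$-comparable: two triples from the same round differ in both of their last two coordinates and in the same direction, since the rules for $\taco$ and $\nested$ force each $Q_i$ to be an increasing point set; two triples from distinct rounds that differ in both point coordinates are $2$-comparable automatically (among three nonzero coordinate differences, two share a sign); and if two triples from rounds $i<j$ share one point coordinate, the subsequent-round rule for $\nested$ forces the remaining point coordinate to be larger in round $j$, while $\taco$ forbids them from sharing both point coordinates. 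Hence $\sum_{i=1}^n|Q_i|\le M(n)$, where $M(n)$ denotes the maximum size of a $2$-comparable set of triples with entries in $\{1,\ldots,n\}$ (this is the equivalence recorded via \cite{oeis:a070214}).

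The second step is the reduction to the induced-matching problem, exactly as set up above: from a $2$-comparable set $S=\{(a_j,b_j,c_j):j\in\{1,\ldots,m\}\}$ with entries in $\{1,\ldots,n\}$ one forms the bipartite graph $G=(A,B,E)$ with $A=B=\{1,\ldots,n\}$, $E=\{(b_j,c_j):j\in\{1,\ldots,m\}\}$, and edge partition $M_i=\{(b,c):(i,b,c)\in S\}$; $2$-comparability makes $j\mapsto(b_j,c_j)$ injective (so $|E|=m$), makes each $M_i$ a matching, and makes each $M_i$ induced. Thus $M(n)\le I(n)$, where $I(n)$ is the maximum number of edges in a bipartite graph on $n+n$ vertices whose edge set partitions into $n$ induced matchings. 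The third and final step is Fox's quantitative version of the triangle removal lemma \cite{fox:new} (the Ruzsa-Szemer{\'e}di $(6,3)$ phenomenon \cite{ruzsa.szemeredi:triple}), which gives $I(n)\le n^2/e^{\Omega(\log^* n)}$. Chaining $\ex'(2n,\{\taco,\nested\})\le M(n)\le I(n)\le n^2/e^{\Omega(\log^* n)}$ and absorbing the change from $2n$ to $n$ into the implied constants (using that $\ex'(\cdot,X)$ is non-decreasing) yields the theorem.

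I expect no genuine obstacle inside this argument: the difficulty is entirely packed into Fox's theorem, which we quote as a black box. The only point that demands care is the first step — verifying that the rules of the $\{\taco,\nested\}$ dot puzzle are exactly strong enough to force $2$-comparability of $S$, and in particular that it is the presence of $\taco$ that both prevents a point from being replayed (so that $|S|=\sum_i|Q_i|$) and, together with $\nested$, pins down the monotonicity of each $Q_i$. That is the step I would write out in full; everything else is bookkeeping.
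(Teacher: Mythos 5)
Your proposal is correct and follows exactly the route the paper takes: the paper presents this theorem as a consequence of the chain (dot puzzle for $\{\taco,\nested\}$) $\to$ (2-comparable triples) $\to$ (induced matchings) $\to$ (Fox's bound), citing Fox as a black box, and your write-up simply fills in the verifications (injectivity via $\taco$, monotonicity of each $Q_i$, the coordinate-sharing cases via the subsequent-round $\nested$ rule) that the paper leaves implicit. No gaps.
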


While discovering these results, we noticed that the relationships
between some of these problems have gone unnoticed.  Here we make 
a few bibliographic notes.

\begin{itemize}
  \item  Bra\ss\ \cite{brass:turan} seems to have been unaware that
  the question he posed was equivalent to tripod packing and monotone
  matrices (or, like us, had never heard of these problems).
  
  \item Tiskin \cite{tiskin:packing}, apparently unaware of the
  relation between tripod packing and induced matchings, proved an upper
  bound of $o(n^2)$ for tripod packing. His proof does not depend on any
  properties of tripod packing that are not also true for induced matchings,
  and uses the same tools
  (namely Szemeredi's Regularity Lemma) as the original upper bounds for
  the induced matching problem.

  \item Gowers and Long \cite{gowers.long:length} seem to be unaware
  that the problem on 2-comparable sets of triples was studied under
  other names.
  
  \item Gowers and Long \cite{gowers.long:length} arrived at 2-comparable
  sets as a relaxation of a problem (the size of the largest 2-increasing
  sequence of triples)  proposed by Loh \cite{loh:directed}.  In his
  discussion of this problem, Loh formulates a restricted version of the
  induced matching problem, in which the matching must satisfy a certain
  $\Sigma$-free property and expresses hope \cite[remark on page~9]{loh:directed}
  that this restricted version has an $O(n^{3/2})$ upper bound.  However,
  solutions for tripod packing correspond to $\Sigma$-free induced
  matchings, so $\Sigma$-free induced matchings of size $\omega(n^{3/2})$
  are already known.
\end{itemize}

In our context, the only new observation we have pertains to
$\ex'(n,\{\taco,\nested,\bat,\ears\})$:\\
\centerline{\includegraphics[height=3cm]{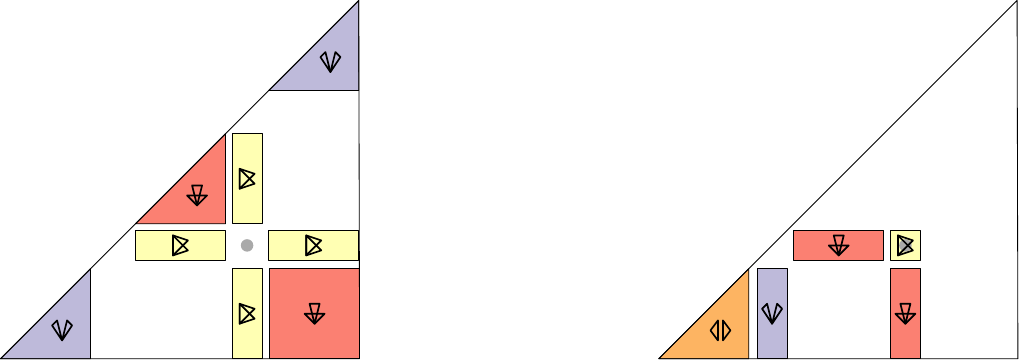}}
We observe that $\ex'(n,\{\taco,\nested\})\in
O(\ex'(n,\{\taco,\nested,\bat,\ears\})$ so these two functions
therefore have the same asymptotic growth.  This comes from the
fact that a solution for the dot puzzle of size $n$ resulting from
$X=\{\taco,\nested\}$ can be used as a solution for the dot puzzle of size
$2(n+1)$ resulting from $X=\{\taco,\nested,\bat,\ears\}$ by only playing
in the lower-right quadrant.  When played this way, the extra restrictions
caused by $\bat$ and $\ears$ do not affect the lower-right quadrant.

\begin{thm}
   $\ex'(n,\{\taco,\nested\}) \in \Theta(\ex'(n,\{\taco,\nested,\bat,\ears\})$.
\end{thm}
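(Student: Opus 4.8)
The statement to prove is
$\ex'(n,\{\taco,\nested\}) \in \Theta(\ex'(n,\{\taco,\nested,\bat,\ears\}))$.
One direction is immediate from monotonicity: since $\{\taco,\nested\}\subseteq\{\taco,\nested,\bat,\ears\}$, every solution to the more restrictive dot puzzle is also a solution to the less restrictive one, so $\ex'(n,\{\taco,\nested,\bat,\ears\}) \le \ex'(n,\{\taco,\nested\})$. All the work is in the reverse inequality, $\ex'(n,\{\taco,\nested\}) \in O(\ex'(n,\{\taco,\nested,\bat,\ears\}))$, and as the paragraph preceding the statement indicates, this is proved by an embedding argument: a good solution to the $\{\taco,\nested\}$-puzzle on a board of one size yields an equally good solution to the $\{\taco,\nested,\bat,\ears\}$-puzzle on a board of roughly twice the size, in which all the played points are confined to one quadrant where the extra $\bat$ and $\ears$ constraints are vacuous.

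\textbf{Key steps.} First I would fix notation: take an optimal solution $Q_1,\dots,Q_n$ to the dot puzzle for $X=\{\taco,\nested\}$ on the triangular board $Q$ of parameter $n$, so that $\sum_{i=1}^n |Q_i| = \ex'(2n,\{\taco,\nested\})$ (matching the indexing convention in the excerpt, where the $n$-round puzzle corresponds to a $2n$-gon). Second, I would set up the target board: the dot puzzle for $X'=\{\taco,\nested,\bat,\ears\}$ of parameter $2(n+1)$, and identify within its $\binom{2(n+1)}{2}$-point board a copy of the original triangular board $Q$ sitting in the "lower-right quadrant" — concretely, an order-preserving (in both coordinates) injection $\phi$ from the points of $Q$ into points $(x,y)$ of the large board with $x$ large and $y$ small, so that no two images of distinct rows or columns of $Q$ collide and the diagonal structure is preserved. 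Third, I would define the new solution by $Q_i' = \phi(Q_i)$ and verify it is legal for $X'$. The $\taco$ and $\nested$ rules are preserved because $\phi$ is order-preserving in both coordinates and injective on rows and columns, so the geometric relations of \figref{forbidden-colour} that the original play respected are carried over verbatim. The crucial point is that the additional killed regions from $\bat$ and $\ears$ (again read off from \figref{forbidden-colour}) all lie \emph{outside} the image quadrant — this is exactly why we spent the factor of $2$: the $\bat$ and $\ears$ restrictions forbid points that are up-and-to-the-left, or straddling, relative to a played point, and by placing the whole configuration in the far lower-right, every such forbidden point has no preimage. Hence $Q_1',\dots,Q_{2(n+1)}'$ (padding with empty rounds for the indices not in the image) is a legal $X'$-solution of the same total size, giving $\ex'(2(n+1),\{\taco,\nested,\bat,\ears\}) \ge \ex'(2n,\{\taco,\nested\})$. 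Fourth, I would absorb the $O(1)$ slack in the board size: since $\ex'(\cdot,X')$ is (at least up to constants, and monotone) well-behaved, replacing $2(n+1)$ by $n$ costs only a constant factor, and combined with the trivial direction this yields the claimed $\Theta$.

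\textbf{Main obstacle.} The argument is not hard conceptually, but the one genuinely delicate step is making the quadrant-embedding claim precise: one must pin down exactly which region of the large board the image of $Q$ occupies and then check, against the rules-pictures in \figref{forbidden-colour}(a) and (b), that \emph{all} of the incremental $\bat$- and $\ears$-killed cells fall strictly outside that region — including the "subsequent rounds" restrictions, not just the same-round ones, and including edge cases on the boundary diagonal of $Q$. This is a finite case check over the eight directions in which the $\bat$ and $\ears$ rules kill points, and the content of the proof is precisely recording that each of those directions points away from the lower-right quadrant. Once that verification is in hand, the rest is bookkeeping with the size parameter.

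\begin{proof}
  \centerline{\includegraphics[height=3cm]{figs/helper-14}}
  The inequality $\ex'(n,\{\taco,\nested,\bat,\ears\})\le\ex'(n,\{\taco,\nested\})$
  is immediate since $\{\taco,\nested\}\subseteq\{\taco,\nested,\bat,\ears\}$.
  For the reverse direction, take an optimal solution $Q_1,\dots,Q_m$ to the
  dot puzzle for $X=\{\taco,\nested\}$ on the board $Q$ of parameter $m$, so
  that $\sum_{i=1}^m|Q_i|=\ex'(2m,\{\taco,\nested\})$.  Embed the board $Q$
  into the lower-right quadrant of the board for $X'=\{\taco,\nested,\bat,\ears\}$
  of parameter $2(m+1)$ by an injection $\phi$ that strictly increases both
  coordinates and maps distinct rows (columns) of $Q$ to distinct rows (columns).
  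Setting $Q_i'=\phi(Q_i)$ (and leaving the remaining rounds empty), the
  $\taco$ and $\nested$ rules remain satisfied because $\phi$ preserves the
  relevant geometric relations of \figref{forbidden-colour}.  Moreover, reading
  \figref{forbidden-colour}(a) and (b), every cell newly killed by the $\bat$
  or $\ears$ rule of a point $\phi(q)$ lies up-and-to-the-left of, or straddling,
  $\phi(q)$, hence outside the image quadrant and thus not in the image of $\phi$.
  Therefore $Q_1',\dots,Q_{2(m+1)}'$ is a legal solution for $X'$ with the same
  total size, so $\ex'(2(m+1),\{\taco,\nested,\bat,\ears\})\ge\ex'(2m,\{\taco,\nested\})$.
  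Since $\ex'(\cdot,X')$ is monotone and the board-size change is by a constant
  factor, this gives $\ex'(n,\{\taco,\nested\})\in O(\ex'(n,\{\taco,\nested,\bat,\ears\}))$,
  and combined with the first inequality, $\ex'(n,\{\taco,\nested\})\in\Theta(\ex'(n,\{\taco,\nested,\bat,\ears\}))$.
\end{proof}
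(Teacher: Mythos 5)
Your proposal is correct and follows essentially the same route as the paper: the easy direction by monotonicity of $\ex'$ under taking supersets of forbidden configurations, and the hard direction by embedding a $\{\taco,\nested\}$-solution into the lower-right quadrant of a board of parameter $2(n+1)$, where the extra $\bat$ and $\ears$ restrictions are vacuous. The paper states this only as a brief observation, and your write-up simply fills in the same bookkeeping more explicitly.
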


\subsection{Lower Bounds}
\seclabel{lower-bounds}

Finally, we finish up with some $\Omega(n^2)$ lower bound constructions.
In each case, a matching upper bound follows from one of the results in
Bra\ss\ \cite{brass:turan}.  The following are essentially ``proofs by figure'' in which a brief description of the solution is illustrated alongside the rules of each dot puzzle. In order to avoid floors and ceilings, we assume $n$ is even.

\begin{thm}\thmlabel{taco-david-crossing-bat-ears}\label{dilwad}
$\ex(n,\{\taco,\david,\crossing,\bat,\ears\})\in\Theta(n^2)$.
\end{thm}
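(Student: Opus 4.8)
The upper bound $O(n^2)$ is inherited from a previous result of Bra\ss{} (for instance, $\ex(n,\{\nested\})\in O(n^2)$ or any single-configuration quadratic bound containing this set is not directly applicable, but Bra\ss's systematic study gives $\ex(n,X)\in O(n^2)$ for an $X$ contained in $\{\taco,\david,\crossing,\bat,\ears\}$; in any case the paper asserts a matching upper bound from Bra\ss{} \cite{brass:turan}). So the entire content of the theorem is the lower bound $\ex(n,\{\taco,\david,\crossing,\bat,\ears\})\in\Omega(n^2)$, which, by the top/bottom correspondence, reduces to exhibiting a dot-puzzle solution with $\sum_{i=1}^n|Q_i|\in\Omega(n^2)$ that avoids all the killed regions associated with $\taco$, $\david$, $\crossing$, $\bat$, and $\ears$. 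The first step is therefore to draw the combined rule picture (the "proof by figure" the paper promises): superimpose the single-round restrictions of \figref{forbidden-colour}(a) and the subsequent-round restrictions of \figref{forbidden-colour}(b) for these five configurations around a played point $p=(x,y)$, and read off exactly which cells of $Q$ remain available.

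First I would identify a single round $Q_i$ that is allowed to be large. The configurations $\bat$ and $\ears$ (both "disjoint/single-vertex in disjoint position") typically restrict points placed in later rounds but impose little within a round, while $\taco$ forbids two points in the same row or column within a round, $\crossing$ forbids certain staircase patterns, and $\nested$—wait, $\nested$ is \emph{not} forbidden here. Since $\nested$ is absent, a single round may contain a decreasing (or otherwise large) configuration of points. The plan is to use essentially one enormous round: choose $Q_1$ to be a decreasing antichain (or the whole anti-diagonal region) of $Q$, of size $\Omega(n^2)$, arranged so that no two of its points create a $\taco$ ($\taco$ needs shared row or column—so I must take at most one point per row and per column, giving only $\Omega(n)$ from one round that way). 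So a single round cannot alone give $n^2$. Instead I would use $\Omega(n)$ rounds, each contributing $\Omega(n)$ points, with the rounds arranged so that the subsequent-round killing from $\bat$, $\ears$, $\crossing$, $\david$ does not wipe out later rounds. The natural construction: in round $i$ play a full "column block" or "diagonal staircase" of $\Omega(n)$ points that is pushed far enough (in the appropriate corner direction) that the regions it kills in subsequent rounds lie entirely outside the region reserved for rounds $i+1,\dots,n$. Concretely, partition $Q$ into $\Omega(n)$ horizontal (or vertical) strips, play strip $i$ during round $i$ in top-to-bottom (or left-to-right) order, and check from the combined rule picture that the kill-regions of $\taco,\david,\crossing,\bat,\ears$ emanating from a played strip all point "away" from the not-yet-used strips.

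The key steps, in order: (1) assemble the combined dot-puzzle rule diagram for $X=\{\taco,\david,\crossing,\bat,\ears\}$ and identify the "safe direction"; (2) describe the explicit construction—a partition of the $\binom{2n}{2}$-point triangular grid $Q$ (equivalently $\{1,\dots,n\}^2$, since by the footnote in \secref{tripods} the square grid has the same asymptotics) into $\Theta(n)$ rounds, each an increasing/decreasing $\Theta(n)$-point set chosen to respect the within-round rules ($\taco$ and $\crossing$); (3) verify that, for each round $i$, every point played in round $j>i$ lies outside $\killed(X, Q_i)$—this is where one checks that the $\bat$, $\ears$, $\david$, and $\crossing$ kill-regions do not intersect the later rounds; (4) conclude $\sum_i|Q_i|\in\Omega(n^2)$, hence $\ex'(2n,X)\in\Omega(n^2)$, and finally $\ex(n,X)\in\Omega(n^2)$ (trivially, since $\ex\ge\ex'$), matching the $O(n^2)$ upper bound from Bra\ss{}.

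The main obstacle is step (3): finding a round/strip orientation for which all five forbidden configurations' subsequent-round kill-regions simultaneously point away from the unused portion of $Q$. Because $\crossing$ and $\david$ kill in somewhat "opposite" directions from $\bat$ and $\ears$, it is not obvious a single linear ordering of strips works; the fix is likely to interleave the geometry—e.g. use diagonal strips rather than axis-aligned ones, or play each round as a short increasing run placed in a distinct diagonal band so that the combined killed region of a round is contained in one quadrant relative to its own location and that quadrant contains no future round. Getting this containment exactly right against all five rules is the delicate part, but once the picture is drawn it should reduce to a routine case check, which is precisely why the paper presents it as a proof by figure.
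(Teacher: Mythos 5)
Your high-level strategy --- $\Theta(n)$ rounds, each a $\Theta(n)$-point monotone set, ordered so that the kill regions of earlier rounds miss all later rounds --- is the same as the paper's, but the proposal never commits to a construction and, more importantly, takes a wrong turn that would sink its most concrete instantiations. You claim that $\bat$ and $\ears$ ``impose little within a round'' and you suggest playing each round as an \emph{increasing} run. Within a single round all triangles share the bottom vertex, so any two points of $Q_i$ form a shared-vertex or shared-edge configuration: a strictly increasing pair yields $\bat$ (disjoint top intervals), $\crossing$ (overlapping top intervals), or a shared edge (touching top intervals), while a pair sharing a row or column yields $\taco$, and a strictly decreasing pair yields $\nested$. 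Since $\taco$, $\bat$, and $\crossing$ all lie in $X$ and only $\nested$ is missing, each round is \emph{forced} to be a strictly decreasing set; increasing runs are exactly what you may not play. This is the single degree of freedom the problem leaves open, and the proposal never locks onto it.

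The paper's proof does exactly this: for $i=1,\ldots,n/2$ it takes $Q_i$ to be the points of the quadrant $\{n/2,\ldots,n\}^2$ on the anti-diagonal $x+y=3n/2-i+1$, so every round is a decreasing set (only $\nested$ occurs within a round) of total size $\Theta(n^2)$ over all rounds, and the diagonals sweep inward through a single quadrant so that the cross-round kill regions of $\taco$, $\david$, $\crossing$, $\bat$, and $\ears$ all avoid the diagonals not yet played. That last containment check is precisely your step (3), which you explicitly leave unresolved (``it is not obvious a single linear ordering of strips works''); so even after correcting the within-round analysis, what remains is a plan rather than a proof. The upper bound, as you suspected, is free: $\crossing\in X$ and Bra\ss{} gives $\ex(n,\{\crossing\})\in O(n^2)$, so $\ex(n,X)\in O(n^2)$ by monotonicity under supersets.
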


\begin{proof}
\centerline{
   \includegraphics[height=3cm]{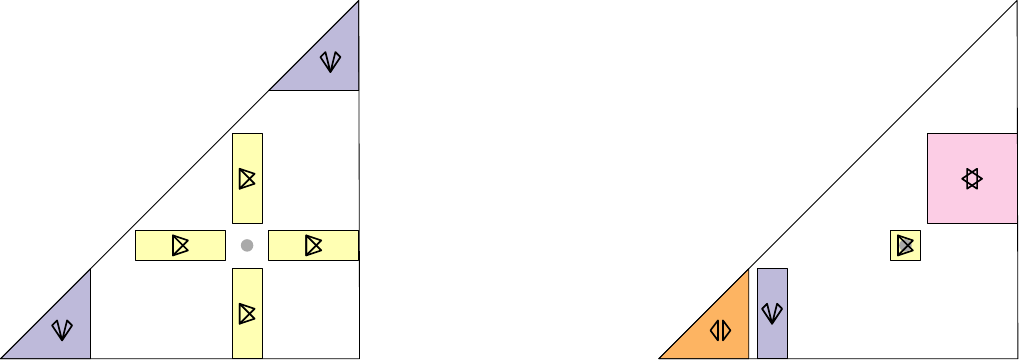}
   \hspace{2cm}\includegraphics[height=3cm]{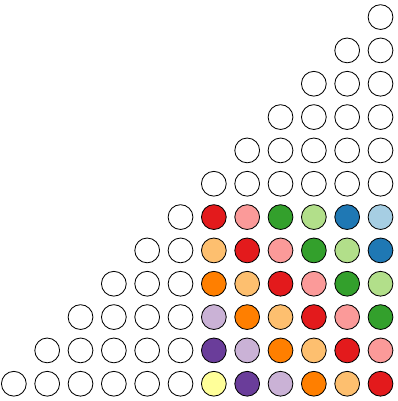}
}
For each $i\in\{1,\ldots,n/2\}$, we take $Q_i$ to be all the points of
$Q'=\{n/2,\ldots,n\}^2$ on the line $\{(x,y):y=3n/2-x-i+1\}$.
\end{proof}

\begin{thm}\thmlabel{swords-bat-ears-david}
  $\ex(n,\{\swords,\bat,\ears,\david\}) \in \Theta(n^2)$.
\end{thm}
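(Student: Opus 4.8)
The plan is to prove the two bounds separately. The upper bound $\ex(n,\{\swords,\bat,\ears,\david\})\in O(n^2)$ needs no new work: since $\{\swords\}\subseteq\{\swords,\bat,\ears,\david\}$, we have $\ex(n,\{\swords,\bat,\ears,\david\})\le\ex(n,\{\swords\})$, and $\ex(n,\{\swords\})\in O(n^2)$ is among Bra\ss's bounds \cite{brass:turan}, as recorded in \tabref{smalltable}. So everything reduces to a matching $\Omega(n^2)$ construction, which I would give directly (and note that it is equivalently a one-round dot puzzle in the sense of \secref{new-results}).

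For the construction, label the convex $n$-gon $v_1,\dots,v_n$ in convex-position order, put $c=\lceil n/2\rceil$, and let $S$ be the set of all triangles $(v_1,v_i,v_j)$ with $2\le i<j\le n$ and $i\le c\le j$; that is, every triangle through $v_1$ whose opposite edge $v_iv_j$ cuts off an arc of $v_2,\dots,v_n$ that straddles $v_c$. A one-line count gives $|S|=(c-1)(n-c+1)-1\in\Omega(n^2)$. In dot-puzzle terms this is the trivial solution that plays a single round, filling the ``lower-right'' region $\{(x,y):y\le c\le x\}$, and plays nothing afterward.

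The verification that $S$ avoids all four configurations splits into three easy observations. First, every triangle of $S$ uses $v_1$, so no two triangles of $S$ are vertex-disjoint; this rules out $\ears$, $\swords$, and $\david$ outright, since each of those is, by definition, a configuration of two triangles sharing no vertex. Second, two triangles of $S$ that share a vertex other than $v_1$ share two vertices, hence an edge, so they form a $\taco$ or $\mariposa$, never a $\bat$. Third, if $(v_1,v_i,v_j)$ and $(v_1,v_k,v_l)$ in $S$ meet only at $v_1$, then they form a $\bat$ exactly when the arcs $[i,j]$ and $[k,l]$ along $v_2,\dots,v_n$ are disjoint (otherwise they nest or cross, i.e.\ give $\nested$ or $\crossing$); but both arcs contain $v_c$, so no $\bat$ occurs. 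Putting these together yields $\ex(n,\{\swords,\bat,\ears,\david\})\ge|S|\in\Omega(n^2)$, and combined with the upper bound we get $\Theta(n^2)$.

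The only step that needs real care --- and the one I would expect to be the main obstacle --- is the third: verifying against \figref{forbidden-colour} that, among triangles sharing exactly the apex $v_1$, the $\bat$ configuration is precisely the ``disjoint opposite arcs'' case and not $\nested$ or $\crossing$. This is a short but genuine case check; once it is pinned down, the rest is counting together with the trivial remark that a common vertex forbids the three vertex-disjoint configurations.
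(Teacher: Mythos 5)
Your proof is correct and is essentially the paper's own argument: the paper takes the $O(n^2)$ upper bound from Bra\ss\ and gives the same lower-bound construction as a single dot-puzzle round $Q_1=\{(x,y)\in Q: x>n/2,\ y<n/2\}$ (all remaining rounds empty), which is exactly your family of triangles through a common apex whose opposite edges all straddle a fixed vertex. The only difference is presentational: you verify directly in the polygon that a common vertex kills $\ears$, $\swords$, $\david$ and that straddling arcs kill $\bat$, whereas the paper leaves this check to its ``proof by figure.''
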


\begin{proof}
\centerline{
   \includegraphics[height=3cm]{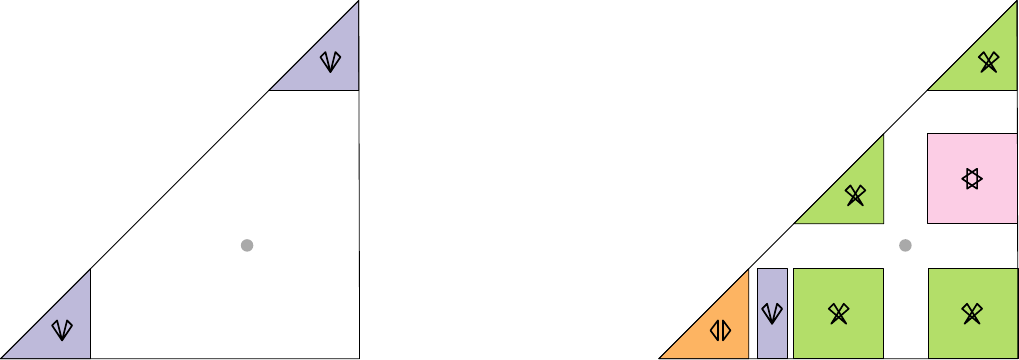}
   \hspace{2cm}\includegraphics[height=3cm]{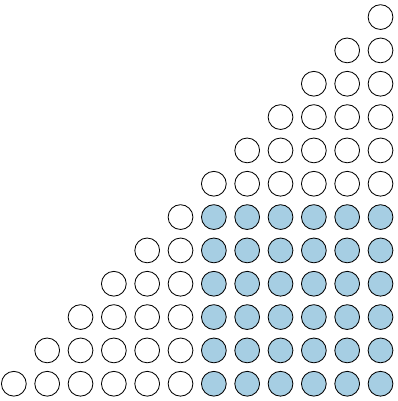}
}
   We take $Q_1=\{(x,y)\in Q: x>n/2, y<n/2\}$ and
   set $Q_2=Q_3=\cdots=Q_n=\emptyset$.
\end{proof}

\begin{thm}\thmlabel{david-nested-crossing}
  $\ex(n,\{\david,\nested,\crossing\}) \in \Theta(n^2)$.
\end{thm}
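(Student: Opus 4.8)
The plan is to establish $\ex(n,\{\david,\nested,\crossing\})\in O(n^2)$ and $\ex(n,\{\david,\nested,\crossing\})\in\Omega(n^2)$ separately. The upper bound is free: since $\{\david,\nested,\crossing\}\supseteq\{\nested\}$, monotonicity of $\ex(n,\cdot)$ under the subset relation gives $\ex(n,\{\david,\nested,\crossing\})\le\ex(n,\{\nested\})\in O(n^2)$, the latter being Bra\ss's bound recalled in \secref{points-of-view}. Everything else is the matching lower bound.

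For the lower bound I would exhibit an explicit legal play in the dot puzzle view (on a convex $2n$-gon) using $\Omega(n^2)$ points, so that $\ex'(2n,\{\david,\nested,\crossing\})\in\Omega(n^2)$ and hence $\ex(n,\{\david,\nested,\crossing\})\ge\ex'(n,\{\david,\nested,\crossing\})\in\Omega(n^2)$. The play is the same in every round: for each $i\in\{1,\dots,n\}$ take $Q_i=\{(y+1,y)\in Q : y \text{ is odd}\}$, the $\lfloor n/2\rfloor$ points immediately above the main diagonal whose $y$-coordinate is odd, giving $\sum_{i=1}^n|Q_i|=n\lfloor n/2\rfloor$. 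Recall that $(y+1,y)$ played in round $i$ represents the triangle on the two consecutive top vertices $v_y,v_{y+1}$ and the bottom vertex $B_i$. I would then verify, by a short case analysis, that no two of the resulting triangles form a forbidden configuration: two triangles on the same pair $\{v_y,v_{y+1}\}$ share that (polygon) edge, hence form \taco\ (or \mariposa); two triangles on distinct odd-indexed pairs and the same bottom vertex have disjoint chords meeting only at their common apex, hence form \bat; and two triangles on distinct pairs with distinct bottom vertices are vertex-disjoint, so the only threat is \david, which is impossible because the two vertices of each pair are consecutive on the polygon, leaving no slot for a vertex of the other triangle to separate them---the six vertices cannot alternate around the $2n$-gon, so the configuration is \ears\ or \swords. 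In every case the pair avoids $\{\david,\nested,\crossing\}$.

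The delicate point---and the reason the construction uses only the odd-indexed, hence pairwise disjoint, pairs rather than all $n-1$ points of the superdiagonal---is the cross-round interaction of \emph{overlapping} consecutive pairs: the triangles $\{v_y,v_{y+1},B_i\}$ and $\{v_{y+1},v_{y+2},B_{i'}\}$ share exactly the vertex $v_{y+1}$, and when $i<i'$ their chords $\{v_y,B_i\}$ and $\{v_{y+2},B_{i'}\}$ interleave around $v_{y+1}$, producing a \crossing. Forcing the pairs to be disjoint eliminates this case outright, and with it the only obstacle to the case analysis; the remaining checks just re-read the killed-region rules pictured for this puzzle. Combining the two bounds gives $\ex(n,\{\david,\nested,\crossing\})\in\Theta(n^2)$.
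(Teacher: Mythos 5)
Your construction is exactly the paper's: the paper takes $Q_i=\{(2j,2j-1): j\in\{1,\ldots,\lfloor n/2\rfloor\}\}$ in every round (every second point on the superdiagonal), which coincides with your odd-$y$ superdiagonal points, and the upper bound is likewise inherited from Bra\ss's $\ex(n,\{\nested\})\in O(n^2)$. Your case analysis correctly fills in the verification that the paper leaves as a proof by figure, so this is the same approach, carried out correctly.
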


\begin{proof}
   \centerline{\includegraphics[height=3cm]{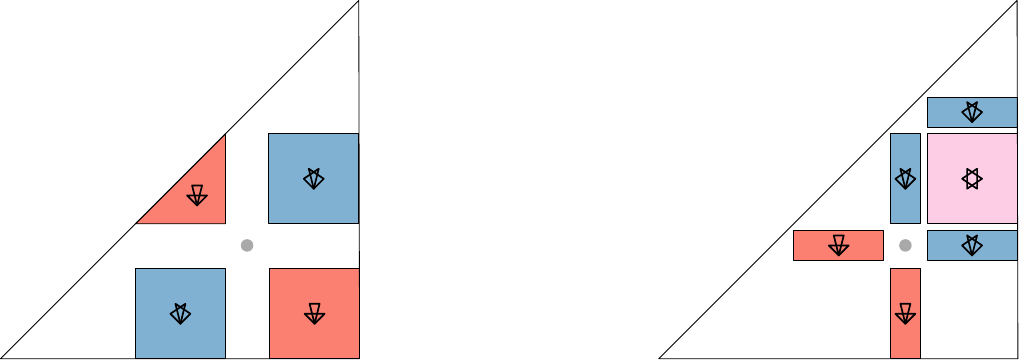}
   \hspace{2cm}\includegraphics[height=3cm]{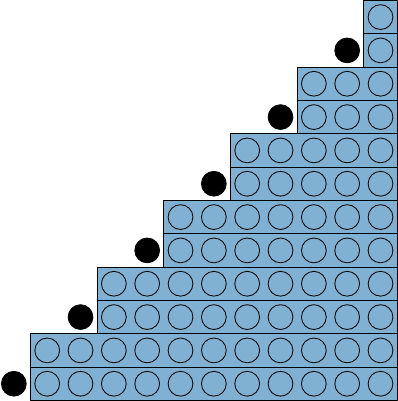}}
   We repeatedly take every second point on the diagonal $y=x-1$, i.e.,
   for each $i\in\{1,\ldots,n\}$, $Q_i=\{(2j,2j-1): j\in\{1,\ldots,n/2\}\}$.
\end{proof}

%
%

\begin{thm}\thmlabel{bat-nested-ears}
  $\ex(n,\{\bat,\nested,\ears\}) \in \Theta(n^2)$.
\end{thm}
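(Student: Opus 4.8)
The plan is to prove the two bounds separately; the upper bound is already known, and only the lower bound is new. For the upper bound, observe that $\{\bat,\nested,\ears\}\supseteq\{\nested\}$, so inheritance through the subset relation gives $\ex(n,\{\bat,\nested,\ears\})\le\ex(n,\{\nested\})\in O(n^2)$, the last bound being Bra\ss's (and recovered in the warm-up exercises via the dot puzzle). It therefore suffices to exhibit an $\Omega(n^2)$ construction.

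For the lower bound I would work in the dot puzzle view, and the construction I propose is the simplest conceivable one: \emph{in round $i$, play the entire $i$-th row of $Q$}, that is, $Q_i=\{(x,i):x\in\{i+1,\dots,n\}\}$. Since $\sum_{i=1}^{n-1}|Q_i|=\sum_{i=1}^{n-1}(n-i)=\binom{n}{2}$ and each point is played in exactly one round, this certifies $\ex'(2n,\{\bat,\nested,\ears\})\ge\binom{n}{2}$, hence $\ex'(n,\{\bat,\nested,\ears\})\in\Omega(n^2)$, and since $\ex(n,X)\ge\ex'(n,X)$ we would be done.

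The substance of the argument is checking that this play respects the rules of the dot puzzle for $X=\{\bat,\nested,\ears\}$, which I would read off from \figref{forbidden-colour}. Within a round, $\nested$ only forbids a decreasing pair of points and $\bat$ only forbids two points whose ``intervals'' $[y,x]$ are disjoint; since each $Q_i$ lies in a single row it is non-decreasing and all of its intervals contain $i$, so both in-round rules are vacuous. Across rounds, $\nested$ forbids playing directly to the left of, or directly below, a previously played point, while $\bat$ and $\ears$ together forbid, once a point with $y$-coordinate $c$ has been played, any later point in a column $c'\le c$. In our play each row is used in exactly one round, so ``directly left'' never applies; within a fixed column $x$ the points appear, in round order, as $(x,1),(x,2),\dots,(x,x-1)$, whose $y$-coordinates strictly increase, so ``directly below'' is respected; and round $i$ uses only the $y$-coordinate $i$ and only the columns $\{i+1,\dots,n\}$, so no later round ever enters a column $\le i$, which is exactly the combined $\bat$/$\ears$ restriction. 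Equivalently, one can verify everything directly in the $2n$-gon: round $i$ produces the fan of triangles $(t_i,t_x,b_i)$, $x>i$, all sharing the edge $t_ib_i$, so two triangles from the same round form a $\taco$ or a $\mariposa$; and for $i<i'$ a short case analysis on the cyclic order of the at most six vertices involved shows that a triangle from round $i$ and one from round $i'$ form a $\crossing$, a $\swords$, or a $\david$ --- never a $\bat$, a $\nested$, or an $\ears$.

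The step I expect to need the most care is fixing the orientation conventions in \figref{forbidden-colour} --- which of the two top vertices is the ``left'' one, and in which direction the rounds sweep the bottom vertices --- since the legality of the row-by-row schedule rests entirely on the asymmetry that columns get eliminated ``from the left'' (small $x$) by previously used $y$-coordinates rather than from the right; with the opposite convention one would instead need to play column-by-column (or use the reflected construction).
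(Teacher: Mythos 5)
Your proof is correct. The overall strategy matches the paper's---the upper bound is inherited from $\ex(n,\{\nested\})\in O(n^2)$ and the lower bound is an explicit play of the dot puzzle---but the construction itself is genuinely different. The paper's proof plays the \emph{same} set of about $n/2$ dots, a diagonal of the lower-right quadrant, in every one of the $n$ rounds, so its quadratic count comes entirely from multiplicity: only $O(n)$ distinct points of $Q$ are ever used, each $n$ times. You instead play every point of $Q$ exactly once, sweeping row by row, so your count is $|Q|=\binom{n}{2}$ with no repetition. Your verification is the part that carries the weight, and it checks out: within round $i$ all triangles share the edge joining the $i$th top vertex to the $i$th bottom vertex, so only \taco\ or \mariposa\ can arise; and across rounds $i<j$, the facts that every point of $Q_j$ has $x$-coordinate at least $j+1>i$ (so it avoids the columns $\le i$ killed by \bat\ and \ears) and that within a column later points lie strictly above earlier ones (so the \nested\ rule ``directly to the left or directly below'' is never triggered) leave only \crossing, \swords, and \david. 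Your reading of \figref{forbidden-colour} is consistent with the paper's textual description of those rules, and the left/right asymmetry you flag as the delicate point is indeed the one that makes the row-by-row schedule legal. A small bonus of your version is that it shows the $\{\bat,\nested,\ears\}$ puzzle admits a perfect play using \emph{all} of $Q$, which is slightly more than the repeated-diagonal play demonstrates.
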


\begin{proof}
   \centerline{\includegraphics[height=3cm]{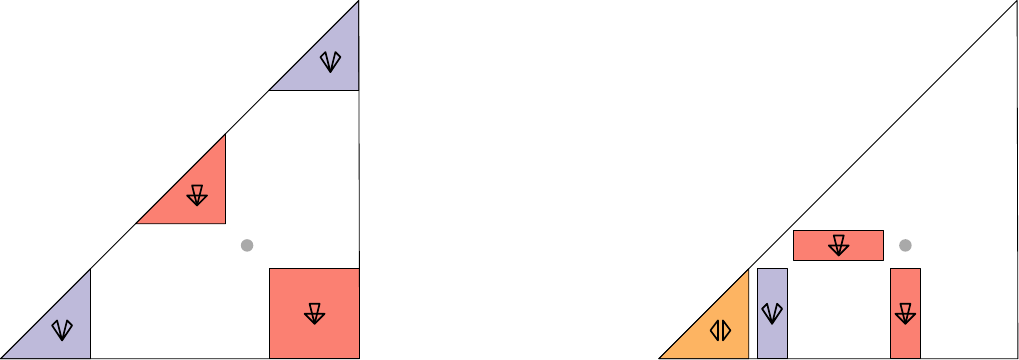}
   \hspace{2cm}\includegraphics[height=3cm]{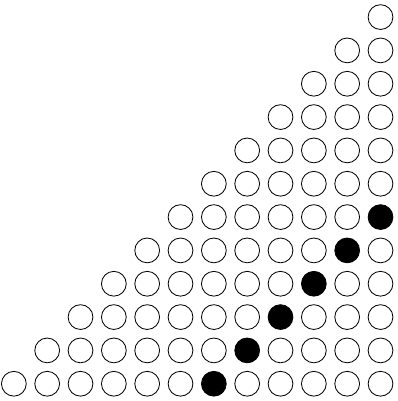}}
   We repeatedly take points on the diagonal of the lower-right quadrant,
   i.e., for each $i\in\{1,\ldots,n\}$, we take $Q_i=\{(n/2+i,i): i\in\{1,\ldots,n/2\}$.
\end{proof}

\section*{Acknowledgement}

Some of this work was carried out at the Fourth Annual Workshop on
Geometry and Graphs, held at the Bellairs Research Institute in Barbados,
March 6--11, 2016. The authors are grateful to the organizers and to
the other participants of this workshop for providing a stimulating working
environment.

\bibliographystyle{plainurl}
\bibliography{turan}

\end{document}